\theoremstyle{plain}
\newtheorem{thm}{Theorem}[section]
\newtheorem{lem}[thm]{Lemma}
\newtheorem{ass}[thm]{Assumption}
\numberwithin{equation}{section}
\theoremstyle{remark}
\newtheorem{remark}[thm]{Remark}
\theoremstyle{definition}
\newtheorem{definition}[thm]{Definition}
\newtheorem{example}[thm]{Example}
\newcommand{\N}{\mathbb{N}}
\newcommand{\R}{\mathbb{R}}
\renewcommand{\min}{\operatorname{min}}
\renewcommand{\max}{\operatorname{max}}
\newcommand{\diag}{\operatorname{diag}}
\renewcommand{\div}{\text{div}}
\newcommand{\defeq}{\mathrel{:=}}
\renewcommand{\d}{\text{d}}
\newcommand{\norm}[1]{\left\lVert #1 \right\rVert} 
\newcommand{\zorm}[3]{\left\lVert #1 \right\rVert_{L_x^{#2}L_v^{#3}}} 		
\newcommand{\dorm}[4]{\left\lVert #1 \right\rVert_{L_t^{#2}L_x^{#3}L_v^{#4}}} 
\DeclareMathOperator{\tr}{tr}
\title
{Existence of martingale solutions to a stochastic kinetic model of chemotaxis}
\author{Benjamin Gess, Sebastian Herr, Anne Niesdroy}
\date{}
\renewenvironment{abstract}
{
	\begin{minipage}{.9\textwidth}\small\textbf{Abstract}.\noindent
	}
	{
	\end{minipage}
}
\newenvironment{keywords}
{
	\begin{minipage}{.9\textwidth}\small\textbf{Keywords}:\noindent
	}
	{
	\end{minipage}
}
\newenvironment{msc}
{
	\begin{minipage}{.9\textwidth}\small\textbf{MSC 2010}:\noindent
	}
	{
	\end{minipage}
}
\begin{document}
	\maketitle	

\begin{center}
	\begin{abstract}
	We show the existence of local and global in time weak martingale solutions for a stochastic version of the	Othmer-Dunbar-Alt kinetic model of chemotaxis under suitable assumptions on the turning kernel and stochastic drift coefficients, using dispersion and stochastic Strichartz estimates. The analysis is based on new Strichartz estimates for stochastic kinetic transport. The derivation of these estimates involves a local in time dispersion analysis using properties of stochastic flows, and a time-splitting argument to extend the local in time results to arbitrary time intervals. 
	\end{abstract}	
\end{center}

\bigskip

\begin{keywords}
Stochastic dispersion and Strichartz estimates, martingale solutions, stochastic kinetic chemotaxis.
\end{keywords}

\smallskip

\begin{msc} 35R60 \end{msc}

\tableofcontents
\section{Introduction}
In this paper, we examine a stochastic extension of the Othmer-Dunbar-Alt kinetic model of chemotaxis. Chemotaxis refers to the movement of bacteria, such as E. Coli, in response to chemical gradients. This movement is characterized by alternating phases of random, directed movements ('runs') and abrupt changes in direction ('tumbles'). 
To describe the stochastic 'run', let $\{\beta^k\}_{k \in \N}$ be one-dimensional Brownian motions and $\sigma^k: \R^{2d} \rightarrow \R^d$ be divergence free vector fields. Physically, we view the quantity
\begin{align*}
	(t,x,v) \mapsto \sum_{k \in \N} \sigma^k(x,v) \circ \d \beta^k_t
\end{align*}
as a random environment in which the bacteria move, and which is colored in space and white in time.\footnote{In order to simplify notation we may frequently omit the $\omega$-dependency.} In the absence 
of chemoattractant substances all particles evolve according to the SDE
\begin{align}\label{SDE: stoch. flow_intro}
	\d X_t = V_t \d t, \quad \d V_t = \sum_{k \in \N} \sigma^k(X_t, V_t) \circ \d \beta^k_t
\end{align}
and are only distinguished from one another according to their initial location. The superposition of their stochastic 'run' with the 'tumbling' described by a turning kernel $K(S)$ leads to a stochastic kinetic equation for the distribution of cells in phase space $f(\omega, t, x, v)$ given by 
\begin{align*}
		\d f + v \nabla_x f \d t + \sum_{k} \div_v(f\sigma^k \circ \d \beta^k) &= \int_{V} K(S)f' - K^{\ast}(S)f \d v' \d t,
\end{align*}
with compact $V \subseteq \R^d$, turning kernel $K(S)$ supported in $\R \times \R^d\times V\times V$ and 
\begin{align*}
	\int_{V} K(S) f' \d v' &= \int_V K(S)(t,x,v,v') f(\omega,t, x, v') \d v'\\
	\int_{V} K^{\ast}(S) f \d v' &= \int_V K(S)(t,x,v',v) f(\omega, t, x, v) \d v'.
\end{align*}
We analyze the system given by this equation, coupled with an elliptic equation governing the concentration of the chemical attractant $S(\omega,t,x)$ given by \begin{align*}
		S - \Delta S &= \rho = \int_{\R^d} f(\omega, t, x, v) \d v.
\end{align*}
Thus, combining these equations we are interested in the local and global in time existence of solutions for the following chemotactic model in the presence of an external random force
\begin{align}
\begin{split}\label{IVP_stoch}
			\d f + v \nabla_x f \d t + \sum_{k} \div_v(f\sigma^k \circ \d \beta^k) &= \int_{V} K(S)f' - K^{\ast}(S)f \d v' \d t\\
	f(\omega, 0, x, v) &= f_0(x, v)	\\
S - \Delta S &= \rho = \int_{\R^d} f(\omega, t, x, v) \d v. 
\end{split}
\end{align}
The deterministic \textit{kinetic} model of chemotaxis derived by Othmer, Dunbar and Alt \cite{ADO1988} can be thought of as a mesoscopic analogue of the Keller-Segel model which was introduced by Keller and Segel in the 1970s (\cite{KS1970}, \cite{KS1971}). Othmer, Dunbar and Alt derive this model from a correlated random walk.
When deriving their model, they include an external forcing acting on the individuals (cf. \cite[Equation (33)]{ADO1988}). Since this forcing corresponds to the stochastic term in equation \eqref{IVP_stoch} we will refer to it as the external random force.

In this work, we prove the existence of weak martingale solutions under the following regularity assumption on the turning kernel $K$ and suitable assumptions on the stochastic drift coefficients $\sigma^k$. 

\begin{ass}\label{ass:turning_kernel}
	Let $K: L_t^{r_1}W_x^{1,p_1} \rightarrow L_t^{r_1}L_x^{p_1}L_v^{p_2}L_x^{p_3}$. For all $p_2,p_3 \in [1, \infty]$ and all bounded $V \subseteq \R^d$ there exists 	$C(|V|, p_2, p_3)$ such that for all $p_1 \geq \max(p_2,p_3)$ and all $r_1 \in [1,\infty]$ and all $S \in L_t^{r_1}W_x^{1,p_1}$ and all $t\in (0,\infty)$ we have that $K(S) \geq 0$ and satisfies
	\begin{align*}
		\left\Vert K(S) (t,x,v,v') \right\Vert_{L_x^{p_1} L_v^{p_2} L_{v'}^{p_3}}
		\le 		C(|V|, p_2, p_3) 
		\cdot 	\left(
		\Vert S(t, \cdot ) \Vert_{L^{p_1}} 
		+ \Vert \nabla S(t, \cdot ) \Vert_{L^{p_1}}
		\right).
	\end{align*}
\end{ass}

\begin{remark}
The existence of weak solutions under Assumption \ref{ass:turning_kernel} on the turning kernel $K$ in the \textit{deterministic} setting  was shown by Bournaveas, Calvez, Guit\'{e}rrez and Perthame in \cite[Theorem 3]{BCGP2008}. This assumption is weaker than assumptions used before since there is no assumption of delocalization. Existence of weak solutions under a delocalised assumption was for example shown in \cite[Theorem 1]{CMPS2004}.\\
	Delocalisation is a restrictive assumption that is not satisfied in many relevant biological settings, where turning kernels typically rely only on the turning angle $\theta(v,v') = \arccos\left(\frac{vv'}{\norm{v}\norm{v'}}\right)$, sometimes combined with a sum of nascent delta functions. For example, turning kernels of this form are derived in \cite[p.278]{ADO1988} and \cite[p. 856]{EO2007}. Precisely, this corresponds to kernels of the form
	\begin{align*}
		K_{\varepsilon}(S)(t,x,v,v') = \lambda(S)(x,t) h(\theta (v,v'))\delta_{\varepsilon}(\norm{v}-\norm{v'}), 
	\end{align*}
	where the rate $\lambda(S)$ and the function $h$ satisfy appropriate regularity conditions. 
	If $\lambda(S)$  satisfies Assumption \ref{ass:turning_kernel} but not a delocalised assumption and $h$ is bounded, then the turning kernel $K$ satisfies Assumption \ref{ass:turning_kernel} but not a delocalised assumption.
\end{remark}

Analyzing \eqref{IVP_stoch} presents three main challenges. First, proving existence results requires a-priori estimates, but, as in the deterministic setting, the regularity of the kernel is insufficient to compute the $L^1$-norm of the integral terms on the right-hand side of equation \eqref{IVP_stoch}. In the deterministic setting, Bournaveas, Calvez, Guit\'{e}rrez and Perthame \cite[Theorem 3]{BCGP2008} address this issue by deriving Strichartz estimates in mixed $L^p$-spaces. Accordingly, in the present work we overcome this difficulty by establishing novel pathwise Strichartz estimates for stochastic kinetic transport. To our knowledge, this is the first analysis of a stochastic version of the kinetic chemotaxis model using Strichartz estimates. In the context of the stochastic Boltzmann equation, Punshon-Smith and Smith \cite{PS2018} address related regularity issues. They resolve this difficulty by making use of renormalized martingale solutions. In contrast, relying on Strichartz estimates in mixed $L^p$-spaced allows us to analyze the system \eqref{IVP_stoch} directly without renormalization. 

Second, when deriving stochastic Strichartz estimates, we have to control the dispersion. Since, in contrast to the deterministic case, stochastic characteristics cannot be calculated explicitly, we have to carefully analyze properties of the stochastic flow in order to show dispersion. With the help of various results by Kunita \cite[Theorem 1.4.1, Theorem 3.4.1, Theorem 4.3.2., Theorem 4.6.5, Example on pages 106f.]{K1997} we establish local in time dispersion under regularity assumptions on the stochastic drift coefficients $\sigma^k$. Subsequently, we apply a time-splitting argument inspired by \cite{GS2015}, conservation of mass and duality arguments, to show Strichartz estimates that are valid on arbitrary time intervals. 

Third, previous works on stochastic (nonlinear) transport \cite{LPS2013b, LPS2014, GS2015} developed a purely path-by-path approach to the SPDEs, based on a transformation argument, without giving meaning to the stochastic integrals or the SPDE itself. Since we want to give a stochastic meaning to the resulting process, we ultimately integrate this pathwise technique with the concept of martingale solutions. In comparison to the deterministic setting, we have to treat the adaptedness of the solution as each weak solution is associated with its own stochastic basis.

The key assumptions on the stochastic drift are expressed in terms of the stochastic flow $\Phi_{s,t}(x,v)$ associated with the SDE \eqref{SDE: stoch. flow_intro}. To ensure that this stochastic flow exists globally, is unique, and is volume-preserving the following assumption is sufficient. 

\begin{ass}\label{ass: ex_flow}
Consider the SDE \eqref{SDE: stoch. flow_intro}. Assume that $\sigma^k \in C^1(\R^{2d})$ and that $\div_v \sigma^k = 0$ for all $k$. Furthermore, assume that the local characteristic corresponding to \eqref{SDE: stoch. flow_intro} belongs to the class $B_b^{0,1}$ (see Definition \ref{def: local_characteristic_semi} and Definition \ref{def: local_characteristic_class_B_b} for a precise definition of the local characteristic and this class).
\end{ass}

In this work, we prove that local or global in time dispersion and Strichartz estimates are valid for the stochastic kinetic transport equation if one of the following assumptions on the stochastic flow is satisfied. 

\begin{ass}\label{ass:disp_local}
	Fix $T \in (0,\infty)$. Assume that Assumption \ref{ass: ex_flow} is fulfilled and that there exists a constant $C$ and a $\mathbb{P}$-a.s. positive stopping-time $\tau(\omega)$ with $0\leq \tau(\omega) \leq T$ such that
	\begin{align*}
		\left|\det D_v \Phi_{s,t}(x,v)^{(1)}\right|& \geq C |t-s|^d \quad &\text{for all } |t-s| \leq \tau (\omega).
	\end{align*}
\end{ass}

 In Section \ref{subsec:dispersion_local} we show that further regularity and boundedness assumptions on $\sigma^k$ imply that Assumption \ref{ass:disp_local} is always fulfilled. If we can further ensure the following more restrictive assumption that we have local in time dispersion up to a fixed deterministic constant $\tau$, we are able to show global in time existence of a weak martingale solution to \eqref{IVP_stoch}. 

\begin{ass}\label{ass:disp_glob}
	Fix $T \in (0,\infty)$. Assume that Assumption \ref{ass: ex_flow} is fulfilled and that there exist constants $C$ and $\tau$ independent of $\omega$ with $0<\tau\leq T$, such that for all $\omega \in \Omega$ 
	\begin{align*}
		\left|\det D_v \Phi_{s,t}(x,v)^{(1)}\right|& \geq C |t-s|^d
		\quad &\text{for all } |t-s| \leq \tau.
	\end{align*}
\end{ass}
We verify Assumption \ref{ass:disp_glob} for some classes of coefficients $\sigma^k$ in Section \ref{subsec: dispersion_glob}. 

The following main theorem states the existence of a weak martingale solution to \eqref{IVP_stoch}. 

\begin{thm}\label{thm:main}
	Let $d \ge 2$. Fix $T \in (0, \infty)$ and consider parameters $r,a,p,q$ such that 
	\begin{align*}
		r \in \left(2, \frac{d+3}{2}\right], 
		\quad
		r \ge a \ge 
		\max\left(\frac{d}{2},\frac{d}{d-1}\right) 
		\quad
		\frac{1}{p} = \frac{1}{a} - \frac{1}{rd}, \frac{1}{q} = \frac{1}{a} + \frac{1}{rd}.
	\end{align*} 
	Consider \eqref{IVP_stoch} nonnegative initial data $f_0:\R^d \times \R^d \rightarrow \R$ that is supported in $\R^d \times V$ such that $f_0 \in L^1(\R^{2d}) \cap L^{a}(\R^{2d})$ and $\Vert f_0\Vert_{L^a(\R^{2d})}$ 
	is sufficiently small. Assume that for all $r_1,p_1 \in [1,\infty]$ and $S \in L_t^{r_1}W_x^{1,p_1}$ the turning kernel $K(S)$ is supported in $\R \times \R^d \times V \times V$ and fulfills Assumption \ref{ass:turning_kernel}. 
	\begin{enumerate}
		\item Assume that $\sigma^k$ satisfy Assumption \ref{ass:disp_local} for some stopping-time $\tau$. Then, there exists a $\mathbb{P}$-almost surely positive stopping-time $\tilde{\tau}$ depending on $\norm{f_0}_{L^{a}(\R^{2d})}$ and $\tau$ such that \eqref{IVP_stoch} has a nonnegative weak martingale solution on $[0,\tilde{\tau}]$. 
		\item Assume that $\sigma^k$ satisfy Assumption \ref{ass:disp_glob} for some deterministic constant $\tau$. Then, \eqref{IVP_stoch} has a nonnegative global in time weak martingale solution on $[0,T]$.
	\end{enumerate}
\end{thm}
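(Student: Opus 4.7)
The plan is to construct the solution via a regularization and compactness scheme that converts the stochastic Strichartz estimates established earlier in the paper into a functional setup where mass conservation and the smallness of $\Vert f_0\Vert_{L^a}$ can close an a priori bound. First, I would replace $K$ by a smooth bounded truncation $K_n$ (e.g.\ mollification in $v,v'$ together with a height cut-off) and regularize the initial datum to a sequence $f_0^n \to f_0$ in $L^1\cap L^a$. Under Assumption \ref{ass: ex_flow}, the SDE \eqref{SDE: stoch. flow_intro} generates a global volume-preserving stochastic flow $\Phi_{s,t}$, so that for each $n$ the approximate SPDE admits a strong martingale solution $f^n$ obtained by composition with $\Phi$ and a Picard/fixed-point iteration for the bounded collision operator, with $S^n = (1-\Delta)^{-1}\rho^n$ determined by standard elliptic theory.

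The a priori estimates rest on three ingredients. First, conservation of the $L^1_{x,v}$ mass, following from $\div_v\sigma^k=0$ and the cancellation $\int_V K(S) f' - K^{\ast}(S) f \, \d v' \d v = 0$. Second, the stochastic Strichartz estimates from Section \ref{subsec:dispersion_local}--\ref{subsec: dispersion_glob}, which under Assumption \ref{ass:disp_local} yield, schematically,
\[
\dorm{f^n}{r}{p}{q} \lesssim \norm{f_0^n}_{L^a(\R^{2d})} + \dorm{K(S^n)f^n - K^{\ast}(S^n)f^n}{r'}{p'}{q'}
\]
on the random interval $[0,\tau]$, and under Assumption \ref{ass:disp_glob} extend to $[0,T]$ via the time-splitting argument of \cite{GS2015} applied on intervals of length $\tau$. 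Third, the elliptic bound $\Vert S^n\Vert_{W^{1,p_1}_x}\lesssim \Vert\rho^n\Vert_{L^{p_1}_x}$ combined with Assumption \ref{ass:turning_kernel} allows the right-hand side to be estimated in terms of $\dorm{f^n}{r}{p}{q}^{\theta}$ and the conserved $L^1$ mass via Hölder/interpolation, with exponents $r,a,p,q$ of the statement chosen precisely so that Hölder and the scaling $\tfrac{1}{p}+\tfrac{1}{q}=\tfrac{2}{a}$ close the bound. The smallness of $\Vert f_0\Vert_{L^a}$ then permits a standard bootstrap: defining
\[
\tilde{\tau} \defeq \tau \wedge \inf\Bigl\{ t\ge 0 : \dorm{f^n}{r}{p}{q}_{[0,t]} \ge \eta\Bigr\}
\]
for a sufficiently small $\eta=\eta(\Vert f_0\Vert_{L^a})$, one shows the infimum is not attained before $\tau$ almost surely.

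Equipped with uniform bounds in $L^1\cap L^r_t L^p_x L^q_v$, I would next prove tightness of $(\mathcal{L}(f^n,\beta^n))$. Time-equicontinuity of $\langle f^n,\varphi\rangle$ follows from the SPDE itself (Burkholder--Davis--Gundy for the stochastic term, using boundedness of $\sigma^k$ from Assumption \ref{ass: ex_flow}); spatial compactness follows from averaging lemmas applied to $\rho^n$, together with the Strichartz bound. By the Jakubowski--Skorokhod theorem I would realize almost sure convergence on a new stochastic basis $(\tilde\Omega,\tilde{\mathcal{F}},\tilde{\mathbb{P}})$, pass to the limit in the linear terms directly and in $K(S^n)f^n$ using that $S^n\to S$ strongly in $L^{r_1}_t W^{1,p_1}_x$ by elliptic regularity applied to strongly converging $\rho^n$, and finally identify the stochastic integral against the limiting Brownian motions by the classical martingale characterization (checking quadratic variations of the candidate martingales $\langle f,\varphi\rangle - \langle f_0,\varphi\rangle - \int_0^\cdot v\nabla_x\varphi + \text{coll.} \,\d s$). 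Part (ii) proceeds identically but uses the global in time Strichartz bound, iterating the continuation argument over finitely many intervals of size $\tau$ to reach $T$.

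The main obstacle I expect is the reconciliation of the pathwise character of the Strichartz estimate, derived from properties of the flow $\Phi_{s,t}(\omega)$, with the adaptedness requirements of the martingale solution, especially in part (i) where the stopping time $\tau(\omega)$ is truly random and the Stratonovich-to-Itô conversion must respect the filtration generated by the limiting flow. A secondary difficulty is ensuring that the nonlinearity $K(S)f$ passes to the limit without renormalization: here the linear dependence of $K$ on $S$ (Assumption \ref{ass:turning_kernel}) is crucial, since it means only one factor needs strong compactness, the other being handled by weak-$\ast$ convergence against the Strichartz space.
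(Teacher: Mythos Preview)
Your proposal is essentially correct and tracks the paper's proof closely: regularize $K$ and $f_0$, solve the approximate problem by a fixed-point argument along the volume-preserving flow (Lemma~\ref{lem: approx_sol_measurable}), derive uniform $L^r_tL^p_xL^q_v$ bounds via the stochastic Strichartz estimates of Lemma~\ref{lm: stoch_strich} combined with mass conservation and a bootstrap on small $\|f_0\|_{L^a}$ (Lemma~\ref{lem:a-priori}), then apply Jakubowski--Skorokhod and identify the limit by a martingale characterization of the quadratic and cross variations.

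Two points where your sketch diverges slightly from the paper are worth flagging. First, for spatial compactness you invoke averaging lemmas on $\rho^n$; the paper instead exploits the elliptic smoothing in $S-\Delta S=\rho$ to obtain a uniform bound on $\nabla_x S^n$ in $L^r_tL^\alpha_x$ and combines this with the time-regularity of Lemma~\ref{lem: hoelder_regularity_t} to get strong convergence of $S^n$ in $L^r_tL^\alpha_x(B)$ on compacta (Step~3). This is simpler and sidesteps any question of stochastic averaging. Second, the inhomogeneous Strichartz estimate is not applied with the dual exponents $(r',p',q')$ as you write, but with a second jointly admissible tuple $(\tilde r,\tilde p,\tilde q)$ built on $a'$; the specific choice $\tilde r'=r/2$ is what makes the right-hand side close as $\|f\|_{L^r_tL^p_xL^q_v}^2$ and drives the parameter constraints in the statement. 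Your anticipated obstacle---reconciling the pathwise Strichartz bounds with adaptedness after Skorokhod---is exactly what Step~7 handles via the martingale representation lemma, so you have correctly located the delicate point.
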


\subsection{Comments on the literature}
The existence of solutions for the deterministic kinetic model of chemotaxis without external forcing was, for example, studied in Hillen and Othmer (\cite{HO2000}, \cite{HO2002}), Chalup et. al. {\cite{CMPS2004}}, Hwang et al. (\cite{HKS2005a}, \cite{HKS2005b}, \cite{HKS2006} and Perthame {\cite{P2004b}}. They use different assumptions on the boundedness of the turning kernel and initial value. A key ingredient in their assumptions is its delocalized structure. 
In 2008, Bournaveas, Calvez, Guit\'{e}rrez and Perthame first used dispersion and Strichartz estimates in order to show the global in time existence of solutions of the deterministic analogue of \eqref{IVP_stoch} (\cite[Theorem 3]{BCGP2008}) in dimension $d=3$ and with integrability in time $r =3$. With this argument, delocalization is no longer needed. Under some regularity and boundedness conditions on $\sigma^k$ we extend their result to the stochastic case. In addition, in combination with the $TT^{\ast}$ argument we extend it to any dimension $d \ge 3$ and a wider class of parameters $r, a, p$ and $q$. \\
 A comprehensive analysis of deterministic kinetic transport equations which includes most of the deterministic analogues of the statements in Section \ref{sec: strichartz_stoch} can be found in \cite[Section 2]{P2004}. Dispersion and Strichartz estimates were discussed in \cite[Théorème 2 and Théorème 1]{CP1996} for the first time. Later, Ovcharov improved the inhomogeneous Strichartz estimate using the $TT^{\ast}$-argument, as detailed in \cite[Therorem 2.4.]{O2011}.

There are other works and techniques dealing with stochastic kinetic transport equations. In \cite{PS2018} Punshon-Smith and Smith show the existence of renormalized martingale solutions for the Boltzmann equation. Their work is a stochastic extension of a work by Di Perna and Lions \cite{DL1989b}. They use the concept of renormalized solutions whereas we make use of mixed $L^p$-spaces. In \cite{BP2022} Bedrossian and Papathanasiou use energy-methods to show the local well-posedness for Vlasov-Poisson and Vlasov-Poisson-Fokker-Planck systems in
stochastic electromagnetic fields. In comparison to their work, we rely on Strichartz estimates, that enable us to work in mixed $L_t^rL_x^pL_v^q$-spaces for different choices of $r$, $p$ and $q$, and combine the pathwise analysis with the concept of martingale solutions. 

Properties of stochastic flows have been discussed extensively by Kunita in \cite{K1997}. We refer to various statements of this work \cite[Theorem 1.4.1, Theorem 3.4.1, Theorem 4.3.2., Theorem 4.6.5, Example on pages 106f.]{K1997} when we discuss conditions on the stochastic drift coefficients $\sigma^k$ such that we have sufficient control on the local in time dispersion.

\subsection{Structure of the paper}
In Section \ref{sec: notation} we give an overview of the relevant notation and solution concepts.
In Section \ref{sec: strichartz_stoch} we analyze stochastic kinetic transport and show Strichartz estimates for stochastic kinetic transport in the cases where Assumption \ref{ass:disp_local} or \ref{ass:disp_glob} are fulfilled.
In Section \ref{sec:dispersion} we discuss conditions and counterexamples for noise coefficients $\sigma^k$ that satisfy Assumption \ref{ass:disp_local} or Assumption \ref{ass:disp_glob}.
Finally, in Section \ref{sec:chemotaxis_det} we prove Theorem \ref{thm:main}. 

\section{Notation and preliminaries}\label{sec: notation}
When we consider global in time solutions in the stochastic setting, we usually work in $\Omega \times [0,T] \times \R^d \times \R^d$ for arbitrary but finite time $T > 0$. We will frequently omit the dependency on the probabilistic variable $\omega$.\\
We frequently use mixed Lebesgue spaces. We either work in the space $L_t^rL_x^pL_v^q:=L^r([0,T], L_x^pL_v^q)$ for arbitrary but fix $T$ or the space $L_t^rL_x^pL_v^q:=L^r([0,\tau], L_x^pL_v^q)$ for a stopping-time $\tilde{\tau}$ with $0 \leq \tau\leq T$. \\
Furthermore, in the proof of Theorem \ref{thm:main} we consider the Sobolev-space $$W_t^{\kappa, \lambda}([0,T],\R) = \left\{f \in L^\lambda([0,T],\R):
\int_0^{T} \int_0^{T}\frac{\left \lvert f(t)-f(s) \right\rvert ^{\lambda}}{|t-s|^{\kappa \lambda + 1}} \d s \d t < \infty
\right\}$$ and the semi-norm
$
\lVert f \rVert_{\dot{W}_t^{\kappa,\lambda}} =
\left(\int_0^{T} \int_0^{T}\frac{\left \lvert f(t)-f(s) \right\rvert ^{\lambda}}{|t-s|^{\kappa \lambda + 1}} \d s \d t\right)^{\frac{1}{\lambda}}.
$ 

For calculations it is often more convenient to work with the equation in It\^o form and use some abbreviations. Consequently, we will occasionally use the notation $a(x,v) = \frac{1}{2}\sum_{k\in \N} \sigma^k(x,v) \otimes \sigma^k(x,v)$, $\mathcal{L}_{\sigma}\varphi \defeq \div _v (a\nabla_v \varphi)$, $g \defeq \int_{V} K(S) f' - K^{\ast}(S) f \d v'$ and $g^n \defeq \int_{V} K^n(S^n) (f^n)' - (K^n)^{\ast}(S^n) f^n \d v'$.

We are interested in weak martingale solutions to \eqref{IVP_stoch}. 

\begin{definition}[Weak martingale solution]\label{def: weak martingale_sol}
	Fix $T \in (0, \infty)$. We say $f$ is a \textit{weak martingale solution to the stochastic model of chemotaxis (SCT) \ref{IVP_stoch} on $[0,\tau]$}
	provided there exists a stochastic basis 
	$(\Omega, \mathcal{F}, \mathbb{P}, (\mathcal{F}_t)_{t\ge 0}^{\tau}, (\beta^k)_{k \in \N})$ where $\tau\in [0,T]$ is a $\mathbb{P}$-almost surely positive $\{\mathcal{F}_t\}$-stopping-time such that
	\begin{enumerate}
		\item For all $\varphi \in C_c^{\infty}(\R^{2d})$, the process 	 $\left \langle f, \varphi \right\rangle: \Omega \times [0, \tau] \rightarrow \R$ admits $\mathbb{P}$-a.s. continuous sample paths. Moreover, $f$ belongs to $L^2(\Omega; L^{\infty}_t (L^1_{x,v}))$.
		\item $f(\cdot,\omega)$ is a nonnegative element of $L^r([0,\tau],L_x^pL_v^q)$, $\mathbb{P}$-a.s. 
		\item The processes $(f_t)^{\tau}_{t=0},(\int_0^t g_sds)^{\tau}_{t=0}$ and each Brownian motion $(\beta^k_t)^{\tau}_{t=0}$ are adapted to $(\mathcal{F}_t )^{\tau}_{t=0}$.
		\item For all test functions $\varphi \in C^{\infty}_c(\R^{2d} )$, the process $(M_t(\varphi))^{\tau}_{t=0}$ defined by 
		\begin{align*}
			M_t(\varphi) =	\int_{\R^{2d}} f_t\varphi dxdv - \int_{\R^{2d}}f_0\varphi dxdv - \int_{0}^{t}\int_{\R^{2d}}f(v \cdot \nabla_x \varphi + \mathcal{L}_{\sigma} \varphi) + g\varphi dxdvds
		\end{align*}
		is a $(\mathcal{F}_t)^{\tau}_{t=0}$ martingale. Moreover, its quadratic variation and cross variation with respect to each $\beta^k$ are given by
		\begin{align*}
			\langle \langle M(\varphi), M(\varphi) \rangle \rangle_t &=\sum_{k \in \N}\int_0^t\left(\int_{\R^{2d}}f_s\sigma^k \cdot \nabla_v \varphi \d x \d v\right)^2 ds,\\
			\langle \langle M(\varphi),\beta^k \rangle \rangle_t &= \int_0^t\int_{\R^{2d}} f_s\sigma^k \cdot \nabla_v \varphi \d x \d v.
		\end{align*}
	\end{enumerate}
\end{definition}

Let us define a set of parameters which allow for Strichartz estimates both in the deterministic and stochastic setting. 
\begin{definition}\label{def: admissible}
	A tuple $(q,r,p,a)$ is called \textit{admissible}, if
	\begin{align*}
		\begin{array}{lll} 				
			\frac{2}{r} = d\left(\frac{1}{q}-\frac{1}{p}\right), &\frac{1}{a} = \frac{1}{2} (\frac{1}{p} + \frac{1}{q}),&\\
			1 \leq a \leq \infty, & q^*(a) \leq q \le a, & a \leq p \leq p^*(a),
		\end{array}
	\end{align*}
	with 
	\begin{align*}
		\begin{cases}
			q^{\ast}(a) = \frac{da}{d+1}, \quad p^{\ast}(a) = \frac{da}{d-1},& \text{ if } \frac{d+1}{d} \le a \le \infty\\
			q^{\ast}(a) = 1, \quad p^{\ast}(a) = \frac{a}{2-a},& \text{ if } 1 \le a \le \frac{d+1}{d},
		\end{cases}
	\end{align*}
	except in the case $d = 1, (r, p, q) = (a,\infty, \frac{a}{2})$.
\end{definition}

\begin{definition}
	Two admissible pairs $(q,r,p,a)$ and $(\tilde{q},\tilde{r},\tilde{p},\tilde{a})$ are called \textit{jointly admissible}, if	$\tilde{a} = a'$.
\end{definition}

When examining the properties of the stochastic flow, we need to address the local characteristic $(a(x,y,t), b(x,t),A_t)$ of a semimartingale (compare \cite[pp. 84ff.]{K1997}).

\begin{definition}\label{def: local_characteristic_semi}
	Let $F(x,t)$ be a family of continuous semimartingales decomposed as $F(x,t)=M(x,t) + B(x,t)$, where $M(x,t)$ is a continuous localmartingale and $B(x,t)$ is a continuous process of bounded variation. Let $(a(x,y,t),A_t)$ the local characteristic of $M(x,t)$, which is defined via the joint characteristic of $M(x,t)$ and $M(y,t)$ (see \cite[pp. 79ff.]{K1997} for a precise definition). Assume that $B(x,t)$ is written as $$B(x,t)=\int_0^t b(x,x) \d A_s,$$ where $b(x,t)$ is a family of predictable processes. Then, the triple $(a(x,y,t), b(x,t),A_t)$ is called the \textit{local characteristic of the family of semimartingales $F(x,t)$}.
\end{definition}

 More precisely, we have to show that these characteristic belongs to the class $B_{ub}^{m,\delta}$ or $B_{b}^{m,\delta}$ (compare \cite[pp. 72ff.]{K1997}).

\begin{definition}\label{def: local_characteristic_class}
	Let $m \in \N$ and $0 \leq \delta \leq 1$. The local characteristic $(a(x,y,t), b(x,t),A_t)$ of a semimartingale belongs to the \textit{class $B_{ub}^{m,\delta}$} if and only if the norms $\norm{a(t)}_{m+\delta}^{\sim}$ and $\norm{b(t)}_{m+\delta}$ are bounded. For a precise definition of these norms see \cite[pp. 72ff. and pp. 334f.]{K1997}.
\end{definition}

\begin{definition}\label{def: local_characteristic_class_B_b}
	Let $m \in \N$, $0 \leq \delta \leq 1$. The local characteristic $(a(x,y,t), b(x,t),A_t)$ of a semimartingale belongs to the \textit{class $B_{b}^{m,\delta}$} if and only if the terms $\int_0^T\norm{a(t)}_{m+\delta}^{\sim}\d A_t < \infty$ and $\int_0^T\norm{b(t)}_{m+\delta}< \infty$ a.s., where the norms $\norm{a(t)}_{m+\delta}^{\sim}$ and $\norm{b(t)}_{m+\delta}$ are defined as in Definition \ref{def: local_characteristic_class}.
\end{definition}

\section{Stochastic kinetic transport and Strichartz estimates}\label{sec: strichartz_stoch}
Before showing the existence of a solution to the stochastic chemotaxis system, we will first establish some results for linear and nonlinear stochastic kinetic transport, particularly focusing on dispersion and Strichartz estimates. A crucial component of proving these results is a thorough understanding and control of the stochastic flow. Therefore, let $\Phi_{s,t}(x,v)$ denote the stochastic flow associated with the SDE (\ref{SDE: stoch. flow_intro}), where $t\mapsto \Phi_{s,t}(x,v) = (X_t, V_t)$ is the solution of (\ref{SDE: stoch. flow_intro}) with the initial condition $\Phi_{s,s} (x,v) = (x,v)$. Let $\Psi_{s,t}(x,v)$ be the inverse of $\Phi_{s,t}(x,v)$ such that $\Psi_{s,t}(x,v) = \Phi_{t,s}(x,v)$. When referring specifically to the position or velocity components, we use $\Phi_{s,t}(x,v)^{(1)}$ or $\Phi_{s,t}(x,v)^{(2)}$ for the position or velocity parts, respectively, and similarly $\Psi_{s,t}(x,v)^{(1)}$ or $\Psi_{s,t}(x,v)^{(2)}$ for the position or velocity components of the inverse. 

If Assumption \ref{ass: ex_flow} is fulfilled, we deduce that the stochastic flow exists, is unique and $\mathbb{P}$-almost surely volume-preserving.

\begin{lem}\label{lem: volumepreserving}
	Consider the SDE \eqref{SDE: stoch. flow_intro}. Assume that Assumption \ref{ass: ex_flow} is satisfied. Then, for all $s<t$ the stochastic flow $\Phi_{s,t}$ exists, is unique and is $\mathbb{P}$-almost surely volume preserving with $$\left|\det D \Phi_{s,t}^{-1} (x,v) \right | = \left| \det D \Phi_{s,t}(x,v)\right | = 1.$$
\end{lem}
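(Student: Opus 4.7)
The plan is two-fold: first establish existence and uniqueness of the flow, then compute its Jacobian. Both steps reduce to direct applications of results from Kunita's monograph combined with the divergence-free structure built into Assumption \ref{ass: ex_flow}.

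First, I would rewrite \eqref{SDE: stoch. flow_intro} as a single Stratonovich SDE on $\R^{2d}$ for $Y_t = (X_t, V_t)$ with drift $b_0(x,v) = (v,0)$ and noise coefficients $\tilde\sigma^k(x,v) = (0,\sigma^k(x,v))$. The hypothesis that $\sigma^k \in C^1(\R^{2d})$ together with the assumption that the associated local characteristic belongs to $B_b^{0,1}$ places this SDE precisely in the framework of Kunita's existence theorem for stochastic flows of homeomorphisms (Theorem 4.6.5 in \cite{K1997}); this yields a $\mathbb{P}$-a.s. unique stochastic flow $\Phi_{s,t}$, and the $C^1$-regularity of the coefficients guarantees that $(x,v) \mapsto \Phi_{s,t}(x,v)$ is almost surely differentiable, so that $D\Phi_{s,t}$ is well-defined and the determinant makes sense.

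For the volume-preservation part, I would apply Kunita's identity for the Jacobian of a stochastic flow in Stratonovich form (Theorem 4.3.2 in \cite{K1997}) to $J_{s,t}(x,v) \defeq \det D \Phi_{s,t}(x,v)$, yielding
\begin{align*}
J_{s,t} = 1 + \int_s^t J_{s,u}\, \div_{(x,v)} b_0(\Phi_{s,u})\, \d u + \sum_k \int_s^t J_{s,u}\, \div_{(x,v)} \tilde\sigma^k(\Phi_{s,u}) \circ \d \beta_u^k.
\end{align*}
A direct computation gives $\div_{(x,v)} b_0 = \div_x v + \div_v 0 = 0$, while $\div_{(x,v)} \tilde\sigma^k = \div_v \sigma^k = 0$ by Assumption \ref{ass: ex_flow}. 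The right-hand side thus vanishes identically, so by uniqueness $J_{s,t} \equiv 1$, and therefore $|\det D\Phi_{s,t}| = 1$. The conclusion for the inverse is then immediate: from the flow property $\Phi_{s,t} \circ \Phi_{t,s} = \mathrm{id}$ one reads off $\Phi_{s,t}^{-1} = \Phi_{t,s}$, so $D\Phi_{s,t}^{-1}(x,v) = \bigl(D\Phi_{s,t}(\Phi_{t,s}(x,v))\bigr)^{-1}$ and $|\det D\Phi_{s,t}^{-1}| = 1/|\det D\Phi_{s,t}| = 1$.

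The main technical point is verifying the applicability of Kunita's theorems to this particular SDE, since the drift $(v,0)$ is unbounded and so some care is needed in checking that the extended local characteristic still lies in the required regularity class; this is the reason Assumption \ref{ass: ex_flow} is phrased in terms of the local characteristic rather than bounds on $\sigma^k$ alone. Once the flow exists, the Stratonovich formulation makes the divergence-free computation essentially immediate, which is the decisive advantage over an It\^o formulation where Stratonovich-to-It\^o corrections would reintroduce derivatives of $\sigma^k$.
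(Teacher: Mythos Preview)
Your proposal is correct and follows essentially the same route as the paper: invoke Kunita for existence and uniqueness of the flow, then apply Kunita's stochastic Liouville identity (Theorem~4.3.2 in \cite{K1997}) together with the divergence-free structure to conclude $\det D\Phi_{s,t}\equiv 1$. The paper's proof is terser---it cites Theorem~3.4.1 (and the Example on pp.~106f.) rather than Theorem~4.6.5 for existence, and omits the explicit divergence computation and the inverse-flow argument that you spell out---but the substance is identical.
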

\begin{proof}
	The stochastic flow exists globally and is unique due to Kunita \cite[Theorem 3.4.1]{K1997}. Details can be found also in \cite[Example on pages 106f.]{K1997}.
	Moreover it is volume-preserving due to Kunita (\cite[Theorem 4.3.2]{K1997}). The main idea is to use a stochastic analogue of Liouville's theorem.
\end{proof}

\subsection{Stochastic kinetic transport}
In this section, we represent the solution of linear and nonlinear stochastic kinetic transport equations with respect to the stochastic flow. 
\begin{lem}
Let $f_0:\R^{2d}\rightarrow \R$ be continuously differentiable. Then, for almost all $\omega$ the unique strong solution of the linear stochastic kinetic transport equation
	\begin{align*}
		\d t f(\omega, t,x,v) 
		+ v \cdot \nabla_x f(\omega, t,x,v) \d t 
		+ \div_v \sum_k f\sigma^k \circ \d \beta^k_t
		&= 0 \\
		f(\omega,0,x,v) &= f_0(x, v)
	\end{align*}
	is given by
	\begin{align*}
		f(t,x,v)=f_0\left(\Psi_{0,t}(x,v)\right).\\
	\end{align*}
\end{lem}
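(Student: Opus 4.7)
The plan is to reduce the equation to the transport form and then solve it by the stochastic method of characteristics. First I would observe that, since Assumption \ref{ass: ex_flow} gives $\div_v \sigma^k = 0$, the conservative term in the PDE collapses to
\begin{align*}
\div_v \sum_k f \sigma^k \circ \d \beta^k_t
= \sum_k \sigma^k \cdot \nabla_v f \circ \d \beta^k_t,
\end{align*}
so the equation becomes a pure transport equation along the characteristics described by \eqref{SDE: stoch. flow_intro}.

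Next I would verify by direct computation that the proposed formula $f(t,x,v) := f_0(\Psi_{0,t}(x,v))$ solves it. The cleanest way is to test the claim after composing with the forward flow: set $\tilde f(t,x,v) := f(t, \Phi_{0,t}(x,v))$. Since $\Psi_{0,t} = \Phi_{0,t}^{-1}$ the candidate formula gives $\tilde f(t,x,v) = f_0(x,v)$, which is constant in $t$. The task is therefore to show that any $C^1$ solution $f$ satisfies $\d \tilde f = 0$, and then to invert this step. Under Assumption \ref{ass: ex_flow} the flow $(t,x,v)\mapsto \Phi_{0,t}(x,v)$ is a spatial $C^1$-diffeomorphism and a continuous semimartingale in $t$ whose local characteristic lies in $B_b^{0,1}$, so Kunita's generalized Stratonovich/It\^o--Wentzell chain rule (\cite[Theorem 3.3.1 and Theorem 4.4.5]{K1997}) applies to the composition. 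Computing
\begin{align*}
\d \tilde f
= \bigl(\d f\bigr)\!\left(t, \Phi_{0,t}(x,v)\right)
+ \nabla_x f \circ \d \Phi_{0,t}^{(1)}(x,v)
+ \nabla_v f \circ \d \Phi_{0,t}^{(2)}(x,v),
\end{align*}
and substituting $\d\Phi_{0,t}^{(1)} = V_t \,\d t$ and $\d\Phi_{0,t}^{(2)} = \sum_k \sigma^k(\Phi_{0,t}) \circ \d \beta^k_t$, the bracket evaluated along the flow is exactly the left-hand side of the PDE. Thus $\d \tilde f = 0$ iff $f$ solves the equation, and since $\Phi_{0,t}$ is a diffeomorphism this is a pointwise equivalence. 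Taking $\tilde f(t,\cdot) \equiv f_0$ and reading the equivalence backwards recovers the claimed formula.

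For uniqueness I would consider the difference $w = f^1 - f^2$ of two $C^1$ solutions: $w$ solves the same homogeneous equation with $w|_{t=0} = 0$, so $\tilde w(t,x,v) := w(t,\Phi_{0,t}(x,v))$ has $\d \tilde w = 0$ and $\tilde w(0) = 0$, hence vanishes identically; composition with $\Psi_{0,t}$ then gives $w \equiv 0$. The main obstacle is a purely technical one: carefully justifying the composition rule for $f_0 \circ \Psi_{0,t}$ and the chain rule for $f \circ \Phi_{0,t}$ in the stochastic setting, which requires exactly the spatial $C^1$ regularity of $f_0$, the $C^1$ regularity of $\sigma^k$ in Assumption \ref{ass: ex_flow}, and the $B_b^{0,1}$ bound on the local characteristic; the divergence-free condition is what makes the It\^o correction in passing between Stratonovich and conservative forms vanish, so no delicate cancellations are needed beyond invoking the appropriate theorem from \cite{K1997}.
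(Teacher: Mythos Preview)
Your proposal is correct and follows essentially the same approach as the paper: the stochastic method of characteristics, composing with the flow and applying the Stratonovich chain rule. The only difference is organizational---the paper verifies existence by directly differentiating $f_0 \circ \Psi_{0,t}$ along the backward characteristics (writing $\d\Xi_t = -\mathcal{V}_t\,\d t$, $\d\mathcal{V}_t = -\sum_k \sigma^k \circ \d\beta^k_t$) and reserves your forward-flow constancy argument $t\mapsto f(t,\Phi_{0,t})$ for the uniqueness step, whereas you run the forward composition for both.
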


\begin{proof} Since (\ref{SDE: stoch. flow_intro}) is in Stratonovich-form the corresponding backward SDE remains the same. Let $(\Xi_t,\mathcal{V}_t) = \Psi_{0,t}(x,v)$ the solution of the backward SDE given by
	\begin{align*}
		\d \Xi_t &= -\mathcal{V}_t \d t,\\ 
		\d \mathcal{V}_t &= -\sum_{k \in \N} \sigma^k(\Xi_t, \mathcal{V}_t) \circ \d \beta^k_t,
		\end{align*}
with the initial condition $\Xi_t(t) = x$ and $\mathcal{V}_t(t) = v.$
Let $f(t,x,v) = f_0(\Xi_t,\mathcal{V}_t)$. Then, we have 
	\begin{align*}
		\d f &= \sum_{i=1}^{d}\left(\frac{\partial}{\partial x_i}f_0\right)(\Xi_t,\mathcal{V}_t)\cdot \d (\Xi_t)_i + \sum_{i=1}^{d}\left(\frac{\partial}{\partial v_i}f_0\right)(\Xi_t,\mathcal{V}_t)\cdot \d (\mathcal{V}_t)_i.
	\end{align*}
	Substituting $\d \Xi_t = -\mathcal{V}_t \d t$ and $\d \mathcal{V}_t = -\sum_{k \in \N} \sigma^k(\Xi_t, \mathcal{V}_t) \circ \d \beta^k_t$ we obtain that
\begin{align*}
		\d f& = -\sum_{i=1}^{d}\left(\frac{\partial}{\partial x_i}f_0\right)(\Xi_t,\mathcal{V}_t)\cdot (\mathcal{V}_t)_i \d t - \sum_{i=1}^{d}\left(\frac{\partial}{\partial v_i}f_0\right)(\Xi_t,\mathcal{V}_t)\cdot \sum_{k \in \N} \sigma^k(\Xi_t, \mathcal{V}_t) \circ \d (\beta^k_t)_i.
\end{align*}
Rewriting this, we get
	\begin{align*}
		\d f= -v \nabla_x f \d t - \sum_{k \in \N} \div_v\left(f \sigma^k(x,v) \circ \d \beta^k_t\right).
	\end{align*}
	Thus, $f(t,x,v)=f_0\left(\Psi_{0,t}(x,v)\right)$ solves the linear stochastic kinetic transport solution. To establish the uniqueness of a strong solution, we define the function $v(t) \defeq u(s+t,\Phi_{s,t}(x,v)) =u(s+t,X_t,V_t)$ for a given strong solution $u$ of the linear kinetic transport equation and $t > -s$. By performing a similar calculation as above, we show that $\frac{\partial v(t)}{\partial t} = 0 $, implying that $u(s+t,\Phi_{s,t}(x,v))$ is constant. Therefore, with the initial value given, the uniqueness follows.
\end{proof}

\begin{remark}
	This equation also admits an unique distributional solution $f\in L^p$ provided that $f_0$ is bounded in $L^p$.
\end{remark}

\begin{remark}
	A special case of this stochastic linear kinetic transport equation is given by
\begin{align*}
	\d f + v \nabla_x f \d t + \nabla_v f \circ \d \beta_t &= 0 \\
	f|_{t=0} &= f_0.
\end{align*}
The solution to this equation is 
\begin{align*}
	f(t,x,v,\omega)
	&= f_0 \left(x - \int_{0}^{t}\beta_s\d s -t (v -\beta_t),v-\beta_t\right)\\
	&= g\left(t, x - \int_{0}^{t}\beta_s\d s, v-\beta_t\right),						
\end{align*}
where $g$ is a solution of the deterministic kinetic transport equation
\begin{align*}
	\frac{\partial}{\partial t} g + v\nabla_x g & = 0 \\
	g(0,x,v) &= f_{0}(x,v). 
\end{align*}
In this special case, there is only additive noise in the phase space variables. Therefore, the dispersive behavior is not affected by the stochastic drift.
\end{remark}

\begin{lem}[Inhomogeneous stochastic transport equation]
	Let $f_0:\R^{2d}\rightarrow \R$ and $h(\omega,\dot):(0,\infty)\times\R^{2d} \rightarrow \R$ be continuously differentiable. Then, for almost all $\omega$ the unique strong solution of the inhomogeneous stochastic kinetic transport equation
	\begin{align*}
		\d f(\omega, t,x,v) 
		+ v \cdot \nabla_x f(\omega, t,x,v) \d t 
		+ \div_v \sum_k f\sigma^k \circ \d \beta^k_t
		&= h(\omega, t, x,v) \d t \\
		f(\omega,0,x,v) &= f_0(x, v)
	\end{align*}
	is given by 
	\begin{align*}
		f(\omega, t, x, v)
		= &f_0 \left(\Psi_{0,t}(x,v)\right) + \int_{0}^{t} 
		h\left( \omega, s,\Psi_{s,t}(x,v) \right) \d s.
	\end{align*}	
\end{lem}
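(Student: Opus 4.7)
The plan is to prove the formula by Duhamel's principle, reducing to the homogeneous result established in the previous lemma, and then to obtain uniqueness from linearity.

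First, I would split the candidate solution as $f = f^{(1)} + f^{(2)}$, where $f^{(1)}(t,x,v) = f_0(\Psi_{0,t}(x,v))$ and $f^{(2)}(t,x,v) = \int_0^t h(s, \Psi_{s,t}(x,v))\,\d s$. The previous lemma already tells us that $f^{(1)}$ solves the homogeneous linear stochastic kinetic transport equation with initial datum $f_0$, so it suffices to show that $f^{(2)}$ satisfies the same SPDE with zero initial datum and forcing $h$. For each fixed $s \ge 0$, define the auxiliary process $w^{s}(t,x,v) := h(s, \Psi_{s,t}(x,v))$ for $t \ge s$. By applying the previous lemma with initial time $s$ (in place of $0$) and initial datum $h(s,\cdot,\cdot)$, one obtains that $w^s$ is the unique strong solution of the homogeneous kinetic transport equation on $[s,\infty)$, i.e.
\begin{align*}
\d_t w^{s} + v \cdot \nabla_x w^{s}\,\d t + \div_v \sum_k w^{s}\sigma^k \circ \d \beta^k_t = 0, \qquad w^{s}(s,x,v) = h(s,x,v).
\end{align*}

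Next, I would differentiate $f^{(2)}(t,x,v)=\int_0^t w^{s}(t,x,v)\,\d s$ in $t$. The boundary term produces $w^{t}(t,x,v)\,\d t = h(t,\Psi_{t,t}(x,v))\,\d t = h(t,x,v)\,\d t$ since $\Psi_{t,t}=\mathrm{id}$, while the interior term contributes $\int_0^t \d_t w^{s}(t,x,v)\,\d s$. Inserting the SPDE satisfied by $w^s$ and exchanging the time integral with the spatial derivative and the Stratonovich integral (a stochastic Fubini step, converted to Itô form using the definition of $\mathcal{L}_\sigma$ if needed) yields
\begin{align*}
\d f^{(2)} = h(t,x,v)\,\d t - v\cdot \nabla_x f^{(2)}\,\d t - \div_v\sum_k f^{(2)}\sigma^k \circ \d \beta^k_t,
\end{align*}
with $f^{(2)}(0,\cdot,\cdot) = 0$. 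Adding the two contributions gives the claimed equation.

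Uniqueness is immediate from linearity: any two strong solutions differ by a strong solution of the homogeneous equation with zero initial datum, which by the uniqueness statement of the previous lemma must vanish. The main obstacle I foresee is making the exchange of $\int_0^t \d s$ with the Stratonovich increment $\circ\,\d \beta^k_t$ rigorous; I would either convert to the Itô formulation (so that a classical stochastic Fubini theorem applies, using the regularity of $w^s$ in $s$ inherited from $h$ and from the $s$-dependence of the backward flow via Kunita's results) and then translate back, or alternatively verify the identity tested against a compactly supported $\varphi \in C_c^\infty(\mathbb{R}^{2d})$, in which case the stochastic integrals are already scalar and standard stochastic Fubini theorems apply directly.
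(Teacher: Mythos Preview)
Your proposal is correct and follows essentially the same approach as the paper: the paper's entire proof reads ``This result follows from the variation of constants formula,'' which is precisely the Duhamel decomposition $f = f^{(1)} + f^{(2)}$ you spell out. Your version is considerably more detailed than the paper's, and your caution about justifying the exchange of $\int_0^t \d s$ with the Stratonovich increment (via conversion to It\^o form and stochastic Fubini, or by testing against $\varphi \in C_c^\infty$) is a point the paper does not address explicitly.
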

\begin{proof} This result follows from the variation of constants formula. 
\end{proof}

\begin{remark}
	We call functions that satisfy this variation of constants formula mild solutions. 
\end{remark}

\subsection{Dispersion estimates for stochastic kinetic transport}
To show dispersion estimates, we need to examine the behavior of $\Phi_{s,t}$, particularly its derivative with respect to $v$. If we can ensure that Assumption \ref{ass:disp_local} or Assumption \ref{ass:disp_glob} is satisfied, we can establish dispersion estimates either locally or globally in time. With these estimates in hand, we will first show Strichartz estimates and then, in Section \ref{sec:dispersion} we will explore conditions on $\sigma^k $ such that Assumption \ref{ass:disp_local} or Assumption \ref{ass:disp_glob} is met. 

With Lemma \ref{lem: volumepreserving} and Assumption \ref{ass:disp_local} or Assumption \ref{ass:disp_glob} we are able to show local in time dispersion estimates. 
\begin{lem}[Dispersive decay]\label{lem:dispersivdecay}	\
		\begin{enumerate}\item\label{it:disp_stoch1} Let $1 \leq a \leq \infty$ and $f \in L_{x,v}^a$. Then, for all $s,t$ and almost all $\omega$ we have
		\begin{align*}
			\norm{f(\Phi_{s,t}^{-1})}_{L_{x,v}^a} &= 	\norm{f}_{L_{x,v}^a}.
		\end{align*}
		\item\label{it:disp_stoch2} 	Let $1 \leq q \leq p \leq \infty$, $f \in L_{x}^pL_v^q$ and $\tau$ as in Assumption \ref{ass:disp_local} or Assumption \ref{ass:disp_glob}. Then, there exists $C > 0$ such that for all $s,t$ with $\vert t-s \vert \leq \tau$ and almost all $\omega \in \Omega$ we have
		\begin{align*}
			\norm{f(\Phi_{s,t}^{-1})}_{L_x^pL_v^q} 
			\leq C \cdot \left\vert t-s \right\vert ^{-d(\frac{1}{q}-\frac{1}{p})} \norm{f}_{L_x^qL_v^{p}}.
		\end{align*}
		\end{enumerate}
\end{lem}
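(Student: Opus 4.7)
My plan is to treat the two parts separately, both leaning on Lemma \ref{lem: volumepreserving} and change-of-variables under the stochastic flow.

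Part (\ref{it:disp_stoch1}) is immediate: since Lemma \ref{lem: volumepreserving} asserts that $|\det D\Phi_{s,t}^{-1}| = 1$ $\mathbb{P}$-almost surely, the substitution $(y,w) = \Phi_{s,t}^{-1}(x,v)$ gives
\[
\int |f(\Phi_{s,t}^{-1}(x,v))|^a \, dx \, dv \;=\; \int |f(y,w)|^a \, dy \, dw,
\]
which is the stated $L^a_{x,v}$-isometry.

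For part (\ref{it:disp_stoch2}) my plan is to establish the endpoint estimate at $p=\infty$ directly and then recover the full range by interpolation with part (\ref{it:disp_stoch1}) at the exponent $a=q$. Writing $\Psi_{s,t} = \Phi_{s,t}^{-1} = \Phi_{t,s}$, the lower-bound in Assumption \ref{ass:disp_local} (applied with the roles of $s,t$ swapped, or directly in Assumption \ref{ass:disp_glob}) yields $|\det D_v \Psi_{s,t}^{(1)}(x,v)| \geq C|t-s|^d$ whenever $|t-s|\le\tau$. Fixing $x$ and performing the change of variable $v \mapsto y = \Psi_{s,t}^{(1)}(x,v)$ inside the inner $v$-integral, with $w(x,y):=\Psi_{s,t}^{(2)}(x,v(x,y))$ the induced velocity coordinate, produces
\[
\int |f(\Psi_{s,t}(x,v))|^q \, dv \;\leq\; C^{-1}|t-s|^{-d} \int |f(y,w(x,y))|^q \, dy \;\leq\; C^{-1}|t-s|^{-d}\int \|f(y,\cdot)\|_{L^\infty_v}^q \, dy.
\]
Raising to $1/q$ and taking the supremum in $x$ gives the endpoint bound $\|f\circ\Psi_{s,t}\|_{L^\infty_x L^q_v} \lesssim |t-s|^{-d/q}\|f\|_{L^q_x L^\infty_v}$.

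The linear operator $Tf:=f\circ\Psi_{s,t}$ is thus bounded from $L^q_{x,v}$ to $L^q_{x,v}$ with norm $1$ (by part (\ref{it:disp_stoch1}) applied with $a=q$) and from $L^q_x L^\infty_v$ to $L^\infty_x L^q_v$ with norm $\lesssim |t-s|^{-d/q}$. Complex interpolation of the Riesz-Thorin type on mixed-norm Lebesgue spaces at parameter $\theta = 1 - q/p \in [0,1]$ interpolates the $L^q_v$ exponent in the source from $q$ to $\infty$ and simultaneously the $L^p_x$ exponent in the target from $q$ to $\infty$, yielding $T: L^q_x L^p_v \to L^p_x L^q_v$ with operator norm
\[
\lesssim 1^{1-\theta}\,\bigl(|t-s|^{-d/q}\bigr)^{\theta} \;=\; |t-s|^{-d(1/q-1/p)},
\]
which is exactly the desired estimate.

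The main obstacle I anticipate is the rigorous justification of the change of variable $v \mapsto \Psi_{s,t}^{(1)}(x,v)$: the positivity of the Jacobian only makes this a local diffeomorphism, not necessarily a global one, so the induced $w(x,y)$ may be multi-valued or the image may be a proper subset of $\mathbb{R}^d$. This is precisely why I dominate by the envelope $\|f(y,\cdot)\|_{L^\infty_v}$ at the endpoint step, which absorbs any multiplicity and extends the integration to all of $\mathbb{R}^d_y$; the smoothness of the flow from Assumption \ref{ass: ex_flow} ensures this is valid on a $\mathbb{P}$-full-measure set.
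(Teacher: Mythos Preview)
Your proof is correct and follows essentially the same approach as the paper: part~(\ref{it:disp_stoch1}) by change of variables under the volume-preserving flow, and part~(\ref{it:disp_stoch2}) by first establishing the endpoint $L^q_xL^\infty_v \to L^\infty_xL^q_v$ via the Jacobian lower bound and the $L^\infty_v$-envelope trick, then interpolating with part~(\ref{it:disp_stoch1}). The only difference is the interpolation path: the paper specializes the endpoint to $q=1$, interpolates with $a=1$ to obtain the $L^1_xL^p_v \to L^p_xL^1_v$ bound, and then interpolates again with $a=p$; you instead interpolate in one step between the $L^q_{x,v}$ isometry and the general-$q$ endpoint, which is slightly more direct but otherwise equivalent.
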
	
\begin{proof}
		\eqref{it:disp_stoch1} Using a change of variables we can rewrite
		 \begin{align*}
		\norm{f(\Phi_{s,t}^{-1})}_{L_{x,v}^a}&= \left(\int_{\R ^d}\int_{\R ^d}\left \lvert f( \Phi_{s,t}^{-1}(x,v)) \right \rvert ^a 
		\d v \d x\right)^{\frac{1}{a}} \\&= \left(\int_{\R ^d}\int_{\R ^d}\left \lvert f(x,v)\right \rvert ^a \left \lvert \det D \Phi_{s,t}^{-1}(x,v)\right \rvert	\d v \d x\right)^{\frac{1}{a}}.
	\end{align*}
	Since $\Phi$ is $\mathbb{P}$-almost surely volume preserving, the determinant satisfies $\left \lvert \det D \Phi_{s,t}^{-1}(x,v)\right \rvert = 1$. Thus, we obtain that $\mathbb{P}$-a.s. 
		\begin{align*}
			\ \left(\int_{\R ^d}\int_{\R ^d}\left \lvert f(x,v)\right \rvert ^a \left \lvert \det D \Phi_{s,t}^{-1}(x,v)\right \rvert	\d v \d x\right)^{\frac{1}{a}}
			 = \norm{f}_{L_{x,v}^a}.
		\end{align*}
		\eqref{it:disp_stoch2} Now, let $q \geq 1$. It is enough to prove, that $\mathbb{P}$-a.s.	
		\begin{align}\label{it:disp_stoch2a}
			 \norm{f(\Phi_{s,t}^{-1})}_{L_x^{\infty}L_v^q} \leq C \cdot \vert t-s \vert^{-\frac{d}{q}}\norm{f}_{L_x^qL_v^{\infty}}.
			\end{align} 
			Then, interpolation of \eqref{it:disp_stoch1} with $a = 1 $ and (\ref{it:disp_stoch2a}) with $q=1$ yields	
			\begin{align}\label{it:disp_stoch2b}
		\norm{f(\Phi_{s,t}^{-1})}_{L_x^{p}L_v^1} \leq C \cdot \vert t-s \vert^{-d(1-\frac{1}{p})}\norm{f}_{L_x^1L_v^{p}}.
			\end{align}
		And finally, interpolation of (\ref{it:disp_stoch1}) with $a = p $ and (\ref{it:disp_stoch2b}) yields	
		\begin{align}
			\norm{f(\Phi_{s,t}^{-1})}_{L_x^{p}L_v^q} \leq C \cdot \vert t-s \vert^{-d(\frac{1}{q}-\frac{1}{p})} \norm{f}_{L_x^qL_v^{p}}.
		\end{align}
		It remains to show (\ref{it:disp_stoch2a}). First, bounding the left-hand-side by using the $L^{\infty}$-norm and using a change of variables yields
			\begin{align*}
				\Vert f(\Phi_{s,t}^{-1}(x,v))\Vert_{L_v^q}
				&\leq \left(\int_{\R ^d} \sup_{w \in \R^{d}}\left\lvert f(\Phi_{t,s}(x,v)^{(1)},w)\right\rvert^q dv \right)^{\frac{1}{q}}\\
				&= \left(\int_{\R ^d} \sup_{w \in \R^{d}}\left\lvert f(v,w)\rvert^q \lvert \det D_v (\Phi_{t,s}(x,v)^{(1)}) \right\rvert^{-1} dv \right)^{\frac{1}{q}}.
			\end{align*}		
		Using that $\left\lvert \det D_v (\Phi_{t,s}(x,v)^{(1)}) \right\rvert\geq C \lvert t-s \rvert$ for $\vert t-s\vert \leq \tau$ due to Assumption \ref{ass:disp_local} we get 
		\begin{align*}
			\Vert f(\Phi_{s,t}^{-1}(x,v))\Vert_{L_v^q}
			\leq C\vert t-s \vert ^{-\frac{d}{q}} \Vert f \Vert_{L_x^q L_v^{\infty}}.
		\end{align*}
\end{proof}

\subsection{Strichartz estimates for stochastic kinetic transport}
\begin{lem}[Strichartz estimates]\label{lm: stoch_strich}
	Let $(q,r,p,a)$ and $(\tilde{q},\tilde{r},\tilde{p},a')$ two jointly admissible tuples, $\tau$ as in Assumption \ref{ass:disp_local} or Assumption \ref{ass:disp_glob} and $\tilde{\tau}\leq T$. Then, there exists $C 		\left(\left\lceil\frac{\tilde{\tau}}{\tau}\right\rceil \right)> 0$ depending only on the number of intervals with length up to $\tau$ between $0$ and $\tilde{\tau}$ such that for almost all $\omega$
	\begin{enumerate}
		\item \label{eq_hom lm: stoch_strich} the homogeneous part of the stochastic kinetic transport equation satisfies
		\begin{align*}
			\norm{f(\Psi_{0,t}(x,v))}_{L^r([0,\tilde{\tau}],L_x^pL_v^q)} &\leq C
			\left(\left\lceil\frac{\tilde{\tau}}{\tau}\right\rceil \right)
			\norm{f}_{L_{x,v}^a},
		\end{align*}
		\item \label{eq_inhom lm: stoch_strich}	the inhomogeneous part of the stochastic kinetic transport equation satisfies
		\begin{align*}
			\norm{\int_{0}^t f (s,\Psi_{s,t}(x,v))\d s}_{L^r([0,\tilde{\tau}],L_x^pL_v^q)} \leq &
			C\left(\left\lceil\frac{\tilde{\tau}}{\tau}\right\rceil \right)
			\dorm{f}{\tilde{r}'}{\tilde{p}'}{\tilde{q}'}.
		\end{align*}
	\end{enumerate}
\end{lem}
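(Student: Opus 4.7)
The strategy is to first establish short-time versions of (\ref{eq_hom lm: stoch_strich})--(\ref{eq_inhom lm: stoch_strich}) on any subinterval of length at most $\tau$ via a bilinear $TT^*$ argument, then patch these into a bound on $[0,\tilde{\tau}]$ through a time-splitting procedure that exploits the cocycle property and volume preservation of the stochastic flow.

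For the short-time step, set $T_t f := f\circ \Psi_{0,t}$. By Lemma~\ref{lem: volumepreserving} the $L^2_{x,v}$-adjoint is $T_t^* g = g\circ \Phi_{0,t}$, and the cocycle identity $\Phi_{0,s}\circ \Phi_{t,0} = \Phi_{t,s}$ gives $T_t\,\widetilde{T}_s^{\,*} g = g\circ \Psi_{s,t}$. On any time interval of length at most $\tau$, Lemma~\ref{lem:dispersivdecay}\eqref{it:disp_stoch2} produces a pointwise kernel bound with weight $|t-s|^{-d(1/q-1/p)}=|t-s|^{-2/r}$, where the identity follows from the admissibility relation. The Hardy--Littlewood--Sobolev inequality in the $t$-variable converts this weight into the inhomogeneous Strichartz estimate for the diagonal jointly admissible choice $(\tilde{q},\tilde{r},\tilde{p})=(p',r,q')$; duality yields the homogeneous estimate, and interpolation with the trivial bound $\|Tf\|_{L^\infty_t L^a_{x,v}} = \|f\|_{L^a_{x,v}}$ (again from volume preservation) extends both conclusions to the full range of jointly admissible pairs with $\tilde{a}=a'$.

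For the extension to $[0,\tilde{\tau}]$, partition the interval into $N = \lceil \tilde{\tau}/\tau\rceil$ consecutive subintervals $I_k = [t_k,t_{k+1}]$ of length at most $\tau$. For the homogeneous estimate, the cocycle identity gives $f\circ \Psi_{0,t} = (f\circ \Psi_{0,t_k})\circ \Psi_{t_k,t}$ on $I_k$; applying the short-time bound to the restarted datum $f\circ \Psi_{0,t_k}$ and using $\|f\circ \Psi_{0,t_k}\|_{L^a_{x,v}} = \|f\|_{L^a_{x,v}}$ yields a uniform contribution from each $I_k$, and summing $r$-th powers in $k$ produces the factor $C(N)\lesssim N^{1/r}$. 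For the inhomogeneous estimate, decompose $\int_0^t = \int_{t_k}^t + \sum_{j<k}\int_{I_j}$ for $t\in I_k$: the current-interval term is handled by the short-time inhomogeneous bound, while each past-interval term rewrites as $\bigl(\int_{I_j} f(s,\cdot)\circ\Psi_{s,t_k}\,ds\bigr)\circ \Psi_{t_k,t}$ and is controlled by composing the short-time inhomogeneous bound on $I_j$ (which produces an $L^a_{x,v}$-bound on the bracketed function via duality) with the short-time homogeneous bound on $I_k$.

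The main obstacle is the bookkeeping of constants in the inhomogeneous patching, where past-interval contributions couple the short-time homogeneous and inhomogeneous constants; one must verify that summing in $L^r_t$ keeps the total polynomial in $N$ (of order $N^{1/r}$) rather than exponential. A secondary subtlety is the pathwise nature of the argument when $\tau$ is only a stopping time: the dispersive weight, the $TT^*$-duality, and the volume preservation all hold $\mathbb{P}$-almost surely with deterministic constants, so only the random number of subintervals $\lceil \tilde{\tau}/\tau\rceil$ enters the final estimate.
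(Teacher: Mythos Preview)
Your proposal is correct, and for part~(\ref{eq_hom lm: stoch_strich}) it matches the paper: both arguments run the $TT^*$/Hardy--Littlewood--Sobolev machinery on each subinterval of length $\leq\tau$ and sum, using the cocycle property and volume preservation to restart the datum.

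For part~(\ref{eq_inhom lm: stoch_strich}) you take a genuinely different route. The paper does \emph{not} establish a short-time inhomogeneous estimate and then patch; instead it derives the full-interval inhomogeneous bound directly from the full-interval homogeneous bound~(\ref{eq_hom lm: stoch_strich}) by duality. Concretely, after pairing with a test function $\Phi$ and changing variables so that $\Psi_{s,t}$ becomes $\Phi_{0,s}$ and $\Phi_{0,t}$, H\"older in $(x,v)$ factors the expression as
\[
\Bigl\|\int_{[0,\tilde\tau]}\Phi(t,\Phi_{0,t})\,dt\Bigr\|_{L^{a'}_{x,v}}\cdot\Bigl\|\int_{[0,\tilde\tau]}f(s,\Phi_{0,s})\,ds\Bigr\|_{L^{a}_{x,v}},
\]
and each factor is controlled by the dual of~(\ref{eq_hom lm: stoch_strich}) applied with the appropriate admissible pair. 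This completely sidesteps the cross-term bookkeeping you flag as the main obstacle. Your route also works, but note two points: first, to bound the past-interval piece $\|\int_{I_j}f(s,\Psi_{s,t_k})\,ds\|_{L^a_{x,v}}$ via a \emph{short}-time dual estimate you must insert one more cocycle step, writing $\Psi_{s,t_k}=\Psi_{s,t_{j+1}}\circ\Psi_{t_{j+1},t_k}$ and using volume preservation to remove the second factor, since $|s-t_k|$ can exceed $\tau$; second, the resulting sum is polynomial in $N=\lceil\tilde\tau/\tau\rceil$ but of order roughly $N^{1/r+1/\tilde r}$ rather than $N^{1/r}$. The paper's approach is cleaner and yields a constant $C(N)^2$ directly; yours is more modular, treating the short-time estimates as black boxes.
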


\begin{proof}
	(\ref{eq_hom lm: stoch_strich}): The main difference to the deterministic case is, that we have local in time dispersion in comparison with a fix time horizon. 
	By duality, we rewrite
	\begin{align*}
		\norm{f(\Phi_{0,t}^{-1}(x,v))}_{L^r([0,\tilde{\tau}],L_x^pL_v^q)}
		=	\sup_{\dorm{\phi}{r'}{p'}{q'} \leq 1} \int_{[0,\tilde{\tau}]} 
		\int_{\R^{d}}\int_{\R^{d}}f(\Phi_{0,t}^{-1}(x,v)) \phi(t,x,v) \d v \d x \d t.
	\end{align*}
	Now, let $k(\omega) \in \N_{0}$ such that $k\tau < \tilde{\tau} \leq (k+1)\tau$. Splitting the integral and using change of variables we rewrite
		\begin{align*}
		&\int_{[0,\tilde{\tau}]} 
		\int_{\R^{d}}\int_{\R^{d}}f\left(\Phi_{0,t}^{-1}(x,v)\right) \phi(t,x,v) \d v \d x \d t\\
		&= \sum_{m=0}^{k}\int_{[m\tau,(m+1)\tau]} 
		\int_{\R^{d}}\int_{\R^{d}}f\left(\Phi_{0,t}^{-1}(x,v)\right) \phi(t,x,v) \d v \d x \d t\\
		&=\sum_{m=0}^{k}\int_{[m\tau,(m+1)\tau]} 
		\int_{\R^{d}}\int_{\R^{d}}f\left(\Phi_{m\tau,0}(\Phi_{t,m \tau}(x,v))\right) \phi(t,x,v) \d v \d x \d t\\
		&=\sum_{m=0}^{k}\int_{[m\tau,(m+1)\tau]} 
		\int_{\R^{d}}\int_{\R^{d}}f\left(\Phi_{m\tau,0}(x,v)\right) \phi(t,\Phi_{m\tau,t}(x,v)) \d v \d x \d t.
	\end{align*}
	Moreover, by applying Hölder's inequality and the fact that $\Phi$ is volume preserving, we get
	\begin{align*}
		&\int_{[m\tau,(m+1)\tau]} 
		\int_{\R^{d}}\int_{\R^{d}}f\left(\Phi_{m\tau,0}(x,v)\right) \phi\left(t,\Phi_{m\tau,t}(x,v)\right) \d v \d x \d t\\	
		\leq &\norm{f\left(\Phi_{m\tau,0}(x,v)\right)}_{L_{x,v}^a} \cdot \norm{\int_{[m\tau,(m+1)\tau]}\phi\left(t,\Phi_{m\tau,t}(x,v)\right) \d t}_{L_{x,v}^{a'}}\\
		= & \norm{f}_{L_{x,v}^a} \cdot \norm{\int_{[m\tau,(m+1)\tau]}\phi\left(t,\Phi_{m\tau,t}(x,v)\right) \d t}_{L_{x,v}^{a'}}.
	\end{align*}
	With $a=\frac{2pq}{p+q}$ we find that $a'=\frac{2q'p'}{p'+q'}$. Therefore, with change of variables and Hölder's inequality we obtain
	\begin{align*}
		 \norm{\int_{[m\tau,(m+1)\tau]}\phi(t,\Phi_{m\tau,t}(x,v)) \d t}_{L_{x,v}^{a'}}^2 \leq & \int_{[m\tau,(m+1)\tau]^2}\norm{\phi(t,x,v)\phi(s,\Phi_{s,t})}_{L_{x,v}^{\frac{q'p'}{p'+q'}}} \d t \d s\\
		\leq &	\int_{[m\tau,(m+1)\tau]^2}\zorm{\phi(t)}{p'}{q'} \zorm{\phi\left(s,\Phi_{s,t}\right)}{q'}{p'} \d t \d s.
	\end{align*}
	With Lemma \ref{lem:dispersivdecay} the norm $\zorm{\phi\left(s,\Phi_{s,t}\right)}{q'}{p'}$ can be bounded by $C\vert t-s \vert ^{-d\left(\frac{1}{q}-\frac{1}{p}\right)}$. Finally, applying the Hardy-Littlewood-Sobolev inequality, we have
	\begin{align*}
	\norm{\int_{[m\tau,(m+1)\tau]}\phi(t,\Phi_{m\tau,t}(x,v)) \d t}_{L_{x,v}^{a'}}^2 \leq & C\cdot \dorm{\phi}{r'}{p'}{q'}^2.
	\end{align*}
	Summing over m gives the desired result.

	(\ref{eq_inhom lm: stoch_strich}): Using duality, change of variables and Hölder's inequality we get
	\begin{align*}
		&\left\Vert \int_{0}^{t} f\left(s, \Phi_{s,t}^{-1}(x,v)\right) \d s \right\Vert_{L_t^r L_x^p L_v^q}\\
		&\le \sup_{\Vert \Phi\Vert_{L_t^{r'} L_x^{p'} L_v^{q'}}\le 1}	
		\left\Vert 
		\int_{[0,\tilde{\tau}]}\Phi\left(t, \Phi_{0,t}(x,v)\right) \d t
		\right\Vert_{L_{x,v}^{a'}}
		\cdot	\left\Vert \int_{[0,\tilde{\tau}]} f(s, \Phi_{0,s}(x,v)) \d s \right\Vert_{L_{x,v}^{a}}.
	\end{align*}
	By duality and (\ref{eq_hom lm: stoch_strich}) we calculate
	\begin{align*}
		\left \lVert\int_{[0,\tilde{\tau}]} \Phi\left(t, \Phi_{0,t}(x,v)\right)\ \d t \right\rVert_{L_{x,v}^{a'}}
		\le 	&\sup_{\Vert\Psi\Vert_{L_{x,v}^{a}}\le 1} 
		\Vert \Phi \Vert_{L_t^{r'} L_x^{p'} L_v^{q'}}
		\cdot
		\left\Vert \Psi(t, \Phi_{0,t}^{-1}, x, v) \right\Vert_{L_t^r L_x^p L_v^q}\\
		\le	& C\left(\left\lceil\frac{\tilde{\tau}}{\tau}\right\rceil \right)\Vert \Phi \Vert_{L_t^{r'} L_x^{p'} L_v^{q'}},
	\end{align*}
	and 
	\begin{align*}
		\left\lVert\int_{[0,\tilde{\tau}]} f\left(s, \Phi_{0,s}(x,v)\right)\ \d s	\right\rVert_{L_{x,v}^{a}}
		&\le	\sup_{\Vert\theta\Vert_{L_{x,v}^{a'}}\le 1} 
		\Vert f\Vert_{L_t^{\tilde{r}'} L_x^{\tilde{p}'} L_v^{\tilde{q}'}}
		\cdot
		\left\Vert 
		\theta(s, \Phi_{0,s}^{-1}, x, v) 
		\right\Vert
		_{
			L_t^{\tilde{r}} L_x^{\tilde{p}} L_v^{\tilde{q}}			}\\
		&\leq C
		\left(\left\lceil\frac{\tilde{\tau}}{\tau}\right\rceil \right)\Vert f\Vert_{L_t^{\tilde{r}'}L_x^{\tilde{p}'}L_v^{\tilde{q}'}}.
	\end{align*} 
This implies
	\begin{align*}
		\left\Vert \int_{0}^{t} f\left(s, \Phi_{s,t}^{-1}(x,v)\right) \d s \right\Vert_{L_t^r L_x^p L_v^q}	
		\le	 C\left(\left\lceil\frac{\tilde{\tau}}{\tau}\right\rceil \right)\Vert f \Vert_{L_t^{\tilde{r}'} L_x^{\tilde{p}'} L_v^{\tilde{q}'}}.
	\end{align*}
\end{proof}

\section{Different types of external random force}\label{sec:dispersion}
In this section, we will first examine conditions under which Assumption \ref{ass:disp_local} is satisfied for the stochastic drift driven by $\sigma^k$. Subsequently, we will provide several examples and counterexamples of functions $\sigma^k$ to illustrate scenarios where dispersion holds globally in time, i.e., where Assumption \ref{ass:disp_glob} is fulfilled. Consider the solution $\Phi_{s,t}(x,v) = (X_t,V_t)$ of \eqref{SDE: stoch. flow_intro} with initial condition $X_s = x$ and	$V_s = v$. Our aim is to analyze the dispersive behavior of the stochastic flow $\Phi_{s,t}(x,v) = (X_t,V_t)$.

\subsection{External random force that allows for local in time dispersion}\label{subsec:dispersion_local}
In this section, we will show that under suitable regularity conditions on $\sigma^k$ Assumption \ref{ass:disp_local} is satisfied. 
\begin{lem}\label{lem: assumption_local_disp}
	Let $\sigma^k \in C^3(\R^{2d})$ with $\div_v \sigma^k = 0$ for all $k$ such that $$\sum_k\left(\sum_{|\alpha| \leq 3} \left\lVert D_v^{\alpha}\sigma^k\right\rVert_{\infty} +\sum_{|\alpha| \leq 2} \left\lVert D_x^{\alpha}\sigma^k\right\rVert_{\infty}\right)< \infty.$$ Then, Assumption \ref{ass:disp_local} is fulfilled. 
\end{lem}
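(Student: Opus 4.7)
The key identity is
\[
D_v \Phi_{s,t}^{(1)}(x,v) = D_v X_t = \int_s^t D_v V_r \, \d r,
\]
obtained by differentiating the integral representation $X_t = x + \int_s^t V_r \,\d r$ in $v$. Since $D_v V_r \vert_{r=s} = I$, for small $|r-s|$ we expect $D_v V_r \approx I$ and hence $D_v X_t \approx (t-s) I$, which yields the desired determinant bound.

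To make this quantitative, I would first invoke Kunita's regularity theorems (\cite[Theorems 3.4.1 and 4.6.5]{K1997}): the summability and boundedness of the derivatives of $\sigma^k$ place the local characteristic in a class $B_{ub}^{m,\delta}$ with $m+\delta$ large enough for $\Phi_{s,t}$ to be a $C^2$-diffeomorphism with moment bounds on its spatial derivatives uniform in $(s,t,x,v)$. Differentiating the Stratonovich SDE \eqref{SDE: stoch. flow_intro} in $v$ gives
\[
D_v V_r = I + \sum_k \int_s^r \bigl[(D_x\sigma^k)(X_u,V_u)\, D_v X_u + (D_v\sigma^k)(X_u,V_u)\, D_v V_u\bigr] \circ \d\beta^k_u.
\]
Converting to It\^o form (the Stratonovich correction is bounded by $D_x^2\sigma^k$, $D_v^2\sigma^k$ and the mixed second derivatives, which are finite under the hypotheses) and applying BDG with Gronwall's inequality yields, for every $p \geq 2$ and uniformly in $(s,x,v)$,
\[
\mathbb{E}\sup_{r \in [s, s+h]} \|D_v V_r - I\|^p \leq C_p\, h^{p/2}.
\]
A Kolmogorov-type continuity argument in the parameters $(s,t,x,v)$, relying crucially on the uniformity in $(x,v)$ of these moment bounds, produces a continuous modification of $(s,t,x,v)\mapsto D_vV_t^{s,(x,v)} - I$ that vanishes on the diagonal $\{t=s\}$. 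I then define
\[
\tau(\omega) \defeq \inf\Bigl\{h>0 \,:\, \sup_{|t-s|\leq h}\;\sup_{(x,v)\in\R^{2d}} \|D_v V_t^{s,(x,v)}(\omega)-I\| > \tfrac12\Bigr\}\wedge T,
\]
which is a $\mathbb{P}$-a.s.\ strictly positive stopping time. For $|t-s| \leq \tau(\omega)$, writing $D_v X_t = (t-s) I + \int_s^t (D_v V_r - I)\, \d r = (t-s)(I + A_{s,t,x,v})$ with $\|A_{s,t,x,v}\| \leq 1/2$, every eigenvalue of $I+A_{s,t,x,v}$ has modulus at least $1/2$, so $|\det D_v\Phi_{s,t}^{(1)}(x,v)| \geq 2^{-d}|t-s|^d$, verifying Assumption \ref{ass:disp_local}.

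The main obstacle is obtaining the \emph{uniform-in-$(x,v)$} smallness of $D_v V_r - I$ required to define a single stopping time $\tau$ that works simultaneously for all initial points, rather than a pointwise one. This is where the strong regularity hypothesis (boundedness of $\sigma^k$ derivatives up to third order in $v$ and second order in $x$) is essential: it is precisely what Kunita's framework requires so that the moment estimates on the tangent flow carry constants independent of the initial datum, and hence the Kolmogorov/Garsia--Rodemich--Rumsey upgrade from moment bounds to an almost-sure modulus of continuity produces a $\mathbb{P}$-null exceptional set independent of $(s,t,x,v)$.
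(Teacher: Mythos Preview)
Your proof is correct and follows essentially the same strategy as the paper: verify Kunita's regularity conditions so that the flow is differentiable, write out the variational equation for $D_v\Phi_{s,t}$, apply BDG to obtain $O(|t-s|^{1/2})$-type moment bounds on the remainder, invoke Kolmogorov's continuity theorem for a pathwise H\"older estimate, and conclude that $D_v\Phi_{s,t}^{(1)}=(t-s)(I+o(1))$. Your endgame via the eigenvalue bound $\|A\|\le \tfrac12 \Rightarrow |\det(I+A)|\ge 2^{-d}$ is somewhat more direct than the paper's appeal to the matrix perturbation result \cite[Theorem 8.1]{MN1988}, and you explicitly flag the uniformity-in-$(x,v)$ issue that the paper handles only implicitly.
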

\begin{proof}
	Our strategy of the proof is a perturbative approach since we know that the deterministic equation always satisfies Assumption \ref{ass:disp_glob} and consequently, Assumption \ref{ass:disp_local}. 
	Since the SDE \eqref{SDE: stoch. flow_intro} is in Stratonovich-form the backward and forward stochastic flow are the same. However,
 it is more convenient to work with the equation in It\^o formulation. Thus, the integral equation is given by
		\begin{align*}
		\begin{pmatrix}
			X_t\\
			V_t
		\end{pmatrix}
		=& 		\begin{pmatrix}
			x\\
			v
		\end{pmatrix}
		+ \int_s^t\begin{pmatrix}
			V_r\\
			0
		\end{pmatrix}
		\d r + \sum_{k}\int_s^t 
		\begin{pmatrix}
			0\\
			\sigma^k(X_r, V_r)
		\end{pmatrix}
		\circ \d \beta^k\\
		=& 		\begin{pmatrix}
			x\\
			v
		\end{pmatrix}
		+ \int_s^t\begin{pmatrix}
			V_r\\
			0
		\end{pmatrix}
		\d r + \sum_{k}\int_s^t 
		\begin{pmatrix}
			0\\
			\sigma^k(X_r, V_r)
		\end{pmatrix}
		\d \beta^k_r\\ 
		&+ \frac{1}{2} \sum_k \int_s^t \begin{pmatrix}
			0\\
			\sum_{j=1}^d\begin{pmatrix}
				\frac{\partial \sigma^k_i(X_r, V_r)}{\partial v_j}\sigma^k_j (X_r, V_r)
			\end{pmatrix}_{i=1,\dots,d}
		\end{pmatrix}
		\d r.
	\end{align*}
	
	Thus, defining $\Phi_{s,t}(x,v):= (X_t,V_t)$ we rewrite,
	\begin{align*}
		\Phi_{s,t}(x,v)=& 		\begin{pmatrix}
			x + (t-s) v\\
			v
		\end{pmatrix}
		+ \begin{pmatrix}
			\int_s^t \sum_{k}\int_s^r 
			\sigma^k(\Phi_{s,u}(x,v))
			\d \beta^k_u \d r\\
			0
		\end{pmatrix}\\
		&+ \begin{pmatrix}
			\int_s^t \frac{1}{2} \sum_k \int_s^r \sum_{j=1}^d\begin{pmatrix}
				\frac{\partial \sigma^k_i \left(\Phi_{s,u}(x,v)\right)}{\partial v_j}\sigma^k_j\left(\Phi_{s,u}(x,v)\right)
			\end{pmatrix}_{i=1,\dots,d}	\d u \d r\\
			0
		\end{pmatrix}\\
		& + \sum_{k}\int_s^t 
		\begin{pmatrix}
			0\\
			\sigma^k\left(\Phi_{s,r}(x,v)\right)
		\end{pmatrix}
		\d \beta^k_r\\
		&+ \frac{1}{2} \sum_k \int_s^t \begin{pmatrix}
			0\\
			\sum_{j=1}^d\begin{pmatrix}
				\frac{\partial \sigma^k_i\left(\Phi_{s,r}(x,v)\right)}{\partial v_j}\sigma^k_j\left(\Phi_{s,r}(x,v)\right)
			\end{pmatrix}_{i=1,\dots,d}
		\end{pmatrix}
		\d r.\\
	\end{align*}
	First, we aim to demonstrate that $\Phi_{s,t}$ is continuously differentiable with respect to the spatial parameters $x$ and $v$ for all $s,t$. Afterwards, applying Kolmogorov's continuity theorem (\cite{K1997} Theorem 1.4.1), we will show that all resulting terms are $\alpha-$Hölder-continuous for $\alpha < \frac{1}{4}$. Finally, using the fact that the determinant is continuous, we will obtain the desired result.\\
According to Kunita \cite[ Theorem 4.6.5]{K1997} the stochastic flow $\Phi_{s,t}$ is continuously differentiable with respect to the spatial variables $x$ and $v$, given that its local characteristic belongs to the class $B_{ub}^{1,1}$. To be specific, let us consider the local characteristic $(a(x,v,y,u),b(x,v),A_t)$ of the semimartingale 
	\begin{align*}
			F(t,x,v)
			= 	
			\int_s^t\begin{pmatrix}
					v\\
					\frac{1}{2} \displaystyle{\sum_k	\sum_{j=1}^d}
					\begin{pmatrix}
							\frac{\partial \sigma^k_i(x,v)}{\partial v_j}\sigma^k_j(x,v)\\
						\end{pmatrix}_{i=1,\dots,d}
				\end{pmatrix}
			\d r
			+ \sum_{k}\int_s^t 
			\begin{pmatrix}
					0\\
					\sigma^k(x,v)
				\end{pmatrix}
			\d \beta^k_r.
		\end{align*}
This local characteristic is defined by
	\begin{align*}
		&(a(x,v,y,u),b(x,v),A_t) 
		=\\	& 
		\left[\begin{pmatrix}
			&0&0\\
			&0&\begin{pmatrix}
				\displaystyle{\sum_{k}}\sigma^k_i(x,v)\sigma^k_j(y,u)
			\end{pmatrix}_{i,j \in \{1,\dots, d\}}
		\end{pmatrix},
		\begin{pmatrix}
			v\\
			\frac{1}{2} 
			\displaystyle{\sum_k \sum_{j=1}^d}
			\begin{pmatrix}
				\frac{\partial \sigma^k_i(x,v)}{\partial v_j}\sigma^k_j(x,v)\\
			\end{pmatrix}_{i=1,\dots,d}
		\end{pmatrix},t 
		\right].
	\end{align*}
	It belongs to the class $B_{ub}^{1,1}$ because the norms $\norm{a}_{1+1}^{\sim} $ and $\norm{b}_{1+1}$ are bounded. Specifically, 
we obtain
	\begin{align*}
		\norm{a}_{1}^{\sim}
		\leq 	& \sum_k \max_{i,j}\sup_{(x,v),(y,u)}\frac{\left \vert \sigma^k_i (x,v)\sigma^k_j(y,u) \right \vert}{(1+\vert (x,v)\vert)(1+\vert (y,u)\vert)}\\
		&+ \sum_k \max_{i,j}\sup_{(x,v),(y,u)}\sum_{\vert \alpha \vert = 1 }\left \vert D_{x,v}^{\alpha}\sigma^k_i (x,v)D_{y,u}^{\alpha}\sigma^k_j(y,u) \right \vert\\
		\leq & \sum_k \max_{i} \sup_{(x,v)}\left \vert \sigma^k_i(x,v) \right \vert^2
		+ \sum_k \max_{i} \sup_{(x,v)} \sum_{\vert \alpha \vert = 1 } \left \vert
		D_{x,v}^{\alpha}\sigma^k_i (x,v) \right \vert^2. 
	\end{align*}
Moreover,
we calculate 
	\begin{align*}
		&\norm{D_{x,v}^{\alpha}D_{y,u}^{\alpha} a }_1^{\sim} \\&= \sum_k \sup_{(x,v)\neq (x',v'),(y,u) \neq (y',u')} \frac{\left \vert D^{\alpha}_{x,v}\sigma^k_i(x,v)
			-D^{\alpha}_{x,v}\sigma^k_i(x',v')\right \vert \left \vert D^{\alpha}_{y,u}\sigma^k_i(y,u)
			-D^{\alpha}_{y,u}\sigma^k_i(y',u')\right \vert}{\vert (x,v)-(x',v')\vert \vert (y,u)-(y',u')\vert }\\
		&\leq \sum_k\left( \max_i \sup_{(x,v)\neq (x',v')}\frac{\left \vert D^{\alpha}_{x,v}\sigma^k_i(x,v)
			-D^{\alpha}_{x,v}\sigma^k_i(x',v')\right \vert}{\vert (x,v)-(x',v')\vert}\right)^2.
	\end{align*}
For the norm $\norm{b}_{1}$ we get
	\begin{align*}
		\norm{b}_{1}
		\leq&\sum_k \sum_j \left(\max_i \sup_{(x,v)}\left \vert \frac{\partial \sigma^k_i(x,v)}{\partial v_j} \right\vert \right)\left(\max_i\sup_{(x,v)} \vert \sigma^k_i \vert\right) \\
		&+ \sum_k \sum_j \sum_{\vert \alpha \vert = 1 } \left(\max_i\sup_{(x,v)} \left\vert D^{\alpha}\frac{\partial \sigma^k_i}{\partial v_j}\right \vert\right)\left(\max_i\sup_{(x,v)} \left\vert \sigma^k_i (x,v)\right\vert\right)\\
		& + \sum_k \sum_j \sum_{\vert \alpha \vert = 1 } \left(\max_i\sup_{(x,v)}
		\left\vert \frac{\partial \sigma^k_i }{\partial v_j }\right\vert\right)\left(\max_i \sup_{(x,v)} \left\vert D^{\alpha}\sigma^k_i (x,v)\right\vert\right), 
	\end{align*}
		and we obtain 
	\begin{align*}
			\frac{\vert D^{\alpha} b(x,v) - D^{\alpha}b(y,u)\vert}{\vert (x,v) - (y,u)\vert} \leq &\left\vert D^{\alpha}\left(\frac{\partial \sigma^k_i(x,v)}{\partial_{v_j}}\right)\right\vert \frac{\vert \sigma^k_j(x,v)- \sigma^k_j (y,u)\vert}{\vert(x,v)-(y,u)\vert} \\
		&+ \left\vert \sigma^k_j (y,u)\right \vert \frac{\left \vert D^{\alpha}\left(\frac{\partial \sigma^k_i(x,v)}{\partial{v_j}}\right)-D^{\alpha}\left(\frac{\partial \sigma^k_i(y,u)}{\partial{u_j}}\right)\right\vert }{{\vert (x,v) - (y,u)\vert}}\\
		&+ \left\vert \left(\frac{\partial \sigma^k_i(x,v)}{\partial{v_j}}\right)\right\vert \frac{\vert D^{\alpha}\sigma^k_j(x,v)- D^{\alpha}\sigma^k_j (y,u)\vert}{\vert(x,v)-(y,u)\vert} \\
		&+ \left \vert D^{\alpha}\sigma^k_j (y,u)\right\vert\frac{\left\vert\left(\frac{\partial \sigma^k_i(x,v)}{\partial{v_j}}\right)-\left(\frac{\partial \sigma^k_i(y,u)}{\partial{u_j}}\right)\right\vert }{{\vert (x,v) - (y,u)\vert}}.
	\end{align*}
	All these terms are bounded because $\sigma^k$ and its derivatives are bounded and Lipschitz continuous. Therefore, by Kunita's theorem \cite[Theorem 4.6.5]{K1997} $\Phi_{s,t}$ is continuously differentiable with respect to the spatial parameters $x,v$ for all $s,t$. \\
	Let $i=1, \dots, 2d$, $l=1,\dots d$ and $y_m := \begin{cases} x_m & m \leq d \\ v_{m-d} & m > d \end{cases}.$ Then, the differentiation is given by
\begin{align}\label{lem: assumption_local_disp:eq:differentiation}
		\frac{\partial \Phi_{s,t}^i(x,v)}{\partial v_l} =
		\begin{cases}
		 (t-s)\left(\delta_{li} + \frac{1}{t-s}\int_s^t\Delta_v(l,i)\right)\d u & i =1,\dots d \\
		 \delta_{li} + \Delta_v(l,i) & i=d+1\dots 2d\\
		\end{cases}
\end{align}
	with remainders
	\begin{align*}
		&\Delta_v(l,i)\\ &= \sum_k \sum_{m=1}^{2d}\int_s^u \frac{\partial\sigma^k_i}{\partial y_m}(\Phi_{s,r})\frac{(\partial \Phi_{s,r})_m}{\partial v_l}(x,v) \d \beta^k_r
		\\ 
		&+ \frac{1}{2}\sum_k\sum_{j=1}^{d}\sum_{m=1}^{2d}\int_s^u \left(\left(\frac{\partial^2\sigma^k_i}{\partial y_m\partial v_j}(\Phi_{s,r}) \sigma^k_j(\Phi_{s,r})+\frac{\partial\sigma^k_i}{\partial v_j}(\Phi_{s,r})\frac{\partial \sigma^k_j}{\partial y_m}(\Phi_{s,r})\right)\frac{(\partial \Phi_{s,r})_m}{\partial v_l}(x,v)\right)\d r .
	\end{align*}
	To bound the stochastic integrals from above we use the Burkholder-Davis-Gundy inequality combined with the regularity assumptions on $\sigma^k$.  More precisely, there exists $C$ independent of x,v such that for all s,u
	\begin{align*}
		&\mathbb{E}\left(\left |\int_s^u\frac{\partial\sigma^k_i}{\partial y_m}(\Phi_{s,r})\frac{(\partial \Phi_{s,r})_m}{\partial v_l}(x,v) \d \beta^k_r\right |^4\right)\\
		&\leq \mathbb{E}\left(\left(\int_s^u\left(\frac{\partial\sigma^k_i}{\partial y_m}(\Phi_{s,r})\frac{(\partial \Phi_{s,r})_m}{\partial v_l}(x,v) \right)^2\d r\right)^2\right)\\
		&\leq (u-s)^2 \mathbb{E}\left(\sup_{r\in [s,u]}\left \lVert \frac{\partial\sigma^k_i}{\partial y_m}(\Phi_{s,r})\right \rVert_{\infty}^4 \left \lVert \frac{(\partial \Phi_{s,r})_m}{\partial v_l}(x,v)\right \rVert_{\infty}^4 \right) \leq C (u-s)^2
	\end{align*}
	and analogously there exists $C$ independent of x,v such that for all s,u
	\begin{align*}
		&\mathbb{E}\left(\left |\int_s^u\frac{\partial\sigma^k_i}{\partial y_m}(\Phi_{s,r})\frac{(\partial \Phi_{s,r})_m}{\partial x_l}(x,v) \d \beta^k_r\right |^4\right)
		\leq C (u-s)^2.
	\end{align*}
Thus, by Kolmogorov's continuity theorem, we establish that all stochastic integrals are $\alpha-$Hölder continuous with $\alpha < \frac{1}{4}$. Therefore, there exists $C_1(\omega)$ such that all stochastic terms are bounded by $C_1(\omega)|t-s|^{\alpha}.$
Thanks to the regularity assumptions on $\sum_k \sigma^k$ we can also handle the deterministic part. More precisely, there exists $C_2(\omega)$ such that the deterministic part is bounded by $C_2(\omega)|t-s|$.
Thus, there exists a constant $C(\omega)$, such that  
	\begin{align}\label{lem: assumption_local_disp:eq:hoelder_continuity}
		\lVert \Delta_v(l,i)\rVert \leq C(\omega) |t-s|^{\alpha} + C(\omega)|t-s|.
	\end{align}
Therefore, for $|t-s|$ small enough, where the smallness might depend on $C(\omega)$ the values of the remainder $\lVert \Delta_v\rVert$ becomes arbitrary small. Thus, by \cite[Theorem 8.1]{MN1988} $\det(E_d+\Delta_v)$ can be approximated by $1+\tr{(\Delta_v)}$.
Consequently, there exists a constant $C$ independent of $\omega$  and a $\mathbb{P}$-almost surely positive stopping-time $\tau(\omega)$ such that for all $|t-s| \leq \tau$ we have
	\begin{align*}
		\left|\det D_v \Phi_{s,t}(x,v)^{(1)}\right|& = \left|\det (t-s)\left(E_d+\Delta_v\right)\right| \geq C |t-s|^d. 
	\end{align*} 
\end{proof}

\begin{remark}\label{rem:large_dev}
 Kolmogorov's continuity theorem gives in addition, that for all $\delta > 0$ there exists a deterministic constant $K$ such that 
	\begin{align*}
		\mathbb{P}\left(\left \vert\int_s^u\frac{\partial\sigma^k_i}{\partial y_m}(\Phi_{s,r})\frac{(\partial \Phi_{s,r})_m}{\partial v_l}(x,v) \d \beta^k_r\right \vert \leq K \vert u-s \vert ^{\alpha}\right) \geq 1-\delta.
	\end{align*}
Consequently, there exists a constant $C$ and $\tau$ independent of $\omega$ such that we conclude
	\begin{align*}
		\mathbb{P}\left(\left|\det D_v \Phi_{s,t}(x,v)^{(1)}\right|\geq C |t-s|^d, \text{ for all } \vert t - s\vert \leq \tau\right) \geq 1-\delta.
	\end{align*}
\end{remark}

\begin{remark}
	The above lemma indicates that we require the drift coefficients $\sigma^k$ to be bounded in $x$ and $v$ up to the second derivative, with Lipschitz continuity in $v$. However, it may be sufficient to consider coefficients $\sigma^k \in C^{2+\varepsilon }$ for $\varepsilon >0$, to ensure that Assumption \ref{ass:disp_local} is satisfied.
\end{remark}

\subsection{External random force that allows for global in time dispersion}\label{subsec: dispersion_glob}
Under special conditions on the coefficients $\sigma^k$ the stochastic perturbation does not influence the dispersive character of the deterministic kinetic transport equation. We present here some classes of affine linear functions $\sigma^k$ that satisfy Assumption \ref{ass:disp_glob}.

\begin{lem}\label{ex:glob_disp}
		Let $\sigma^k \in C^1(\R^{2d})$ with $\div_v \sigma^k = 0$ for all $k$ such that $$\sum_k\left(\sum_{|\alpha| = 1} \left\lVert D_v^{\alpha}\sigma^k\right\rVert_{\infty}\right)< \infty.$$ If furthermore, one of the following conditions is fulfilled, then Assumption \ref{ass:disp_glob} is satisfied for all $\tau \in (0,\infty)$. \begin{enumerate}
		\item\label{it:glob_disp1} $ \sigma^k (x,v) = \sigma^k(v)$ is affine linear and $\exists N \in \N$ such that $\sigma^k \equiv 0$ for all $k$ greater than $N$ and 
		\begin{align*}
			\Sigma_2& = 		\begin{pmatrix}		
				\partial_{v^1} \sum_{k=1}^{N} \sigma^k_1(\Phi_{s,t}(x,v)) \circ d\beta^k_t 	& \cdots& \partial_{v^d} \sum_{k=1}^{N} \sigma^k_1(\Phi_{s,t}(x,v)) \cdot \circ d\beta^k_t	 \\
				\vdots	& \ddots	& \vdots 		\\
				\partial_{v^1} \sum_{k=1}^{N} \sigma^k_d(\Phi_{s,t}(x,v))\circ d\beta^k_t	& \cdots& \partial_{v^d} \sum_{k=1}^{N} \sigma^k_d(\Phi_{s,t}(x,v)) \cdot \circ d\beta^k_t 
			\end{pmatrix}\\
			& = 		\begin{pmatrix}		
				\sum_{k=1}^{N} \mu_{11}^{(k)}\circ d\beta^k_t 	& \cdots& \sum_{k=1}^{N} \mu_{1d}^{(k)}\circ d\beta^k_t	 \\
				\vdots	& \ddots	& \vdots 		\\
				\sum_{k=1}^{N} \mu_{d1}^{(k)}\circ d\beta^k_t	& \cdots& \sum_{k=1}^{N} \mu_{dd}^{(k)} \circ d\beta^k_t 
			\end{pmatrix}
		\end{align*} is nilpotent.
			\item\label{it:glob_disp2} $ \sigma^k (x,v) = \sigma^k(v)$ is affine linear and $\Sigma_2^{(k)}$ is diagonal for all $k$. 
			\item\label{it:glob_disp3} $ \sigma_1 (x,v) = \sigma_1(v)$ is affine linear and $\sigma^k \equiv 0$ for all $k$ greater than 1 and 
		\begin{align*}
			\Sigma_2& = 	\begin{pmatrix}		
				\mu_{11}\circ d\beta^1_t 	& \cdots& 	\mu_{1d}\circ d\beta^1_t 	 \\
				\vdots	& \ddots	& \vdots 		\\
				\mu_{d1}\circ d\beta^1_t 	& \cdots& 	\mu_{dd}\circ d\beta^1_t 
			\end{pmatrix}
		\end{align*} is equivalent to a Matrix in Jordan normal form with real eigenvalues.
		\end{enumerate}
\end{lem}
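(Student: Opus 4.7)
My plan is to exploit the affine linearity of $\sigma^k$ in $v$ together with the absence of $x$-dependence in all three cases. Writing $\sigma^k(v)=\mu^{(k)}v+c^{(k)}$, the divergence-free condition $\div_v\sigma^k=0$ becomes $\tr\mu^{(k)}=0$. Since $V_t$ is independent of $x$, the Jacobian $Y_t\defeq D_vV_t$ solves the matrix-valued linear Stratonovich SDE
\begin{align*}
\d Y_t=\sum_k\mu^{(k)}\,Y_t\circ\d\beta^k_t,\qquad Y_s=I_d,
\end{align*}
uniformly in $(x,v)$, and $D_v\Phi_{s,t}(x,v)^{(1)}=D_vX_t=\int_s^tY_r\,\d r$. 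So I have reduced the problem to bounding $\bigl|\det\int_s^tY_r\,\d r\bigr|$ from below by $C|t-s|^d$ path by path, and I would treat each of the three structural hypotheses on $\{\mu^{(k)}\}$ separately.

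In case \eqref{it:glob_disp1}, I would interpret ``$\Sigma_2$ nilpotent'' as saying that the family $\{\mu^{(k)}\}_{k=1}^N$ is simultaneously strictly upper triangular in some common basis (for instance by Engel's theorem applied to the Lie algebra they generate). In such a basis the Stratonovich differential $\sum_k\mu^{(k)}Y_t\circ\d\beta^k_t$ is itself strictly upper triangular whenever $Y_t$ is upper triangular, so starting from $Y_s=I$, path by path $Y_r$ remains upper triangular with diagonal entries identically $1$. Therefore $\int_s^tY_r\,\d r$ is upper triangular with constant diagonal $t-s$, and its determinant is exactly $(t-s)^d$.

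For case \eqref{it:glob_disp2}, diagonal matrices commute pairwise, so I can solve the SDE explicitly as $Y_t=\exp\bigl(\sum_k\mu^{(k)}(\beta^k_t-\beta^k_s)\bigr)$, which is diagonal with $i$-th entry $\exp(B^i_t)$, where $B^i_t\defeq\sum_k\mu^{(k)}_{ii}(\beta^k_t-\beta^k_s)$ (the summability of $\sum_k\|D_v\sigma^k\|_\infty$ ensures a.s.\ convergence). I then obtain $\det\int_s^tY_r\,\d r=\prod_{i=1}^d\int_s^te^{B^i_r}\,\d r$, apply Jensen's inequality to each factor to get $\int_s^te^{B^i_r}\,\d r\ge(t-s)\exp\bigl(\tfrac{1}{t-s}\int_s^tB^i_r\,\d r\bigr)$, and sum the exponents using $\sum_iB^i_r=\sum_k\tr(\mu^{(k)})(\beta^k_r-\beta^k_s)=0$; the result is the clean bound $(t-s)^d$. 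In case \eqref{it:glob_disp3}, similarity invariance of the determinant lets me pass to a basis in which $\mu\defeq\mu^{(1)}$ is in real Jordan form $\bigoplus_i(\lambda_iI_{d_i}+N_i)$; each block of $Y_t=\exp(\mu(\beta^1_t-\beta^1_s))$ is upper triangular with constant diagonal $e^{\lambda_i(\beta^1_t-\beta^1_s)}$, so $\det\int_s^tY_r\,\d r=\prod_i\bigl(\int_s^te^{\lambda_i(\beta^1_r-\beta^1_s)}\,\d r\bigr)^{d_i}$, and Jensen combined with the trace identity $\sum_id_i\lambda_i=\tr\mu=0$ once more yields the lower bound $(t-s)^d$.

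The main obstacle I expect is interpretational rather than computational: I need to pin down what it means for the formal matrix $\Sigma_2$ of Stratonovich differentials to be ``nilpotent'' in case \eqref{it:glob_disp1}, and justify reducing this to simultaneous upper triangularity of $\{\mu^{(k)}\}$. Once that is settled, each of the three cases reduces to explicit integration of a linear Stratonovich SDE with commuting or simultaneously triangulable coefficients, followed by a single application of Jensen's inequality. The conceptual message common to all three cases is that the divergence-free condition $\tr\mu^{(k)}=0$ is exactly the trace identity required for Jensen to close without producing an $\omega$-dependent factor, which is precisely what makes the constant $C$ in Assumption \ref{ass:disp_glob} independent of $\omega$.
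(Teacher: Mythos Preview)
Your proposal is correct and follows essentially the same route as the paper: both reduce to computing $D_v\Phi_{s,t}^{(1)}=\int_s^t Y_r\,\d r$ for the matrix-valued solution $Y$ of the linear Stratonovich SDE, and then bound the determinant case by case using the structure of the $\mu^{(k)}$ together with the trace-zero condition.

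The only differences are cosmetic in cases \eqref{it:glob_disp2} and \eqref{it:glob_disp3}, where the paper writes the explicit exponential $Y_r=\exp\bigl(\sum_k\mu^{(k)}(\beta^k_r-\beta^k_s)\bigr)$ and applies H\"older's inequality $\prod_i\int g_i\ge\bigl(\int\prod_i g_i^{1/d}\bigr)^d$ in place of your Jensen step; the two arguments are equivalent. In case \eqref{it:glob_disp1} your argument is actually a bit cleaner than the paper's: the paper also writes the exponential formula (which strictly speaking requires the $\mu^{(k)}$ to commute) and then argues that $D=\int_s^t\bigl(Y_r-I\bigr)\d r$ is nilpotent as a ``sum of nilpotent matrices'', whence $\det\bigl((t-s)I-D\bigr)=(t-s)^d$ via the characteristic polynomial. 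Your simultaneous strict upper-triangularization bypasses both of these steps, since it makes $Y_r$ upper triangular with unit diagonal directly from the SDE, without ever invoking the closed-form exponential. Your caveat about the precise meaning of ``$\Sigma_2$ nilpotent'' is well placed; the natural reading---every linear combination $\sum_k a_k\mu^{(k)}$ is nilpotent---does indeed yield simultaneous strict triangularizability (this is a classical linear-algebra fact, not requiring the full Lie algebra to be nilpotent), so Engel's theorem is more than you need but certainly sufficient.
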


\begin{proof}
	In all the cases stated here we calculate the stochastic flow explicitly. Since $\sigma^k(x,v) = \sigma^k(v)$ is affine linear for all $k$ we have to solve
	\begin{align}\label{ex:glob_disp_flow}
		\begin{split}
					\d X_t &= V_t \d t,\\
		\d V_t &= \sum_k \underbrace{
			\begin{pmatrix}
			 \mu_{11}^{(k)} \circ \d \beta^k & \dots & \mu_{1d}^{(k)} \circ \d \beta^k \\
				\vdots & \ddots & \vdots \\
				 \mu_{d1}^{(k)} \circ \d \beta^k & \dots & \mu_{dd}^{(k)} \circ \d \beta^k
			\end{pmatrix}
		}_{=:\Sigma_2^{(k)}} V_t
		+
		\begin{pmatrix}
			\sum_{k} c_{1}^{(k)} \circ \d \beta^k \\
			\vdots \\
			\sum_{k} c_{d}^{(k)} \circ \d \beta^k
		\end{pmatrix}
	\end{split}.
	\end{align}	
	First, note that the corresponding local characteristic is given by 	\begin{align*}
		&(a(x,v,y,u),b(x,v),A_t) 
		=\\	& 
		\left[\begin{pmatrix}
			&0&0\\
			&0&\begin{pmatrix}
				\displaystyle{\sum_{k}}\displaystyle{\sum_{l=1}^d}\mu^{(k)}_{il}v_l\displaystyle{\sum_{m=1}^d}\mu^{(k)}_{jm}u_m
			\end{pmatrix}_{i,j \in \{1,\dots, d\}}
		\end{pmatrix},
		\begin{pmatrix}
			v\\
			\frac{1}{2} 
			\displaystyle{\sum_k \sum_{j=1}^d}
			\begin{pmatrix}
			 \mu^{(k)}_{ij}\displaystyle{\sum_{m=1}^d}\mu^{(k)}_{jm}v_m\\
			\end{pmatrix}_{i=1,\dots,d}
		\end{pmatrix},t 
		\right].
	\end{align*}
	With the boundedness condition and the fact, that either $\sigma^k \neq 0 $ is only true for a finite number of $k$ or $\Sigma_2^k$ is not diagonal, this local characteristic belongs to the class $B_b^{0,1}$.
	Since the matrix-exponential can be calculated in this cases we know that the solution $\Phi_{s,t}(x,v) = (X_t,V_t)$ of the linear inhomogeneous SDE \eqref{ex:glob_disp_flow} is given by
	\begin{align*}
		X_t
		=&x + 	\int_{s}^{t} e^{\sum_{k} (\beta^k_u-\beta_s^k) \Sigma_2^{(k)}} \d u \cdot v \\
		&+	\int_{s}^{t} e^{\sum_{k} (\beta^k_u-\beta_s^k) \Sigma_2^{(k)}} 
		\cdot \int_{s}^{u} e^{-\sum_{k}(\beta^k_r-\beta_s^k) \Sigma_2^{(k)}}
		\cdot 	\begin{pmatrix}
			\sum_k c_1^{(k)} \circ \d \beta^k_r \\ \vdots \\ \sum_k c_d^{(k)} \circ \d \beta^k_r	
		\end{pmatrix} \d u, \\
		V_t
		=&	e^{\sum_{k} (\beta^k_t-\beta^k_s) \Sigma_2^{(k)}} v \\
		&+	e^{\sum_{k} (\beta^k_t-\beta^k_s) \Sigma_2^{(k)}} 
		\cdot \int_{s}^{t} e^{-\sum_{k} (\beta^k_u-\beta^k_s) \Sigma_2^{(k)}}
		\cdot 	\begin{pmatrix}
			\sum_k c_1^{(k)} \circ \d \beta^k_u \\ \vdots \\ \sum_k c_d^{(k)} \circ \d \beta^k_u	
		\end{pmatrix}.
	\end{align*}
	To show global in time dispersion we have to calculate the determinant of 
	\begin{align*}
		D_v\Phi_{s,t}(x,v)^{(1)} = 	\int_{s}^{t} e^{\sum_{k} (\beta^k_u-\beta_s^k) \Sigma_2^{(k)}} \d u
	\end{align*}
	and show that 	\begin{align*}
		\left|\det D_v\Phi_{s,t}(x,v)^{(1)}\right| \geq |t-s|^d.
	\end{align*}
\eqref{it:glob_disp1} To understand the solution, consider first $\sigma^k \equiv \ const \ $ for all $k$. In this case, the matrix $\Sigma_2^{(k)}$ is equal to $0$ and thus, the first components of the Jacobian matrix are given by
		\begin{align*}
			D_v\Phi_{s,t}(x,v)^{(1)} = 	(t-s)E_d.
		\end{align*}
		Consequently, for the determinant, we obtain
		\begin{align*}
			|\det D_v\Phi_{s,t}(x,v)^{(1)}| = 	|t-s|^d.
		\end{align*}
		Now, let us consider the case where $\Sigma_2 \neq 0$ and $\Sigma_2$ is nilpotent. We calculate
		\begin{align*}
			e^{\sum_k (\beta^k_u-\beta_s^k) \Sigma_2^{(k)}}
			& = E_d + \sum_{n=1}^{d-1}\frac{\left(\sum_{k=1}^{N} (\beta^k_u-\beta_s^k)\Sigma_2^{(k)}\right)^n}{n!}.
		\end{align*}
		Thus, we rewrite
		\begin{align*}
			&\det D_v \Phi_{s,t}(x,v)^{(1)} \\
			&= \det \left( (t-s) E_d - D \right)
		\end{align*}
		where $D$ is defined as 
		\begin{align*}
D = \left(-\sum_{n=1}^{d-1}\frac{1}{n!}\int_s^t \left(\sum_{k=0}^{N} \left(\beta^k_u-\beta_s^k\right)\Sigma_2^{(k)}\right)^n \d u\right).
			\end{align*} 
		Since D is the sum of nilpotent matrices, D itself is nilpotent. Thus, $D$ has only $0$ as its eigenvalue. Consequently, the above expression, which is the characteristic polynomial of $D$ simplifies to 	
		\begin{align*}
		 \det \left( (t-s) E_d - D \right) =(t-s)^d. 
	\end{align*}
		We note, provided that $\Sigma_2$ is nilpotent, then $\Sigma_2^{(k)}$ is nilpotent for all $k$. The converse is not generally true, having $\Sigma_2^{(k)}$ is nilpotent for all $k$ does not necessarily imply that $\Sigma_2$ itself is nilpotent.\\ 
\eqref{it:glob_disp2} If we consider a combination of Brownian motions for $N >1$, we can combine matrices which are all diagonal. Consider the system
\begin{align*}
	d X(t) &= V(t) \d t\\
	d V(t) &= 
	\sum_{k}	\underbrace{\begin{pmatrix}
			\lambda^{(k)}_{1} 	& 		& 0 \\
			& \ddots& \\
			0			& 		& \lambda^{(k)}_{d}			
		\end{pmatrix}
	}_{=: \Sigma_2^{(k)}}V(t)\circ \d \beta^k_t 
\end{align*}
with $\sum_{i=1}^{d} \lambda^{(k)}_i = 0$ for all $k$. 
Then, using $d$-times Hölder's inequality, the determinant of the Jacobian matrix is given by
\begin{align*}
	|\det D_v \Phi_{s,t} (x,v)^{(1)}|
	&= 	\prod_{i=1}^{d} \left(	\int_{s}^{t}	\prod_{k}e^{\lambda^{(k)}_{i}(\beta^k_u-\beta^k_s)}\d u \right) \\
	&\ge 	\left(
	\int_{s}^{t} \prod_{i=1}^{d} \left(	\prod_{k}e^{\lambda^{(k)}_{i}(\beta^k_u-\beta^k_s)} \right)^{\frac{1}{d}} \d u
	\right)^d	\\
	&=
	\left(
	\int_{s}^{t} \underbrace{\left(	e^{\sum_{k}\sum_{i=1}^{d} \lambda_i^{(k)} \cdot (\beta^k_u-\beta^k_s)} \right)^{\frac{1}{d}}}_{=1} \d u
	\right)^d	\\
	&=	(t-s)^d.
\end{align*}\\		
\eqref{it:glob_disp3} Let us now consider matrices in Jordan normal form for $N = 1$. If the coefficients are given by a diagonal matrix, then the result is true due to \eqref{it:glob_disp2}. Assume that the matrix $\Sigma_2$ is in Jordan normal form with several Jordan blocks. Then, $\Sigma_2$ can be expressed as
			\begin{align*}
				\Sigma_2 = \begin{pmatrix}
					J_1 & 		& \\
					&\ddots &	\\
					&		& J_n
				\end{pmatrix}
				\text{with }
				J_i = 	\begin{pmatrix}
					\lambda_i 	& 1 	& 	 	& 	\\
					&\ddots & \ddots&	\\ 
					&		& \ddots& 1	\\
					&		&		& \lambda_i
				\end{pmatrix}
			\end{align*}
			with constraint
			\begin{align*}
				\sum_{i=1}^{n} \sum_{j_i=1}^{n_i} \lambda_i = \sum_{i=1}^{n} n_i \lambda_i = 0\text{ and }
				\sum_{i=1}^{n} n_i = d.
			\end{align*}
			Therefore, we rewrite the Jacobian as 
			\begin{align*}
				D_v\Phi_{s,t}(x,v)^{(1)} &= \int_{s}^{t} e^{ (\beta_u-\beta_s) \Sigma_2} \d u = \diag\left(	\int_s^t	e^{ (\beta_u-\beta_s)J_i}\d u\right)_{i=1,\dots n}.
			\end{align*}
Define $A(s,u) \defeq (a_{ij}(s,u))_{i,j \in \{1,\dots,n\}}$ with $a_{ij}(s,u)\defeq \begin{cases}
	(\beta_u-\beta_s)^{j-i}, &j \geq i\\
	0, & j < i
	\end{cases}.$ With this, the matrix exponential of the Jordan blocks above is given by
			\begin{align*}
				&e^{ (\beta_u-\beta_s)J_i}=	e^{(\beta_u-\beta_s)\lambda_i } \cdot A(s,u).		
			\end{align*}
		Consequently, using $d$-times Hölder's inequality again, the determinant of this matrix is given by 
			\begin{align*}
				\det (D_v\Phi_{s,t}(x,v)^{(1)})
				=	\prod_{i=1}^{n} \prod_{j_i=1}^{n_i} \left(\int_{s}^{t} e^{\lambda_i (\beta_u-\beta_s)} \d u\right) \ge			(t-s)^d.
			\end{align*}
Finally, assume that the matrix $\Sigma_2$ is given by $\Sigma_2 = SBS^{-1}$ with $B$ a Jordan matrix. We rewrite
			\begin{align*}
				&e^{(\beta_u-\beta_s)\Sigma_2}
				=	e^{(\beta_u-\beta_s)SBS^{-1}}
				=	Se^{(\beta_u-\beta_s)B } S^{-1}.
			\end{align*}
			Therefore, using the above shown result for a matrix in Jordan normal form with several Jordan blocks we obtain
			\begin{align*}
				\det \left(D_v\Phi_{s,t}(x,v)^{(1)}\right)
				&=	\det S \det \left(\int_s^t e^{(\beta_u-\beta_s)B }\d u\right) \det (S^{-1}) \\
				&= \det \left(\int_s^t e^{(\beta_u-\beta_s)B }\d u\right) \ge			(t-s)^d.
			\end{align*}
\end{proof}

There are also affine linear cases where Assumption \ref{ass:disp_glob} is not fulfilled.
\begin{example} 
	If d = 2, $\sigma_1 (x,v) = \sigma_1(v)$ is affine linear, $\sigma^k \equiv 0\quad \forall k \neq 1 $ and $\det \Sigma_2 > 0$ for 
	\begin{align*}
		\Sigma_2& = 	\begin{pmatrix}		
			\mu_{11}\circ d\beta^1_t & 	\mu_{12}\circ d\beta^1_t 	 \\
			\mu_{21}\circ d\beta^1_t 	& 	-\mu_{11}\circ d\beta^1_t 
		\end{pmatrix}
	\end{align*}
	then Assumption \ref{ass:disp_glob} cannot hold true for all $\tau \in (0,\infty).$
\end{example}

\begin{proof}
	Given a $2$-dimensional matrix $\Sigma_2$ with $\tr(\Sigma_2) = 0$ we can calculate the matrix $\Sigma_2$ to the power of $k$ explicitly. For $k \in \N$ we obtain
	\begin{align*}
		(\Sigma_2)^{2k}	&=	(-1)^k (\det \Sigma_2)^k \cdot E_2,	\\
		(\Sigma_2)^{2k+1}	&=	(-1)^k (\det \Sigma_2)^k \cdot \Sigma_2. 
	\end{align*} 
Using this expression, the matrix exponential $e^{\beta_t \Sigma_2}$ is given by
	\begin{align*}
		&e^{\beta_t \Sigma_2} 	\\
		= 	&\sum_{K=0}^{\infty} \frac{(-1)^k (\det \Sigma_2)^k}{(2k)!} \beta_t^{2k} E_2	
		+ 	 \sum_{K=0}^{\infty} \frac{(-1)^k (\det \Sigma_2)^k}{(2k+1)!} \beta_t^{2k+1} \Sigma_2	\\
		=	&\begin{cases}
			E_2 + \beta_t \Sigma_2, \hfill \det \Sigma_2 = 0\\
			\cos\left(\det(\Sigma_2)^{\frac{1}{2}} \beta_t\right) \cdot E_2 
			+ 	\sin\left(\det(\Sigma_2)^{\frac{1}{2}} \beta_t\right) \cdot \det\left(\Sigma_2\right)^{-\frac{1}{2}} \Sigma_2 , \hfill \det \Sigma_2 > 0	\\
			\cosh\left(\det(\Sigma_2)^{\frac{1}{2}} \beta_t\right) \cdot E_2 
			+ 	\sinh\left(\det(\Sigma_2)^{\frac{1}{2}} \beta_t\right) \cdot \left(-\det(\Sigma_2)^{-\frac{1}{2}}\right) \Sigma_2 , \hfill \det \Sigma_2 < 0	
		\end{cases}.
	\end{align*}
With this expression, the determinant of the Jacobian matrix of the first components of the stochastic flow is given by 
	\begin{align*}
		A:= D_v (\Phi_{0,t}^{-1})^{(1)} = 	\int_{0}^{t} e^{\beta_s \Sigma_2} \d s.
	\end{align*} 
	If $\det \Sigma_2 > 0$, the Matrix $A$ can be expressed as
		\begin{align*}
		A =C(t) \cdot E_2	+	 S(t) \cdot (\det \Sigma_2)^{-\frac{1}{2}}\cdot \Sigma_2	
	\end{align*}
	with $C(t) = \int_{0}^{t} \cos\left(\beta_s (\det \Sigma_2)^{\frac{1}{2}}\right) \d s$ and $S(t) = \int_{0}^{t} \sin\left(\beta_s (\det \Sigma_2)^{\frac{1}{2}}\right) \d s$. Its determinant is calculated with respect to $C(t)$ and $S(t)$ by 
	\begin{align*}
		\det A	&=	C(t)^2 + S(t)^2 (\det \Sigma_2)^{-1} \det \Sigma_2	=	C(t)^2 + S(t)^2.
	\end{align*}
	This is always positive. But we cannot expect that for almost all $\omega$ there exists $c>0$ such that $C(t)^2 + S(t)^2 \ge c \cdot t^2$ for all $t$ and thus Assumption \ref{ass:disp_glob} cannot hold true for all $\tau \in (0,\infty)$. Specifically, using addition theorems we have
	\begin{align*}
		\det D_v \Phi_{t}^{(1)} =
		&\left( \int_{0}^{t} \cos(\beta_s) \d s	\right)^2
		+	\left( \int_{0}^{t} \sin(\beta_s) \d s	\right)^2	\\
		=	&\int_{0}^{t}\int_{0}^{t} \cos(\beta_s) \cos(\beta_u) \d s \d u
		+	\int_{0}^{t}\int_{0}^{t} \sin(\beta_s) \sin(\beta_u) \d s \d u	\\
		=&	\int_{0}^{t}\int_{0}^{t} \cos(\beta_s-\beta_u) \d s \d u.
	\end{align*}
	In order to simplify notation we assume $\det \Sigma_2 = 1$. The general case can be calculated analogously. 
The Brownian motion $\beta_t$ is normal distributed with density $f(x) = \frac{1}{\sqrt{2\lambda t}} e^{-\frac{x^2}{2t}}$.
	Thus, given the matrix $\Sigma_{s,u} = \begin{pmatrix}s & \min(s,u) \\ \min(s,u) & u\end{pmatrix}$ the tuple $\begin{pmatrix} \beta_s & \beta_u \end{pmatrix}$ 
	is normal distributed with joint density
	\begin{align*}
		f_{s,u}(x,y) 
		= 	\frac{1}{2\lambda \sqrt{\det \Sigma_{s,u}}} 
		\exp\left( -\frac{1}{2} 
		\begin{pmatrix} x & y \end{pmatrix} 
		\Sigma_{s,u}^{-1} 
		\begin{pmatrix} x \\ y \end{pmatrix}
		\right).
	\end{align*}
	Thus, we calculate the expectation $\mathbb{E} \left(\int_{0}^{t}\int_{0}^{t} \cos(\beta_s-\beta_u) \d s \d u\right)$ explicitly by using these density and the fact that the integral of the density of the normal distribution is equal to $1$. With this we obtain
	\begin{align*}
		\mathbb{E} 	\left(
		\left(\int_{0}^{t} \cos(\beta_s) \d s\right)^2
		+\left(\int_{0}^{t} \sin(\beta_s) \d s\right)^2
		\right)
		= 4t-8+8\exp\left(-\frac{1}{2}t\right).
	\end{align*}
	Since 
	$\left(
	\left(\int_{0}^{t} \cos(\beta_s) \d s\right)^2
	+\left(\int_{0}^{t} \sin(\beta_s) \d s\right)^2
	\right)\ge 0\ \forall \omega \in \Omega$
	we know that there does not exist $c >0$ such that
	$\left(
	\left(\int_{0}^{t} \cos(\beta_s) \d s\right)^2
	+\left(\int_{0}^{t} \sin(\beta_s) \d s\right)^2
	\right)\ge c t^2$ for almost all $\omega \in \Omega$ and all $t$.
\end{proof}

\section{Stochastic versions of chemotactic movement}\label{sec:chemotaxis_det}
In this section, we will use the pathwise dispersion and Strichartz estimates shown above in order to show the main Theorem \ref{thm:main}, which yields the existence of weak martingale solutions to \eqref{IVP_stoch} starting from small initial data which is a stochastic analogue of \cite[Theorem 3]{BCGP2008}. To show the existence of a solution, we construct approximating solutions and prove a stability result. 

\subsection{Solution of a regularized chemotactic equation}
Let us first find a solution to a regularized stochastic chemotactic equation by assuming that the initial value and kernel fulfill additional regularity conditions. Note, that we neither assume further regularity conditions on the stochastic drift coefficients $\sigma_k$ nor on the Brownian motions $\beta^k$.\\
Note, that the 'tumbling' only takes place in a compact subset in the velocity variables due to the compact support of $K$. Nevertheless, the random movement allows for leaving  this set and therefore, solutions are defined on the whole space. Therefore, we distinguish between the set, where $f$ is effected by the presence of 'tumbling' and the set, where $f$ is a solution of the linear problem. Define $\bar{V} \defeq \{v \in \R^d: \exists s\leq t \in [0,T], x \in \R^d: \Psi_{s,t}(x,v)^{(2)} \in V \}$. $V$ is compact and the deviation due to the stochastic flow is bounded by a constant $C\left(\left\lceil\frac{\tilde{\tau}}{\tau}\right\rceil \right)$ due to \eqref{lem: assumption_local_disp:eq:differentiation} and \eqref{lem: assumption_local_disp:eq:hoelder_continuity}. Consequently, $\bar{V}$ is a compact domain, with size bounded by $\left\vert\bar{V}\right\vert \leq C(\left\vert V\right\vert, \left\lceil\frac{\tilde{\tau}}{\tau}\right\rceil)$. 

\begin{ass}\label{ass: regular}
Assume that there exist a parameter $a$ such that the kernel $K$ and initial value $f_0$ satisfy 
	\begin{enumerate}
		\item $f_0 \in L_{x,v}^a \cap L_{x,v}^1 $ is smooth, nonnegative, positive on  $\R^d \times \bar{{V}}$, bounded from above and supported in $\R^d \times \hat{V}$, where $\hat{V}\subseteq \R^d$ is compact with size $|\hat{V}| \leq C(|\bar{V}|)$.
		\item $K: L_x^1 \mapsto L_x^1L_v^1L_{v'}^1$ is Lipschitz continuous.
		\item $K: L_x^{\infty} \mapsto L_{x,v,v'}^{\infty}$ is Lipschitz continuous. 
		\item For all $p_1,r_1 \in [1,\infty]$ and $S \in L_t^{r_1}W_x^{1,p_1}$ the turning kernel $K(S)$ is smooth, bounded in $L^{\infty}(\R, \R^d, \R^d, \R^d)$ and compactly supported in $\R \times \R^d \times V \times V$ and satisfies Assumption \ref{ass:turning_kernel}.
	\end{enumerate}
\end{ass}

\begin{lem}\label{lem: approx_sol_measurable}
	Let $d \ge 2$. Fix $T \in (0, \infty)$ and consider parameters $r,a,p,q$ such that 
\begin{align*}
	r \in \left(2, \frac{d+3}{2}\right], 
	\quad
	r \ge a \ge 
	\max\left(\frac{d}{2},\frac{d}{d-1}\right) 
	\quad
	\frac{1}{p} = \frac{1}{a} - \frac{1}{rd}, \frac{1}{q} = \frac{1}{a} + \frac{1}{rd}.
\end{align*} Fix a stochastic basis $(\Omega, \mathcal{F}, \mathbb{P},(\mathcal{F}_t)_{t=0}^{T}, (\beta^k)_{k \in \N})$ and assume that the turning kernel $K$ and initial data $f_0$ fulfill Assumption \ref{ass: regular} with parameter $a$ as above and assume that the stochastic drift coefficients $\sigma^k$ fulfill Assumption \ref{ass: ex_flow}. Then, there exists an analytically weak, stochastically strong solution to \eqref{IVP_stoch}
	which has the following properties:
	\begin{enumerate}
		\item $f : \Omega \times [0, T] \rightarrow L^1_{x,v}$ is a nonnegative $(\mathcal{F}_t)_{t\in[0,T]}$ progressively measurable process.
		\item $f$ belongs to $L^2(\Omega,C_t (L^1_{x,v})) \cap L^{\infty}(\Omega \times [0, T] \times \R^{2d})\cap L^{\infty}(\Omega,L_t^r([0,T],L_x^pL_v^q))$.
	\end{enumerate}
\end{lem}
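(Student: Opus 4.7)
The plan is to construct the solution pathwise via a Picard iteration scheme based on the mild formulation established earlier, then verify progressive measurability and the claimed integrability. Concretely, define $f^0(t,x,v) := f_0(\Psi_{0,t}(x,v))$ and iteratively let $S^n$ solve the elliptic problem $S^n-\Delta S^n = \rho^n = \int f^n\,dv$, and set
\begin{align*}
f^{n+1}(t,x,v) := f_0(\Psi_{0,t}(x,v)) + \int_0^t g^n(s,\Psi_{s,t}(x,v))\,ds,
\end{align*}
where $g^n = \int_V K(S^n)(f^n)'-K^*(S^n)f^n\,dv'$. Since Assumption \ref{ass: ex_flow} guarantees the stochastic flow is well-defined, globally smooth in $(x,v)$ and volume-preserving, this iteration is well posed for each $\omega$, and inherits $(\mathcal{F}_t)$-progressive measurability from the flow.

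The first substantive step is to derive uniform a priori bounds. For $L^\infty_tL^1_{x,v}$, integrate $|f^{n+1}|$ in $(x,v)$, use volume preservation of $\Psi$ (Lemma \ref{lem: volumepreserving}) and the Lipschitz bound of $K$ on $L^1$ in Assumption \ref{ass: regular} together with Fubini to control the turning term by a multiple of $\|f^n\|_{L^1}$, then close via Gronwall. For $L^\infty_{t,x,v}$, the compactness of the velocity support of $K(S)$ and of $\text{supp}(f_0)$, combined with the boundedness of $K$ on $L^\infty$ and the bounded deformation $|\bar V|\le C$ from Section \ref{sec:chemotaxis_det}, yield $\|f^{n+1}\|_\infty \le \|f_0\|_\infty + T\,C(K)\|f^n\|_\infty$. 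For the $L^r_tL^p_xL^q_v$-bound, nonnegative $f^n$ are bounded in $L^1$ and $L^\infty$ and supported in $\R^d \times \bar V$, so interpolation (not Strichartz) gives the claim directly.

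Next, show that $\{f^n\}$ is Cauchy in $L^\infty_tL^1_{x,v}$ pathwise. The difference $f^{n+1}-f^n$ satisfies a similar mild formula with right-hand side controlled by $\|K(S^n)-K(S^{n-1})\|\cdot\|f^n\|$ plus $\|K(S^{n-1})\|\cdot\|f^n-f^{n-1}\|$. Standard elliptic regularity for the Bessel potential gives $\|S^n-S^{n-1}\|_{L^1} \lesssim \|\rho^n-\rho^{n-1}\|_{L^1}\le\|f^n-f^{n-1}\|_{L^1}$, combined with the Lipschitz property of $K$ on $L^1$, delivers a recursive inequality closing by Gronwall on a uniform $[0,T]$. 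The limit $f$ then satisfies the mild equation and, reading it backwards, the weak PDE. Nonnegativity follows inductively by splitting the source into $K(S^n)(f^n)'\ge 0$ (gain) and $-K^*(S^n)f^n$ (loss); the gain-loss structure and the representation along the stochastic flow give $f^{n+1}(t,x,v)\ge f_0(\Psi_{0,t}(x,v))\exp(-\|K^*(S^n)\|_\infty t)\ge 0$.

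The step I expect to be most delicate is reconciling the pathwise fixed-point construction with the required $(\mathcal{F}_t)$-progressive measurability of $f$ and the continuity in $L^1_{x,v}$, because each iterate depends on $\Psi_{s,t}$ globally in $s$, not only on the forward filtration. Here one uses that the backward flow $\Psi_{s,t}$, at each fixed $t$, is $\mathcal{F}_t$-measurable (as shown in \cite{K1997}), so $f^{n+1}(t,\cdot)$ is $\mathcal{F}_t$-measurable; continuity of $t\mapsto\langle f,\varphi\rangle$ follows from the continuity of $t\mapsto \Psi_{s,t}$ and dominated convergence using the uniform $L^\infty$-bound. The $L^2(\Omega;C_t(L^1_{x,v}))$ membership then follows from the deterministic bound on $\|f(\omega,t)\|_{L^1}$ and the finiteness of $T$.
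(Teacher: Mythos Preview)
Your overall strategy coincides with the paper's: Picard iteration on the mild formula along the stochastic flow, contraction in $L^{\infty}_tL^1_{x,v}$ on a short interval followed by iteration in time, a separate $L^{\infty}_{t,x,v}$ bound from boundedness of $K$, and progressive measurability inherited from the $\mathcal{F}_t$-measurability of $\Psi_{s,t}$. One genuine difference: for the $L^r_tL^p_xL^q_v$ bound you bypass Strichartz and interpolate between the uniform $L^1$ and $L^{\infty}$ bounds, using the compact velocity support to pass from $L^p_xL^p_v$ to $L^p_xL^q_v$. This is legitimate for the \emph{regularized} problem and is actually simpler than the paper's route, which invokes Lemma~\ref{lm: stoch_strich}; the paper's choice is presumably made so that the argument parallels the later a~priori estimate (Lemma~\ref{lem:a-priori}), where the bound must be independent of the $L^{\infty}$ truncation of $K$ and interpolation is no longer available.

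There is, however, a real gap in your nonnegativity argument. In the iteration you wrote, the loss term is $-K^*(S^n)f^n$, i.e.\ it acts on the \emph{previous} iterate, not on $f^{n+1}$. The exponential lower bound $f^{n+1}\ge f_0(\Psi_{0,t})\exp(-\|K^*(S^n)\|_\infty t)$ would follow only from a semi-implicit scheme in which the loss is $-K^*(S^n)f^{n+1}$; with the fully explicit scheme you defined, $g^n$ may well be negative and nothing prevents $f^{n+1}$ from dipping below zero. The paper avoids this by proving nonnegativity \emph{not} for the iterates but for the fixed point $f$ itself: once $f$ solves the self-consistent mild equation, one writes the exponential (Duhamel) formula for $f(t,\Phi_{0,t})$ with absorption $\int_V K^*(S)\,dv'$ and nonnegative source $\int_V K(S)f'\,dv'$, runs a stopping-time argument on $t^*=\inf\{t:\inf_{x,v\in V} f(t,x,v)<0\}$, and derives a contradiction from continuity of $f$ (which in turn uses that the iterates converge in $L^{\infty}_{t,x,v}$). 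You should either switch to that argument on the limit, or redefine your iteration so that the absorption is implicit.
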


\begin{proof}
We start by constructing a sequence of approximations 
	$\{f^{k}\}_{k \in \N}$ over $[0,T]$ by 
	\begin{align*}
		f^{0} &= 0\\
		f^{k} &= f_0\circ \Psi_{0,t} + \int_{0}^{t} \int_V \left(K\left(S^{k}\right)\left(f^{k-1}\right)' - (K)^{\ast}\left(S^{k}\right)f^{k-1}\right)d v' \circ \Psi_{s,t} (x,v)\d s 
	\end{align*}
	with $S^{k} - \Delta S^{k} = \int f^{k-1} \d v$. First, since $|V|$ and $K$ are bounded,  $f^{k}$ is bounded by 	
	\begin{align*}
		\dorm{f^{k}}{\infty}{\infty}{\infty}\leq \norm{f_0}_{L_{x,v}^{\infty}}+  C T 	\dorm{f^{k-1}}{\infty}{\infty}{\infty},
	\end{align*}
	where $C$ is independent of $k$ but depends on the bound on $K$. 
If $T<C^{-1}$, using the iterative definition and geometric series we obtain
	\begin{align}\label{eq: bound_f_0}
		\dorm{f^{k}}{\infty}{\infty}{\infty}\leq (1-CT)^{-1}\norm{f_0}_{L_{x,v}^{\infty}}.
	\end{align}
	Let $X_T$ denote the Banach space of $(\mathcal{F}_t)_{t\in[0,T]}$ progressively measurable processes
	$f : \Omega \times [0, T] \rightarrow L^1_{x,v}$ endowed with the $L^2(\Omega,C_t (L^1_{x,v}))$-norm. Since the stochastic flow is $\mathbb{P}$-a.s. volume preserving, maximizing over $t$ yields $\mathbb{P}$-a.s.
	\begin{align*}
		&\dorm{f^{k+1}-f^{k}}{\infty}{1}{1} \\
		& \leq\norm{\int_0^t \zorm{ \int_V K\left(S^{k+1}\right)\left(\left(f^{k}\right)' - \left(f^{k-1}\right)'\right) \d v' }{1}{1}\d s}_{L_t^{\infty}}\\
		&+ \norm{\int_0^t \zorm{ \int_V \left(K \left(S^{k+1}\right)- K \left(S^{k}\right)\right)\left(f^{k-1}\right)'d v' }{1}{1}\d s}_{L_t^{\infty}} \\
		& + \norm{\int_0^t \zorm{\int_V \left(K \right)^{\ast}\left(S^{k+1}\right)\d v'\left(f^{k}- f^{k-1}\right) }{1}{1}\d s}_{L_t^{\infty}}\\
		&+ \norm{\int_0^t \zorm{\int_V \left((K )^{\ast}\left(S^{k+1}\right)- (K )^{\ast}\left(S^{k}\right)\right)\d v'f^{k-1} }{1}{1}\d s}_{L_t^{\infty}}\\
		& = I + II + III + IV.
	\end{align*}
	Using the boundedness of $K $, the iterative definition of $f^{k}$ and the geometric series, we obtain for some $C$ independent of $k$
	\begin{align*}
		I &\leq \norm{\int_0^t \norm{K \left(S^{k+1}\right)}_{L_x^{\infty}L_v^{1}L_{v'}^{\infty}}\zorm{f^{k}-f^{k-1}}{1}{1}\d s}_{L_t^{\infty}}\\
		&\leq C T\dorm{f^{k}-f^{k-1}}{\infty}{1}{1}.
	\end{align*}
	Using in addition the Lipschitz continuity of $K$, the fact that the solution $S^{k}$ is given as a convolution of the Bessel-Potential with the density $\rho_{k-1}$, and the integrability of the Bessel-Potential in $L^1$ we get
	\begin{align*}
		II \leq& \norm{\int_0^t \norm{K\left(S^{k+1}\right)-K\left(S^{k}\right)}_{L_x^{1}L_v^{1}L_{v'}^{1}}\zorm{f^{k-1}}{\infty}{\infty}\d s}_{L_t^{\infty}}\\
		\leq & C T\norm{ \norm{S^{k+1}-S^{k}}_{L_x^1}}_{L_t^{\infty}}\dorm{f^{k-1}}{\infty}{\infty}{\infty}\\
		\leq & C T \dorm{f^{k}-f^{k-1}}{\infty}{1}{1}\dorm{f^{k-1}}{\infty}{\infty}{\infty}\\
		\leq &\sum_{m=1}^{k-1}\left(CT\right)^m\norm{f_0}_{L_{x,v}^{\infty}}
		\dorm{f^{k}-f^{k-1}}{\infty}{1}{1}\\
		\leq &CT\left(1-CT\right)^{-1}\norm{f_0}_{L_{x,v}^{\infty}}\dorm{f^{k}-f^{k-1}}{\infty}{1}{1}.
	\end{align*}
	To calculate the third term we proceed as in the first one and obtain
	\begin{align*}
		III &\leq \norm{\int_0^t \norm{K(S^{k+1})}_{L_x^{\infty}L_v^{\infty}L_{v'}^{1}}\zorm{f^{k}-f^{k-1}}{1}{1}\d s}_{L_t^{\infty}}\\
		&\leq C T \dorm{f^{k}-f^{k-1}}{\infty}{1}{1}.
	\end{align*}
	For computing the fourth term we proceed as in the calculation of the second one and get
	\begin{align*}
		IV &\leq \norm{\int_0^t \norm{K\left(S^{k+1}\right)-K\left(S^{k}\right)}_{L_x^{1}L_v^{1}L_{v'}^{1}}\zorm{f^{k-1}}{\infty}{\infty}\d s}_{L_t^{\infty}}\\
		&\leq CT\left(1-CT\right)^{-1}\norm{f_0}_{L_{x,v}^{\infty}}\dorm{f^{k}-f^{k-1}}{\infty}{1}{1}.
	\end{align*}
	Combining these inequalities we obtain
	\begin{align}\label{eq: sup}
		\norm{f^{k+1}-f^{k}}_{X_T} 
		\leq CT\left(\left(1-CT\right)^{-1}\norm{f_0}_{L_{x,v}^{\infty}}+1\right)\norm{f^{k}-f^{k-1}}_{X_T}.
	\end{align}
	Therefore, for $T$ sufficiently small by Banach's fixpoint theorem, there exists a fixpoint $f$ in $X_T$. Applying this argument a finite number of times allows us to remove the constraint on $T$. More precisely, let $i \in \N$ be an integer and $T_i = \frac{1}{C\cdot(i+1)} \frac{1}{\norm{f_0}_{L_{x,v}^{\infty}}+1}$. Thus, equation \eqref{eq: bound_f_0} gives
	\begin{align*}
		\norm{f_{\sum_{j=1}^{i}T_j}}_{L_{x,v}^{\infty}}\leq (1-CT_{i})^{-1}\norm{f_{\sum_{j=1}^{i-1}T_j}}_{L_{x,v}^{\infty}}<\frac{i+1}{i}\norm{f_{\sum_{j=1}^{i-1}T_j}}_{L_{x,v}^{\infty}}<(i+1)\norm{f_0}_{L_{x,v}^{\infty}}.
	\end{align*}
	Consequently, the prefactor in equation \eqref{eq: sup} gives 
	\begin{align*}
		 CT_i\left(\left(1-CT_i\right)^{-1}\norm{f_{\sum_{j=1}^{i-1}T_j}}_{L_{x,v}^{\infty}}+1\right) &< CT_i \left((i+1)\norm{f_0}_{L_{x,v}^{\infty}}+1\right)\\&< CT_i(i+1) \left(\norm{f_0}_{L_{x,v}^{\infty}}+1\right)=1.
	\end{align*} 
	This ensures that the conditions for using Banach's fixpoint theorem are satisfied. Furthermore, calculating
	\begin{align*}
	\sum_{i=1}^m T_i=\frac{1}{C(\norm{f_0}_{L_{x,v}^{\infty}}+1)}\sum_{i=1}^m \frac{1}{(i+1)}, 
	\end{align*}
	we obtain a divergent sequence. Thus, applying the above argument a finite number of times allows us to remove the constraint on $T$.
	 Consequently, there exists $f \in X_T$ such that ${f^{k}}$ converges to $f$ in $L^2(\Omega,C_t (L^1_{x,v}))$. \\
	Secondly, we aim to show that $f$ belongs to $L^{\infty}(\Omega \times [0, T] \times \R^{2d})$.
	Repeating the argument in equation \eqref{eq: bound_f_0} a finite number of times we can remove the restriction on $T$. Taking $L^{\infty}(\Omega)$-norms on both sides of the above inequality yields
	the uniform bound. By weak-* $L^{\infty}$ sequential compactness of balls in $L^{\infty}(\Omega \times [0, T] \times \R^{2d})$, each $f$ is an element of $L^{\infty}(\Omega \times [0, T] \times \R^{2d})$. Moreover, repeating the above calcution with $X_T$ replaced by $L^{\infty}(\Omega \times [0, T] \times \R^{2d})$, we deduce that $f$ is also the fixpoint solution of $f^k$ in $L^{\infty}(\Omega \times [0, T] \times \R^{2d})$. More precisely, due to the boundedness and Lipschitz continuity of $K$ both in $L_x^1$ and $L_x^{\infty}$ we can follow the same calculation as above.  
	
	We next demonstrate that $f$ is $\mathbb{P}-a.s.$ nonnegative. 
	Define the stopping-time $$t^{\ast}(\omega) \defeq \inf \{t \in [0,T]: \exists v \in V, \exists x \in X: \inf(f(t,x,v))<0 \}.$$ Since $f_0$ and $K$ are smooth and $K$ is compactly supported, $f^k$ is continuous for each $k$. Since $f^k$ converges to $f$ in $L^{\infty}([0,T]\times \R^{2d})$ the fixpoint solution $f$ is continuous. Recall that the turning kernel $K(S)$ is supported in the compact set $V$ in the velocity variables and in the compact set $X$ in the $x$ variables. Moreover, since $f_0(x,v)$ is positive, and, $|V|$ and $|X|$ are bounded, we deduce $t^{\ast}(\omega)>0$ for almost all $\omega$. Assume $t^{\ast}(\omega) < T$. By using the relation $\Psi_{s,t} \circ \Phi_{0,t} = \Phi_{0,s}$ and integration by parts, we obtain $\mathbb{P}$-almost surely that 
	\begin{align*}
		&e^{\int_0^{t^{\ast}}\int_V K\left(S\right)(s,\Phi_{0,s}(x,v)^{(1)} ,v',\Phi_{0,s} (x,v)^{(2)}) \d v'\d s} f({t^{\ast}},\Phi_{0,{t^{\ast}}}(x,v))\\
		&= f_0(x,v) + \int_0^{t^{\ast}} e^{\int_0^s\int_V K\left(S\right)(s,\Phi_{0,u}(x,v)^{(1)},v',\Phi_{0,u} (x,v)^{(2)}) \d v' \d u}\\
		&\cdot \int_{V}K\left(S\right)(s,\Phi_{0,s}(x,v),v')\left(f(s,\Phi_{0,s}(x,v)^{(1)},v')\right)\d v' \d s\\
		&\geq f_0(x,v).
	\end{align*}
Replacing $(x,v)$ by $\Psi_{0,{t^{\ast}}}(x,v)$ gives for all $x \in X$ and all $v \in V$ 		\begin{align*}
	f({t^{\ast}},x,v) &=  f({t^{\ast}}, \Phi_{0,{t^{\ast}}}(\Psi_{0,{t^{\ast}}}(x,v)))\\
	&\geq e^{-\int_0^{t^{\ast}}\int_V (K)^{\ast}\left(S\right) \d v' \circ \Phi_{0,s}(\Psi_{0,{t^{\ast}}}(x,v)) \d s}f_0(\Psi_{0,{t^{\ast}}}(x,v)).
\end{align*}
Note, that for $v \in V$ the velocity variable $\Psi_{0,{t^{\ast}}}(x,v)^{(2)}$ is an element of $\bar{V}$, the set where $f$ is effected by the tumbling and thus,  $f_0(\Psi_{0,{t^{\ast}}}(x,v))>0$.
	Hence,  for all $x \in X$ and all $v \in V$ the function  $f({t^{\ast}},x,v)$ is $\mathbb{P}$-almost surely positive and, since $f$ is continuous, for almost all $\omega$ there is $\tilde{t}(\omega)>t^{\ast}(\omega)$ such that $f({\tilde{t}},x,v)>0$ for all  $x \in X$ and all $v \in V$, in contradiction to the definition of $t^{\ast}$. Hence, $t^{\ast} = T$. Hence, for all  $(x,v) \in \R^{2d}$ and all $t \in [0,T]$ we deduce $\mathbb{P}$-almost surely that
				\begin{align*}
				f(t,x,v) &= f_0\circ \Psi_{0,t}(x,v) + \int_{0}^{t} \int_V \left(K\left(S\right)\left(f\right)' - (K)^{\ast}\left(S\right)f\right)d v' \circ \Psi_{s,t} (x,v)\d s \\
				 &= f_0\circ \Psi_{0,t}(x,v) \geq 0.
			\end{align*}
	Finally, we aim to show that $f \in L^{\infty}(\Omega,L_t^r([0,T],L_x^pL_v^q))$. Define the parameters $(\tilde{r}, \tilde{p}, \tilde{q})$ by 
	\begin{align*}
		&\frac{1}{\tilde{q}} = \frac{1}{a'} + \frac{1}{d} - \frac{2}{rd}, \quad \quad \frac{1}{\tilde{p}} = \frac{1}{a'} - \frac{1}{d} + \frac{2}{rd}, \quad \quad \frac{1}{\tilde{r}} = 1 - \frac{2}{r}. 
	\end{align*}
	Then, $(a,r,p,q)$ and $(a',\tilde{r}, \tilde{p}, \tilde{q})$ are jointly admissible tuples (comp. Definition \ref{def: admissible}). Using the Strichartz estimates Lemma \ref{lm: stoch_strich} we obtain 	
	\begin{align*}
		&\dorm{f^{k}}{r}{q}{p}\\
		&\leq C\left(\left\vert V\right\vert, \left\lceil\frac{T}{\tau}\right\rceil\right)\zorm{f_0}{a}{a} + C\left(\left\vert V\right\vert, \left\lceil\frac{T}{\tau}\right\rceil\right)\dorm{ \int_V K\left(S^{k}\right)\left(f^{k-1}\right)'dv'}{\tilde{r}'}{\tilde{p}'}{\tilde{q}'}.
	\end{align*}
	With a similar calculation as for the a-priori-estimates Lemma \ref{lem:a-priori}, and with the boundedness and compact support of $K$ we estimate 
	\begin{align*}
		\dorm{ \int_V K\left(S^{k}\right)\left(f^{k-1}\right)'dv'}{\tilde{r}'}{\tilde{p}'}{\tilde{q}'} 	&\leq\norm{\norm{K\left(S^{k}\right)}_{L_x^{\alpha}L_v^{\tilde{q}'}L_{v'}^{q'}}\zorm{f^{k-1}}{p}{q}}_{L_t^{\tilde{r}'}[0,T]} \\
		&\leq \norm{ C \zorm{f^{k-1}}{p}{q}}_{L_t^{\tilde{r}'}[0,T]} 	\\
		&\leq C T^{\frac{1}{r}}\dorm{f^{k-1}}{r}{p}{q}.
	\end{align*} 
	Thus, with the iterative definition and geometric summation for $T(\omega)$ small enough we obtain
	\begin{align*}
		\dorm{f^{k}}{r}{q}{p}
		\leq & C\left(\left\vert V\right\vert, \left\lceil\frac{T}{\tau}\right\rceil\right)\sum_{m=0}^{k}\left(C\left(\left\vert V\right\vert, \left\lceil\frac{T}{\tau}\right\rceil\right)T^{\frac{1}{r}}\right)^m\norm{f_0}_{L_{x,v}^a}\\
		\leq & C\left(\left\vert V\right\vert, \left\lceil\frac{T}{\tau}\right\rceil\right)\left(1-C\left(\left\vert V\right\vert, \left\lceil\frac{T}{\tau}\right\rceil\right)T^{\frac{1}{r}}\right)^{-1}\norm{f_0}_{L_{x,v}^a}.
	\end{align*}
	Thus, $f^{k} \in L_t^rL_x^pL_v^q$ is uniform bounded for all $k$. Applying this argument a finite number of times, we can remove the constraint on $T$. By weak-* compactness we deduce that
	$f \in L^{\infty}(\Omega,L_t^r([0,T],L_x^pL_v^q))$. 
\end{proof}

\begin{remark}\label{rem:weak martingale solution}
	Since the solutions constructed above are stochastically strong solutions they are also stochastically weak (martingale) solutions and mild solutions. 
\end{remark}

\subsection{A-priori-estimates and bootstrapping}
The above solutions of a regularized kinetic model of chemotaxis satisfy the pathwise mild formulation, more precisely, $$f(\omega, t, x, v)=	f_0(\Phi_{0,t}^{-1}(x,v))  +\int_{0}^{t} \int_{V} \left(K^{\ast}(S)f - K(S)(f)'\d v'\right) \circ \Phi_{s,t}^{-1}(x,v) \d s .$$ 
First, we observe that the $L_{x,v}^1$-norm of any nonnegative solution $f$ to \eqref{IVP_stoch} is uniformly bounded in time for almost all $\omega$ thanks to conservation of mass provided that the $L_{x,v}^1$-norms of the initial data are uniformly bounded.

\begin{lem}\label{lem:conservation of mass}
	Assume that Assumption \ref{ass: regular} is fulfilled. Let $f$ be a nonnegative solution to the regularized chemotactic equation \eqref{IVP_stoch} and let $t \leq T$. Then, for almost all $\omega$ for the $L^1$-norm of $f$ we obtain 
	\begin{align*}
		\int_{\R^d}\int_{\R^d} f(\omega,t,x,v) \d v \d x = \int_{\R^d}\int_{\R^d} f_0( x,v) \d v \d x. 
	\end{align*}
\end{lem}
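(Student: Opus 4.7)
The plan is to integrate the pathwise mild formulation
\[
f(\omega,t,x,v) = f_0\bigl(\Psi_{0,t}(x,v)\bigr) + \int_0^t \left(\int_V \bigl(K(S)f' - K^\ast(S)f\bigr)\,dv'\right)\!\bigl(s,\Psi_{s,t}(x,v)\bigr)\,ds
\]
over $\R^{2d}$ and exploit two ingredients: first, that $\Psi_{s,t}=\Phi_{s,t}^{-1}$ is $\mathbb P$-a.s.\ volume preserving by Lemma \ref{lem: volumepreserving}, so integrals in $(x,v)$ are invariant under the change of variables $(x,v)\mapsto \Psi_{s,t}(x,v)$; and second, that the collision integrand $g=\int_V(K(S)f'-K^\ast(S)f)\,dv'$ has vanishing mean in $v$ pathwise, by the very definition of $K^\ast$.

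Concretely, I would first apply Fubini (justified since $f\in L^\infty(\Omega;C_t L^1_{x,v})$ by Lemma \ref{lem: approx_sol_measurable}, $K(S)$ is bounded, and $K(S)$ is compactly supported in $V\times V$ in the velocity variables) and then perform the volume-preserving changes of variables to obtain
\[
\int_{\R^{2d}} f(t,x,v)\,dx\,dv = \int_{\R^{2d}} f_0(x,v)\,dx\,dv + \int_0^t \int_{\R^d}\int_{\R^d}\!\int_V \bigl(K(S)f' - K^\ast(S)f\bigr)(s,x,v,v')\,dv'\,dv\,dx\,ds.
\]
It then remains to show that the double velocity integral is zero pointwise in $(s,x,\omega)$. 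Using the definition $K^\ast(S)(t,x,v,v')=K(S)(t,x,v',v)$ and Fubini in the $(v,v')$ variables (both confined to the compact set $V$ where $K(S)$ is supported) gives
\[
\int_{\R^d}\!\int_V K^\ast(S)(t,x,v,v')f(t,x,v)\,dv'\,dv = \int_V\!\int_{\R^d} K(S)(t,x,v',v)f(t,x,v)\,dv\,dv',
\]
which, after relabeling $v\leftrightarrow v'$, is exactly $\int_{\R^d}\int_V K(S)(t,x,v,v')f(t,x,v')\,dv'\,dv$. This cancels the gain term and yields the claim.

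The only delicate point is justifying Fubini and the change of variables rigorously: one needs that the integrand $(s,x,v)\mapsto \int_V|K(S)f'|+|K^\ast(S)f|\,dv'$ lies in $L^1([0,t]\times\R^{2d})$ for $\mathbb P$-a.e.\ $\omega$. Under Assumption \ref{ass: regular}, $K$ is bounded, $V$ is compact, and $f(s,\cdot)\in L^1_{x,v}$ uniformly in $s\in[0,T]$ (Lemma \ref{lem: approx_sol_measurable}), so this integrability is immediate and no further technical obstacle arises. This is the main, and only, routine step; the conservation of mass then follows pathwise and hence $\mathbb P$-almost surely.
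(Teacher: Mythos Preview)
Your proof is correct and follows essentially the same approach as the paper: integrate the mild formulation over $\R^{2d}$, use the volume-preserving property of the stochastic flow (Lemma~\ref{lem: volumepreserving}) to remove the $\Psi_{s,t}$, and then cancel the gain and loss terms by the $v\leftrightarrow v'$ symmetry built into the definition of $K^\ast$. Your treatment is in fact slightly more explicit about the Fubini justification than the paper's own proof.
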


\begin{proof} We compute, using 
		\begin{align*}
		&\int_{0}^{t} \int_{V}\int_{V} \int_{\R^{d}} \left(K^{\ast}(S)f - K(S)(f^{n})'\d v'\right) \d s \d v \d x \\
		=	&\int_{0}^{t} \int_{\R^{d}} \int_{V}\int_{V} K(S)(s,x,v',v)f(s,x, v) \d v \d v' \d x \d s\\
		&-\int_{0}^{t} \int_{\R^{d}} \int_{V}\int_{V} K(S)(s,x,v,v')f(s,x, v') \d v' \d v \d x \d s =0,
	\end{align*}
	the support of the turning kernel in $V$, the mild formulation and the volume preservation of the stochastic flow to get
	\begin{align*}
		&\int_{\R^{d}} \int_{\R^d} f(\omega, t, x, v) \d v \d x \\
		=	&\int_{\R^{d}} \int_{\R^d} f_0(\Phi_{0,t}^{-1}(x,v)) \d v \d x \\
		&+\int_{\R^{d}} \int_{\R^d}\int_{0}^{t} \int_{V} \left(K^{\ast}(S)f - K(S)(f)'\d v'\right) \circ \Phi_{s,t}^{-1}(x,v) \d s \d v \d x\\
		=	&\int_{\R^{d}} \int_{\R^d} f_0(x,v) \d v \d x.
	\end{align*}

\end{proof}
Second, we show a-priori estimates in mixed $L^p$-spaces, which is the main step in showing the existence of solutions.
\begin{lem}\label{lem:a-priori}
	Let $d \ge 2$. Fix $T \in (0, \infty)$ and consider parameters $r,a,p,q$ such that 
	\begin{align*}
		r \in \left(2, \frac{d+3}{2}\right], 
		\quad
			r \ge a \ge 
			\max\left(\frac{d}{2}, \frac{d}{d-1}\right)
		\quad
		\frac{1}{p} = \frac{1}{a} - \frac{1}{rd}, \quad \frac{1}{q} = \frac{1}{a} + \frac{1}{rd}.
	\end{align*} 
	Assume that the turning kernel $K$ is supported in $\R \times \R^d \times V \times V$ and satisfies Assumption \ref{ass:turning_kernel} with a constant $\tilde{C}$ and either Assumption \ref{ass:disp_local} or Assumption \ref{ass:disp_glob} is valid for some $\tau$. Assume additionally, that $K$ and $f_0$ fulfill Assumption \ref{ass: regular}. Then, for any $\tilde{\tau}$ in the interval $\tau \leq \tilde{\tau} \leq T$ there exists $C\left(\left\lceil\frac{\tilde{\tau}}{\tau}\right\rceil \right) > 0 $ depending only on the quotient $\frac{\tilde{\tau}}{\tau}$, $\tilde{C}$ and further deterministic parameters, such that for all nonnegative solutions $f$ to \eqref{IVP_stoch}, we have $\mathbb{P}$-almost surely
	\begin{align}\label{lem: a-priori_eq_1}
		\Vert f \Vert_{L_t^r([0,\tilde{\tau}] L_x^p L_v^q)}
		&\le C\left(\left\lceil\frac{\tilde{\tau}}{\tau}\right\rceil \right)\Vert f_0 \Vert_{L^a(\R^{2d})} 
		+ C\left(\left\lceil\frac{\tilde{\tau}}{\tau}\right\rceil \right) \Vert f \Vert_{L_t^r([0,\tilde{\tau}] ,L_x^p L_v^q)}^2.
	\end{align}
	If there exists a $m \in \N$ such that 
$\Vert f_0\Vert_{L^a(\R^{2d})}< C^{-2}(m)\frac{1}{8}$, there is a stopping-time $\tau^{\ast}\defeq \min(m \tau,T)$ and $C$ independent of $\omega$ and $f$ such that $\mathbb{P}$-almost surely
	\begin{align}\label{lem: a-priori_eq_2}
		\Vert f \Vert_{L_t^r([0,\tau^{\ast}], L_x^p L_v^q)}\le C.
	\end{align}
	For a sequence of initial values $f_0^k \in L^1(\R^{2d}) \cap L^{a}(\R^{2d})$ with $\norm{f_0^k}_{L^{a}(\R^{2d})}$ converging to zero, the maximal existence time $\tau^{\ast}$ converges to $T$.
\end{lem}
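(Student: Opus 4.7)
The plan is to combine the pathwise mild formulation of the regularized equation with the stochastic Strichartz estimate of Lemma \ref{lm: stoch_strich}, then dominate the quadratic right-hand side by $\|f\|_{L^r_t L^p_x L^q_v}^2$ using Assumption \ref{ass:turning_kernel}, elliptic regularity for $S-\Delta S = \rho$, and the effective compact velocity support of $f$. A continuity bootstrap on the resulting inequality $\phi \le A + B\phi^2$ then produces the uniform bound and, as $\|f_0\|_{L^a}\to 0$, drives the maximal existence time to $T$.

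Concretely, I would start from
\begin{align*}
f(t,x,v) = f_0(\Psi_{0,t}(x,v)) + \int_0^t \int_V \bigl(K(S)f' - K^{\ast}(S) f\bigr)\,dv' \circ \Psi_{s,t}(x,v)\,ds
\end{align*}
and fix the jointly admissible dual pair $(\tilde{q},\tilde{r},\tilde{p},a')$ with $\tfrac{1}{\tilde{r}} = 1 - \tfrac{2}{r}$, $\tfrac{1}{\tilde{p}} = \tfrac{1}{a'} - \tfrac{1}{d} + \tfrac{2}{rd}$, $\tfrac{1}{\tilde{q}} = \tfrac{1}{a'} + \tfrac{1}{d} - \tfrac{2}{rd}$. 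Applying the homogeneous and inhomogeneous parts of Lemma \ref{lm: stoch_strich} gives
\begin{align*}
\|f\|_{L^r_t([0,\tilde{\tau}],L^p_x L^q_v)} \le C\left(\left\lceil\tfrac{\tilde{\tau}}{\tau}\right\rceil\right)\|f_0\|_{L^a_{x,v}} + C\left(\left\lceil\tfrac{\tilde{\tau}}{\tau}\right\rceil\right)\|g\|_{L^{\tilde{r}'}_t L^{\tilde{p}'}_x L^{\tilde{q}'}_v}.
\end{align*}
Using Hölder in $v'$, $v$, $x$, and $t$ successively on $g = \int_V (K(S)f' - K^{\ast}(S)f)\,dv'$ reduces the nonlinear norm to $\|K(S)\|_{L^r_t L^\alpha_x L^{\tilde{q}'}_v L^{q'}_{v'}}\cdot\|f\|_{L^r_t L^p_x L^q_v}$, with $\tfrac{1}{\alpha} = \tfrac{1}{\tilde{p}'} - \tfrac{1}{p} = \tfrac{r-1}{rd}$ and the time exponent forced to be $r$.

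Next I would apply Assumption \ref{ass:turning_kernel} (with $p_1=\alpha$, $p_2=\tilde{q}'$, $p_3=q'$, $r_1=r$) to bound $\|K(S)\|_{L^r_t L^\alpha_x L^{\tilde{q}'}_v L^{q'}_{v'}} \le C\|S\|_{L^r_t W^{1,\alpha}_x}$. Bessel potential estimates for $S - \Delta S = \rho$ give $\|S\|_{W^{2,p}}\le C\|\rho\|_{L^p}$, and the Sobolev embedding $W^{1,p}\hookrightarrow L^\alpha$ is valid precisely when $\tfrac{1}{\alpha} \ge \tfrac{1}{p} - \tfrac{1}{d}$, which translates to $a\ge d/2$, one of the lemma's standing hypotheses. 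Since by the construction in Lemma \ref{lem: approx_sol_measurable} the density $\rho = \int f\,dv$ has effective velocity support in the compact set $\bar V$, one has $\|\rho(t)\|_{L^p_x} \le |\bar V|^{1/q'}\|f(t)\|_{L^p_x L^q_v}$; integrating in $t$ and combining yields
\begin{align*}
\|f\|_{L^r_t([0,\tilde{\tau}], L^p_x L^q_v)} \le C\left(\left\lceil\tfrac{\tilde{\tau}}{\tau}\right\rceil\right)\|f_0\|_{L^a} + C\left(\left\lceil\tfrac{\tilde{\tau}}{\tau}\right\rceil\right)\|f\|_{L^r_t([0,\tilde{\tau}], L^p_x L^q_v)}^2,
\end{align*}
which is \eqref{lem: a-priori_eq_1}.

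For \eqref{lem: a-priori_eq_2}, fix $m\in\N$ so that the constant $C_m := C(m)$ is frozen for $\tilde{\tau}\in[0,m\tau]$. The map $\phi(\tilde{\tau}) := \|f\|_{L^r_t([0,\tilde{\tau}], L^p_x L^q_v)}$ is continuous, nondecreasing, with $\phi(0)=0$, and satisfies $\phi\le C_m\|f_0\|_{L^a} + C_m\phi^2$. The smallness $\|f_0\|_{L^a} < \tfrac{1}{8}C_m^{-2}$ makes the quadratic $C_m x^2 - x + C_m\|f_0\|_{L^a}$ have two positive roots $\phi_\pm$ with a forbidden gap; by continuity and $\phi(0)=0$, $\phi$ is trapped in $[0,\phi_-]\subseteq [0, 2C_m\|f_0\|_{L^a}]$ throughout $[0,\tau^{\ast}]$, giving the claimed uniform bound. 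Finally, given a sequence $f_0^k$ with $\|f_0^k\|_{L^a}\to 0$, for each $m$ the smallness condition is eventually satisfied, so $\tau^{\ast}_k \ge \min(m\tau,T)$ for $k$ large, and taking $m\to\infty$ gives $\tau^{\ast}_k\to T$.

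The main obstacle is the exponent bookkeeping in the second paragraph: one must check simultaneously that $(q,r,p,a)$ and $(\tilde{q},\tilde{r},\tilde{p},a')$ are jointly admissible in the sense of Definition \ref{def: admissible}, that the Hölder balance forces $\tfrac{1}{\alpha}=\tfrac{r-1}{rd}$ with $\alpha \ge \max(\tilde{q}',q')$ required by Assumption \ref{ass:turning_kernel}, and that $\alpha$ sits in the Sobolev range $\alpha \le dp/(d-p)$. The restrictions $r\in(2,(d+3)/2]$ and $r\ge a\ge\max(d/2,d/(d-1))$ in the statement are precisely what is needed for this chain to close.
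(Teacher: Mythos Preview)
Your overall strategy matches the paper's: mild formulation, Strichartz via Lemma~\ref{lm: stoch_strich} with the dual exponents $(\tilde r,\tilde p,\tilde q)$, H\"older to extract $\|K(S)\|_{L^\alpha_x}$, Assumption~\ref{ass:turning_kernel}, elliptic estimates for $S$, and a continuity bootstrap. The exponent $\tfrac{1}{\alpha}=\tfrac{1}{d}-\tfrac{1}{rd}$ and the bootstrap argument are correct, and your packaging of the elliptic step via $\|S\|_{W^{2,p}}\le C\|\rho\|_{L^p}$ followed by the Sobolev embedding $W^{2,p}\hookrightarrow W^{1,\alpha}$ is a clean equivalent of the paper's explicit Bessel-potential route (Young plus Hardy--Littlewood--Sobolev at the endpoint $b=\tfrac{d}{d-1}$).

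There is, however, one genuine gap. You apply H\"older to the full source $g=\int_V\bigl(K(S)f'-K^{\ast}(S)f\bigr)\,dv'$ and assert that both pieces are controlled by $\|K(S)\|_{L^r_tL^\alpha_xL^{\tilde q'}_vL^{q'}_{v'}}\cdot\|f\|_{L^r_tL^p_xL^q_v}$. This is fine for the gain term $K(S)f'$, but fails for the loss term $K^{\ast}(S)f=f(t,x,v)\int_V K(S)(t,x,v',v)\,dv'$. To place this product in $L^{\tilde q'}_v$ by H\"older with $f$ in $L^q_v$ you would need $\tfrac{1}{\tilde q'}\ge\tfrac{1}{q}$, but from your own exponent definitions
\[
\frac{1}{q}-\frac{1}{\tilde q'}=\Bigl(\frac{1}{a}+\frac{1}{rd}\Bigr)-\Bigl(\frac{1}{a}-\frac{1}{d}+\frac{2}{rd}\Bigr)=\frac{1}{d}-\frac{1}{rd}=\frac{1}{\alpha}>0,
\]
so $\tilde q'>q$ and the H\"older step does not close. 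Compact support in $v$ does not rescue this, since the embedding on $V$ runs in the wrong direction.

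The paper resolves this by exploiting nonnegativity before applying Strichartz: since $f\ge 0$ and $K(S)\ge 0$, the loss term contributes with a minus sign and one has the pointwise bound
\[
f(t,x,v)\le f_0\circ\Psi_{0,t}(x,v)+\int_0^t\int_V K(S)f'\,dv'\circ\Psi_{s,t}(x,v)\,ds,
\]
so only the gain term $\int_V K(S)f'\,dv'$ enters the inhomogeneous Strichartz estimate. With this one-line reduction inserted at the start of your second paragraph, the rest of your argument goes through exactly as written.
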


\begin{proof}
	First, $f_0$ and $K$ are supported in a bounded domain in the velocity variable, and the deviation due to the stochastic flow is bounded by a constant $C\left(\left\lceil\frac{\tilde{\tau}}{\tau}\right\rceil \right)$ due to \eqref{lem: assumption_local_disp:eq:differentiation} and \eqref{lem: assumption_local_disp:eq:hoelder_continuity}. Consequently, the solution $f$ will be supported in a bounded domain $\tilde{V}$, with size bounded by $\left\vert\tilde{V}\right\vert \leq C(\left\vert V\right\vert, \left\lceil\frac{\tilde{\tau}}{\tau}\right\rceil)$. Since $K(S)$ and $f$ are always nonnegative, we obtain 
\begin{align*}
f(\omega,t,x,v)
&=	f_0 \circ \Phi_{0,t}^{-1} 
+	\int_{0}^{t}\int_{V}(K)^{\ast}(S)f - K(S)(f)' \d v' \circ \Phi_{s,t}^{-1} \d s \\
& \leq f_0 \circ \Phi_{0,t}^{-1} 
+	\int_{0}^{t}\int_{V}(K)^{\ast}(S)f \d v' \circ \Phi_{s,t}^{-1} \d s.
\end{align*}
Define the parameters $(\tilde{r}, \tilde{p}, \tilde{q})$ by 
	\begin{align*}
		&\frac{1}{\tilde{q}} = \frac{1}{a'} + \frac{1}{d} - \frac{2}{rd}, \quad \quad \frac{1}{\tilde{p}} = \frac{1}{a'} - \frac{1}{d} + \frac{2}{rd}, \quad \quad \frac{1}{\tilde{r}} = 1 - \frac{2}{r}. 
	\end{align*}
	Then, $(a,r,p,q)$ and $(a',\tilde{r}, \tilde{p}, \tilde{q})$ are jointly admissible tuples (comp. Definition \ref{def: admissible}).
	Thus, by applying the Strichartz estimates Lemma \ref{lm: stoch_strich}, we get for almost all $\omega$
	\begin{align}\label{lem_a-priori: eq_strich}
		\Vert f\Vert_{L_t^r L_x^p L_v^q}
		\le C\left(\frac{\tilde{ \tau}}{\tau}\right) \Vert f_0\Vert_{L_{x,v}^a} + C\left(\frac{\tilde{ \tau}}{\tau}\right)\left\Vert \int_{V} K(S) (f)' \d v'\right\Vert_{L_t^{\tilde{r}'} L_x^{\tilde{p}'} L_v^{\tilde{q}'}}.
	\end{align}
	To estimate the second term we apply Hölder's inequality and obtain
	\begin{align*}
		\int_{V} K(S)(t,x,v,v') f(t,x,v') \d v'
		\le \Vert K(S)(t,x,v,v')\Vert_{L_{v'}^{q'}} \cdot \Vert f(t,x,v')\Vert_{L_{v'}^{q}}.
	\end{align*}
	Thus, 
	\begin{align*}
		\left\Vert \int_{V} K(S) (f)' \d v'\right\Vert_{L_v^{\tilde{q}'}}
		\leq \Vert K(S)(t,x,v,v')\Vert_{L_v^{\tilde{q}'} L_{v'}^{q'}} \cdot \Vert f(t,x,v')\Vert_{L_{v'}^{q}}.		
	\end{align*}
	By Hölder's inequality and considering the relation 
	\begin{align*}
		\frac{1}{\tilde{p}'} = \frac{1}{p} + \frac{1}{\alpha},
	\end{align*}
	which leads to
	\begin{align*}
			\frac{1}{\alpha}
		= \frac{1}{d} - \frac{1}{rd}
	\end{align*}
	we obtain
	\begin{align*}
		\left\Vert \int_{V} K(S)(f)' \d v' \right\Vert_{L_x^{\tilde{p}'} L_v^{\tilde{q}'}}
		\le \Vert K(S)(t,x,v,v')\Vert_{L_x^\alpha L_v^{\tilde{q}'} L_{v'}^{q'}} \cdot \Vert f(t,x,v')\Vert_{L_x^p L_{v'}^q}.
	\end{align*}
	Since the parameters satisfy $r \le \frac{d+3}{2}$ and $a\geq \frac{d}{2} \geq \frac{d}{d-1}$ we estimate
	\begin{align*}
		\frac{1}{\alpha} = 	\frac{1}{d} - \frac{1}{rd} 
		\le	\frac{1}{\tilde{q}'} - \frac{d-2r+3}{dr}
		\le	\frac{1}{\tilde{q}'},
	\end{align*}
and
\begin{align*}
			\frac{1}{\alpha} 
	= 	\frac{1}{d} - \frac{1}{rd} 
		=	\frac{1}{q'} + \frac{d-ad+a}{ad}
		\le 	\frac{1}{q'}. 
	\end{align*}
 Therefore, with Assumption \ref{ass:turning_kernel} we obtain
	\begin{align*}
		\Vert K(S) \Vert_{L_{x}^{\alpha} L_{v}^{\tilde{q}'} L_{v}^{q'}}	
		&\le C(|V|, \tilde{q}', q') \left(\Vert S(t,x) \Vert_{L_x^\alpha} + \Vert \nabla S(t,x)\Vert_{L_x^\alpha}\right) \\
		&=	C(|V|, \tilde{q}', q') \left(\Vert G \ast \rho(t)\Vert_{L_x^\alpha} + \Vert \nabla G \ast \rho(t)\Vert_{L_x^\alpha}\right), 
	\end{align*}
	where $G$ is the Bessel potential. According to Aronszajn and Smith (\cite[ eq. 4.2 and 4.3 with $\alpha =2$]{AS1961}) the Bessel potential $G$ satisfies $G \in L^b$ for any $b < \frac{d}{d-2}$ and $\nabla G \in L^b$ for any $b < \frac{d}{d-1}$. 
	Define $b$ by $1 + \frac{1}{\alpha} = \frac{1}{b} + \frac{1}{p}$. The relation $\frac{1}{a} \le \frac{2}{d}$ implies
	\begin{align*}
		\frac{1}{b} 
		= 	1 + \frac{1}{\alpha} - \frac{1}{p} 
		=	1 + \frac{1}{d} - \frac{1}{rd} - \frac{1}{a} + \frac{1}{rd}
		\ge \frac{d + 1 -2}{d}
		=	\frac{d-1}{d}.
	\end{align*}
 Thus, the parameter $b$ is bounded by $b \le \frac{d}{d-1}$. This implies that $G \in L^b$ for all $b \le \frac{d}{d-1}$. To estimate $\Vert G \ast \rho(t)\Vert_{L_x^\alpha}$ we proceed by using Young's inequality
 	\begin{align*}
 	\Vert G \ast \rho(t) \Vert_{L_x^\alpha} 
 	\le \Vert G\Vert_{L^b} \cdot \Vert \rho\Vert_{L_x^p}	\le C(b) \cdot \Vert \rho(t,x)\Vert_{L_x^p}	
 	\overset{q \ge 1}{\le} 
 	C\left(b, q, |V|,\left\lceil\frac{\tilde{\tau}}{\tau}\right\rceil\right) \cdot \Vert f(t,x,v) \Vert_{L_x^p L_v^q}.
 \end{align*} 
Similarly, for $b<\frac{d}{d-1}$ we use Young's inequality
		\begin{align*}
		\Vert \nabla G \ast \rho(t) \Vert_{L_x^\alpha} \le 
		C\left(b, q, |V|,\left\lceil\frac{\tilde{\tau}}{\tau}\right\rceil\right) \cdot \Vert f(t,x,v) \Vert_{L_x^p L_v^q}.
	\end{align*} 
	If $b = \frac{d}{d-1}$, we estimate $\Vert \nabla G \ast \rho(t)\Vert_{L_x^\alpha}$ by using the
	Hardy-Littlewood-Sobolev inequality. Since $\Vert \nabla G(x)\Vert \le \frac{C}{|x|^{d-1}}$ for all $x$ and $\frac{1}{p} - \frac{1}{d} = \frac{1}{\alpha}$ with \cite[S. 119, Theorem 1]{S1970} we obtain
	\begin{align*}
		\Vert \nabla G \ast \rho\Vert_{L_x^\alpha}
		\le	\left\Vert \frac{C}{|x|^{d-1}} \ast \rho\right\Vert_{L_x^\alpha}
\le 
		C \Vert \rho \Vert_{L_x^p}
		\overset{q\ge 1}{\le} 
		C\left(q, |V|,\left\lceil\frac{\tilde{\tau}}{\tau}\right\rceil\right) \cdot \Vert f \Vert_{L_x^p L_v^q}. 
	\end{align*}
	Altogether, this shows
	\begin{align*}
		\left\Vert \int_{V} K(S) (f)' \d v' \right\Vert_{L_x^{\tilde{p}'} L_v^{\tilde{q}'}}
		\le C\left(a, |V|,\left\lceil\frac{\tilde{\tau}}{\tau}\right\rceil\right) \cdot \Vert f(t,x,v)\Vert_{L_x^p L_v^q}^2.
	\end{align*}
	Therefore, omitting the dependence on the parameters, we obtain
	\begin{align*}
		\left\Vert \int_{V} K(S) (f)' \d v' \right\Vert_{L_t^{\tilde{r}'} L_x^{\tilde{p}'} L_v^{\tilde{q}'}}
		&\le C\left(\left\lceil\frac{\tilde{\tau}}{\tau}\right\rceil\right)\left\Vert \Vert f(t,x,v)\Vert_{L_x^p L_v^q}^2 \right\Vert_{L_t^{\tilde{r}'}}\\
		&\overset{\tilde{r}' = \frac{r}{2}}{=}
		C\left(\left\lceil\frac{\tilde{\tau}}{\tau}\right\rceil\right)\Vert f(t,x,v)\Vert_{L_t^r L_x^p L_v^q}^2.
	\end{align*}
	Combining this with equation \eqref{lem_a-priori: eq_strich} we have shown \eqref{lem: a-priori_eq_1}.
To address the second part, we will employ a bootstrap argument for small initial data. 
Let $\tau^*\defeq \min(m \tau,T)$. Assume that there exists an $\varepsilon \in (0,1)$ such that $\Vert f_0 \Vert_{L_{x,v}^a} \le C\left( m \right)^{-2} \frac{\varepsilon}{8}$.
	Define $T(\varepsilon)$ to be the maximal existence time where the norm of $f$ remains sufficiently small
	\begin{align*}
		T(\varepsilon) := \sup\left\{ \tilde{T} \in [0,\tau^{\ast}]: \Vert f \Vert_{L_t^r((0,\tilde{T}), L_x^p L_v^q)} \le \frac{1}{2} \varepsilon C\left(\left\lceil\frac{\tilde{\tau}}{\tau}\right\rceil \right)^{-1}\right\}.
	\end{align*} 
 Since the $L_t^r([0,\tilde{T}], L_x^p L_v^q)$-norm of $f$ is continuous with respect to $\tilde{T}$, the maximal existence time $T(\varepsilon)$ is guaranteed to be positive. To establish that $T(\varepsilon)$ is indeed maximal, we will proceed by contradiction. 
	Assume that $T(\varepsilon)<\tau^{\ast}$. Due to \eqref{lem: a-priori_eq_1} and $\varepsilon^2 \leq \varepsilon$ we have
	\begin{align*}
		\Vert f \Vert_{L_t^r((0,T(\varepsilon)), L_x^p L_v^q)}
		&\le		C\left(\left\lceil\frac{\tilde{\tau}}{\tau}\right\rceil \right) \Vert f_0\Vert_{L_{x,v}^a}
		+ 	C\left(\left\lceil\frac{\tilde{\tau}}{\tau}\right\rceil \right) \Vert f \Vert_{L_t^r((0,T(\varepsilon)), L_x^p L_v^q)}^2\\
		&\le\frac{3\varepsilon}{8} C\left(\left\lceil\frac{\tilde{\tau}}{\tau}\right\rceil \right)^{-1} < \frac{1\varepsilon}{2} C\left(\left\lceil\frac{\tilde{\tau}}{\tau}\right\rceil \right)^{-1}. 
	\end{align*}
	Since the $L_t^r([0, \tilde{T}], L_x^p L_v^q)$-norm is continuous with respect to $\tilde{T}$ there is a $T^{\ast} > T(\varepsilon)$ such that
	$\Vert f \Vert_{L_t^r((0,T^{\ast}), L_x^p L_v^q)} \le \frac{1}{2} \varepsilon C\left(\left\lceil\frac{\tilde{\tau}}{\tau}\right\rceil \right)^{-1}$. 
	Thus, $T(\varepsilon)$ is indeed maximal, $T(\varepsilon) =\tau^{\ast}$, and there is $C$ independent of $\omega$ such that $\Vert f \Vert_{L_t^r([0,\tau^{\ast}], L_x^p L_v^q)} \le C$. \\
	Given $f_0^k$ with $\Vert f_0^k \Vert_{L_{x,v}^a} \le C(1)^{-2} \frac{1}{8}$ we find the maximal existence time $\tau^{\ast,k}$ with 
	\begin{align*}
		\tau^{\ast,k} = \min\left\{\max \left\{m \in \N: \Vert f_0 \Vert_{L_{x,v}^a} < C\left(m \right)^{-2} \frac{1}{8}\right\}\cdot\tau,T\right\}.
	\end{align*}
	It remains to show that $\tau^{\ast,k}$ converges to $T$ if the norms of the initial values $\norm{f_0^k}_{L^{a}(\R^{2d})}$ converge to zero.  
	For every $\omega$ there exists an integer $k(\omega) \in \N$ such that $T \leq k(\omega) \tau$. Therefore, we obtain that $\tau^{\ast,k(\omega)} = T$ which gives pointwise convergence. With $\delta$ specified in Remark \ref{rem:large_dev} and provided that $\zorm{f_0}{a}{a}$ is sufficiently small (where the smallness depends only on $\tau$ and is independent of $\omega$) we assert that	$\mathbb{P}\left(\tau^{\ast} = T \right) \geq 1-\delta$.
\end{proof}

Next, we show a regularity estimate with respect to time.
\begin{lem}\label{lem: hoelder_regularity_t}
		Let $d \ge 2$. Fix $T \in (0, \infty)$ and consider parameters $r,a,p,q$ such that 
	\begin{align*}
		r \in \left(2, \frac{d+3}{2}\right], 
		\quad
		r \ge a \ge 
		\max\left(\frac{d}{2}, \frac{d}{d-1}\right)
		\quad
		\frac{1}{p} = \frac{1}{a} - \frac{1}{rd}, \frac{1}{q} = \frac{1}{a} + \frac{1}{rd}.
	\end{align*} 
Moreover, consider parameters $\lambda$ and $\kappa$ with $\lambda >\frac{4r}{r-2}$ and 
$\kappa \lambda + 1 = \lambda\left(\frac{1}{2}-\frac{1}{r}\right)$.
Assume additionally, that $K$ and $f_0$ fulfill Assumption \ref{ass: regular}.  Then, for any nonnegative solution $f$ to \eqref{IVP_stoch} and all $\varphi \in C_c^{\infty}\left(\R^{2d}\right)$ we have $\mathbb{P}$-almost surely
\begin{align}\label{lem: hoelder_regualrity_t eq_1}
	&\mathbb{E} 	\left \lvert \langle f(t)-f(s),\varphi \rangle\right \lvert ^{\lambda}\notag\\
	&\leq C(\varphi,\sigma) |t-s|^{\lambda(\frac{1}{2}-\frac{1}{r})} \cdot \left(\left(1+|t-s|^{\frac{\lambda}{2}}\right)\norm{f}^{\lambda}_{L_t^rL_x^pL_v^q}+|t-s|^{\lambda(\frac{1}{2}-\frac{1}{r})} \norm{f}^{2\lambda} _{L_t^rL_x^pL_v^q}\right). 
\end{align}
Therefore, we obtain
\begin{align}\label{lem: hoelder_regualrity_t eq_2}
	\mathbb{E} 	\left[ \langle f,\varphi \rangle\right] ^{\lambda}_{\dot{W}_t^{\kappa,\lambda}} &= \int_0^{T} \int_0^{T}\frac{\mathbb{E} 	\left \lvert \langle f(t)-f(s),\varphi \rangle\right \lvert ^{\lambda}}{|t-s|^{\kappa \lambda + 1}} \d s \d t\notag\\
	& \leq C(\varphi, \sigma, T)\left(\mathbb{E}\norm{f}^{\lambda} _{L_t^rL_x^pL_v^q}+\mathbb{E}\norm{f}^{2\lambda}_{L_t^rL_x^pL_v^q}\right).
\end{align}
	\end{lem}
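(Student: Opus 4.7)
The plan is to test the weak formulation against $\varphi$ in Itô form; the Stratonovich-to-Itô correction on the test function produces exactly the drift $\mathcal{L}_\sigma\varphi$, so that
\begin{align*}
\langle f(t)-f(s),\varphi\rangle
&= \int_s^t \langle f_u, v\cdot\nabla_x\varphi + \mathcal{L}_\sigma\varphi\rangle\, du
+ \int_s^t \langle g_u,\varphi\rangle\, du\\
&\quad + \sum_k \int_s^t \langle f_u,\sigma^k\cdot\nabla_v\varphi\rangle\, d\beta^k_u.
\end{align*}
I would then estimate the $\lambda$-th moment of the three contributions separately and add them.

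For the two drift terms I would apply Hölder duality in $(x,v)$ --- using the compact support of $\varphi$ together with boundedness of $v\cdot\nabla_x\varphi+\mathcal{L}_\sigma\varphi$ on that support, guaranteed by the $B^{0,1}_b$-regularity in Assumption \ref{ass: ex_flow} --- followed by Hölder in time with exponent $r$, giving the pathwise bound $C(\varphi,\sigma)|t-s|^{1-1/r}\norm{f}_{L^r_tL^p_xL^q_v}$. For the turning term I would recycle the duality chain from the proof of Lemma \ref{lem:a-priori} producing $\zorm{g}{\tilde{p}'}{\tilde{q}'}\le C\zorm{f}{p}{q}^2$; together with Hölder in time with exponent $\tilde r'=r/2$ and the compact support of $\varphi$ this gives $C(\varphi)|t-s|^{1-2/r}\norm{f}^{2}_{L^r_tL^p_xL^q_v}$. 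For the stochastic martingale I would apply Burkholder--Davis--Gundy, bound $\sum_k |\langle f_u,\sigma^k\cdot\nabla_v\varphi\rangle|^2$ by $C(\varphi,\sigma)\norm{f_u}^2_{L^p_xL^q_v}$ --- using Hölder duality together with the boundedness of $\sum_k \sigma^k\otimes\sigma^k=2a$ on the compact support of $\nabla_v\varphi$ --- and close with Hölder in time with exponent $r/2$ to arrive at the $\lambda$-th moment bound $C(\varphi,\sigma)|t-s|^{\lambda(1/2-1/r)}\mathbb{E}\norm{f}^\lambda_{L^r_tL^p_xL^q_v}$.

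Summing and using the elementary identities $1-\tfrac1r=(\tfrac12-\tfrac1r)+\tfrac12$ and $1-\tfrac2r=2(\tfrac12-\tfrac1r)$, the common prefactor $|t-s|^{\lambda(1/2-1/r)}$ factors out and \eqref{lem: hoelder_regualrity_t eq_1} follows. The Sobolev--Slobodeckij bound \eqref{lem: hoelder_regualrity_t eq_2} is then immediate: plugging the pointwise-in-$(t,s)$ estimate into the defining double integral, the choice $\kappa\lambda+1=\lambda(\tfrac12-\tfrac1r)$ cancels the singular kernel exactly, leaving bounded integrands $1+|t-s|^{\lambda/2}$ and $|t-s|^{\lambda(1/2-1/r)}$, so Fubini absorbs the integration over $[0,T]^2$ into a deterministic constant $C(\varphi,\sigma,T)$. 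The main technical obstacle is controlling the $\ell^2_k$-summation inside the BDG step and verifying that $\sum_k|\sigma^k|^2$ remains bounded on $\mathrm{supp}\,\varphi$, both of which rest squarely on the $B^{0,1}_b$-regularity assumed in Assumption \ref{ass: ex_flow}.
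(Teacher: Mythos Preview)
Your proposal is correct and follows essentially the same approach as the paper: the same three-term decomposition of $\langle f(t)-f(s),\varphi\rangle$ in It\^o form, the same H\"older-in-time exponents ($r$ for the linear drift, $\tilde r'=r/2$ for the turning term via the duality chain of Lemma~\ref{lem:a-priori}, and $r/2$ after BDG for the martingale), and the same factorisation of $|t-s|^{\lambda(1/2-1/r)}$ leading to \eqref{lem: hoelder_regualrity_t eq_1} and then \eqref{lem: hoelder_regualrity_t eq_2}. Your added remark on controlling $\sum_k|\sigma^k|^2$ on $\mathrm{supp}\,\varphi$ via the $B^{0,1}_b$-assumption is a detail the paper leaves implicit.
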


\begin{proof}
Consider $g$ defined by $g\defeq \int_V K(S)(f)'\d v' - \int_V (K)^{\ast}(S)\d v' f$. We rewrite
	\begin{align}	\begin{split}\label{eq:thm_main:regularity_t}
			&\left \lvert \langle f(t)-f(s),\varphi \rangle\right \lvert \\&= \int_s^t\int_{\R^d}\int_{\R^d} (v\nabla_x \varphi + \mathcal{L}_{\sigma} \varphi) f \d v \d x \d u + \int_s^t\int_{\R^d}\int_{\R^d} \varphi g \d v \d x \d u \\&+ \sum_{k \in \N} \int_s^t\int_{\R^d}\int_{\R^d} f\sigma^k \nabla_v \varphi \d v \d x \d \beta^k(u).
		\end{split}
	\end{align}
	By Hölder's inequality with $\frac{1}{\tilde{r}}=1-\frac{2}{r}$ and a similar calculation to Lemma \ref{lem:a-priori} we estimate the first and second term in \eqref{eq:thm_main:regularity_t} to get $\mathbb{P}$-a.s. 
	\begin{align*}
		\left \lvert\int_s^t\int_{\R^d}\int_{\R^d} (v\nabla_x \varphi + \mathcal{L}_{\sigma} \varphi) f \d v \d x \d u\right \rvert^{\lambda} &\leq C({\varphi,\sigma})\left \lvert t-s \right \rvert ^{\lambda(1-\frac{1}{r})}\norm{f}^{\lambda}_{L_t^rL_x^pL_v^q},\\
		\left \lvert \int_s^t\int_{\R^d}\int_{\R^d} \varphi g \d v \d x \d u \right \rvert ^{\lambda} &\leq C({\varphi,\sigma}) |t-s|^{{\lambda}(1-\frac{2}{r})}\norm{g}^{\lambda} _{L_t^{\tilde{r}'}L_x^{\tilde{p}'}L_v^{\tilde{q}'}}\\
		&\leq C({\varphi,\sigma}) |t-s|^{{\lambda}(1-\frac{2}{r})}\norm{f}^{2\lambda} _{L_t^rL_x^pL_v^q}.
	\end{align*}
	Using the Burkholder-Davis-Gundy inequality for the last term in \eqref{eq:thm_main:regularity_t} we obtain that
	\begin{align*}
		\mathbb{E}	\left \lvert \sum_{k \in \N} \int_s^t\int_{\R^d}\int_{\R^d} f\sigma^k \nabla_v \varphi \d v \d x \d \beta^k(u) \right \rvert ^{\lambda}& \leq \mathbb{E}	\left(\int_s^t\sum_{k \in \N} \left(\int_{\R^d}\int_{\R^d} f\sigma^k \nabla_v \varphi \d v \d x\right)^2 \d u \right) ^{\frac{\lambda}{2}} \\
		&\leq C({\varphi,\sigma}) |t-s|^{\frac{\lambda}{2}(1-\frac{2}{r})} \norm{f}^{\lambda} _{L_t^rL_x^pL_v^q}.
	\end{align*}		
	Thus, we conclude that 
	\begin{align*}
		&\mathbb{E} 	\left \lvert \langle f(t)-f(s),\varphi \rangle\right \lvert ^{\lambda}\\
		&\leq C(\varphi,\sigma) |t-s|^{\lambda(\frac{1}{2}-\frac{1}{r})} \cdot \left(\left(1+|t-s|^{\frac{\lambda}{2}}\right)\norm{f}^{\lambda}_{L_t^rL_x^pL_v^q}+|t-s|^{\lambda(\frac{1}{2}-\frac{1}{r})} \norm{f}^{2\lambda} _{L_t^rL_x^pL_v^q}\right). 
	\end{align*}
	Therefore, for $\lambda >\frac{4r}{r-2}$ and 
	$\kappa \lambda + 1 = \lambda\left(\frac{1}{2}-\frac{1}{r}\right)$, we have \eqref{lem: hoelder_regualrity_t eq_2}.
\end{proof}

\subsection{Proof of Theorem \ref{thm:main}}
In this section, by finding solutions to regularized systems, using the above a-priori-estimates and weak compactness we prove the desired existence result. We will prove both parts of Theorem \ref{thm:main} simultaneously since they only differ in the considered time interval. 

\begin{proof}[Proof of Theorem \ref{thm:main}]
	\textbf{Step 1:}\textit{ Approximating solution.} Given a stochastic chemotactic equation with turning kernel $K$ that satisfies Assumption \ref{ass:turning_kernel} and a nonnegative initial value $f_0 \in L^1(\R^{2d}) \cap L^{a}(\R^{2d})$, we construct sequences $f_0^n$ and $K^n$ that meet the requirements of Assumption \ref{ass: regular} and in addition strongly converge to $f_0$ and $K$ in the appropriate mixed $L^p-$space. 

 We recall that the parameters $a,r,p,q$ are given as stated in Theorem \ref{thm:main}. We further recall that the parameters $\alpha$, $\tilde{q}$, $\tilde{p}$ and $\beta$ are given by $\frac{1}{\alpha}= \frac{1}{d} - \frac{1}{rd}$,  $\frac{1}{\tilde{q}} = \frac{1}{a'} + \frac{1}{d} - \frac{2}{rd}$,  $\frac{1}{\tilde{p}} = \frac{1}{a'} - \frac{1}{d} + \frac{2}{rd}$ and $\frac{1}{\beta} = 1 -  \frac{1}{\alpha}$. The parameters  $\tilde{q}'$ and $\tilde{p}'$ are the dual parameters to  $\tilde{q}'$ and $\tilde{p}'$ and defined by $\frac{1}{\tilde{q}'} = 1- \frac{1}{\tilde{q}}$ and $\tilde{p}'$ analogue.\\
	
	For the sequence $K^n$, we start by truncating $K$ to ensure boundedness and compact support by $\tilde{K}^n(S)(t,x,v,v')\defeq \mathbf{1}_{\{\vert x\vert \leq n,\vert t\vert \leq n,\vert v\vert \leq n,\vert v'\vert \leq n\}}\cdot \mathbf{1}_{\{\norm{K(S)}_{L_{t,x,v,v'}^\infty}\leq n\}} \cdot K(S)(t,x,v,v')$. Next, we truncate by  $\hat{K}^n(S)(t,x,v,v')\defeq \mathbf{1}_{\{\norm{S}_{L_t^{\infty}L_x^{1}}\leq n\}}\tilde{K}^n(S)(t,x,v,v')$
	 and then approximate $\hat{K}^n(S)$ which is Lebesgue integrable by convolution with a smoothing kernel. More precisely, let $\eta(x)=\begin{cases}
	 	c \cdot \exp(-\frac{1}{1-|x|^2}), \quad & |x| < 1\\
	 	0, \quad & \text{else}
	 \end{cases}$ and define $K^n(S) =\hat{K}^n(S)\ast \eta^n$, where $\eta^n(t,x,v,v')=n\cdot \eta(n\cdot t)\cdot n^d\cdot \eta(n\cdot x)\cdot n^d\cdot \eta(n\cdot v)\cdot n^d\cdot \eta(n\cdot {v'})$. Thus, $K^n(S)$ is a smooth function  on a compact set supported in the compact set $V^n\subseteq \R^d$  in the velocity variables with size $|V^n| \leq C(|V|)$. 
	 $K^n(S)$ converges to $K(S)$ in $L_t^rL_x^{\alpha}(B)L_v^{\tilde{q}'}L_{v'}^q\cap L_t^rL_x^{\alpha}(B)L_v^{\alpha}L_{v'}^1 $  for all $S$ in $L_t^rL_x^{\alpha}(B)$ and all compact $B \subseteq \R^d$ \cite{S2013}[Theorem 3.14].  \\
	 Furthermore, as demonstrated in the proof of Lemma \ref{lem:a-priori}, the parameter $\alpha$ is at least as large as the parameters $q$, $\tilde{q}'$ and at least $1$, thereby satisfying Assumption \ref{ass:turning_kernel}. Consequently, we can bound both the $L_t^r[L_x^{\alpha}]L_v^{\tilde{q}'}L_{v'}^q$-norm and the $L_t^r[L_x^{\alpha}]L_v^{\alpha}L_{v'}^1 $-norm of $K(S)$ and $K^n(S)$ uniformly in terms of the $L_t^r[L_x^{\alpha}]$-norms of $S$ and $\nabla S$.\\  	
	 To construct an appropriate sequence $f_0^n$, we set $\tilde{f}_0^n = \mathbf{1}_{\{f_0 \leq n\}}f_0$ 
	 and then set $\hat{f_0^n} = \tilde{f}_0^n+\frac{1}{n}\norm{f_0}_{L_{x,v}^a} (2\pi)^{-\frac{d}{2}} \exp(-\vert x \vert^2 - \vert v \vert ^2) \mathbf{1}_{\bar{V^n}}$
	 	 to ensure, that $f_0^n$ is positive on $\bar{V^n}$ and to bound $\norm{f_0^n}_{L_{x,v}^a}$ in terms of $\norm{f_0}_{L_{x,v}^a}$. Finally, we approximate $\hat{f}_0^n$, which is Lebesgue integrable by convolution with a smoothing kernel. More precisely, let $f_0^n = \hat{f_0^n}\ast\eta^n$ with $\eta^n(x,v)=n^d\cdot \eta(n\cdot x)\cdot n^d\cdot \eta(n\cdot v)$. 
 With this choice, we get $\norm{f_0^n}_{L_{x,v}^a} \leq \norm{f_0}_{L_{x,v}^a}(1+\frac{1}{n}) \leq 2\norm{f_0}_{L_{x,v}^a}$ and $\Vert f_0^n\Vert_{L_{x,v}^1} \le \frac{n+1}{n}\Vert f_0\Vert_{L_{x,v}^1}$ and $f_0^n$ converges to $f_0$ in $L_{x,v}^a\cap L_{x,v}^1$.\\
Consequently, applying Lemma \ref{lem: approx_sol_measurable}, we obtain a solution $\tilde{f}^n$ of \eqref{IVP_stoch} with $K$ and $f_0$ replaced by $K^n$ and $f_0^n$, respectively.
	
	\textbf{Step 2:} \textit{Tightness of $\tilde{f}^n$.} By Lemma \ref{lem:a-priori}, given that $\norm{f_0}_{L_{x,v}^a}$ is sufficiently small, there exists a stopping-time $\tilde{\tau}$ with $\tau \leq\tilde{\tau}\leq T$ and $C$ such that for almost all $\omega$ and for all $n $ we get
	\begin{align*}
		\Vert \tilde{f}^n \Vert_{L_t^r([0,\tilde{\tau}], L_x^p L_v^q)}\le C.
	\end{align*}
	If $\tau$ is deterministic, given that $\norm{f_0}_{L_{x,v}^a}$ is sufficiently small, we set $\tilde{\tau}= T$. 
	The space $L_t^r([0,\tilde{\tau}], L_x^p L_v^q)$ is reflexive. Thus, by the theorem of Banach-Alaoglu the set 
	\begin{align*}
		K\defeq \{f \in L_t^r([0,\tilde{\tau}], L_x^p L_v^q): 	\Vert f \Vert_{L_t^r([0,\tilde{\tau}], L_x^p L_v^q)}\le C \}
	\end{align*}
	is compact with respect to the weak topology. 
	Moreover, by Lemma \ref{lem:conservation of mass}, the norm $\norm{\tilde{f}^n(t)}_{L_{x,v}^1}$ is $\mathbb{P}$-almost surely uniformly bounded by $M$ for all $t$ and all $n$. 
	The set 	\begin{align*}
		\tilde{K}\defeq \{f(t) \in ((L_x^1 L_v^1)')': \norm{f(t)}_{((L_x^1 L_v^1)')'} \leq M \}
	\end{align*}
	is compact with respect to the weak* topology in the bidual space of $L_{x,v}^1$. To show relative compactness, by \cite[Theorem 3.9.1]{EK1986} it is enough to show that $\{\int \varphi \tilde{f}^n \d x\d v\}$ is relatively
	compact (as a family of processes with sample paths in $C_{\R}((0,T))$ for
	each $\varphi \in C_c(\R^{2d})$. The choice of such linear functionals is appropriate since we consider the bidual space of $L_{x,v}^1$. Due to \cite[Theorem 3.7.2]{EK1986} it is enough to show that for every $\eta > 0$ there exists $\delta >0$ such that 
	\begin{align*}
\mathbb{P}\left(\sup _ n \inf_{\{t_i\}} \max _i \sup_{s,t \in [t_{i-1},t_i)}\left \vert\int \tilde{f}^n(t) \varphi \d x \d v - \int \tilde{f}^n(s) \varphi \d x \d v\right \vert \geq \eta\right)\leq \eta
	\end{align*}
		where ${t_i}$ ranges over all partitions of the form $0 = t_0 < t_1 < \dots < t_{k-1} < T \leq t_k$ with $\min_{1\leq i\leq k }(t_i-t_{i-1}) > \delta
$ and $k \geq 1$. This is true by Hölder continuity of the paths by Lemma \ref{lem: hoelder_regularity_t}.

	\textbf{Step 3:} \textit{Tightness of $\tilde{S}^n$.} Let $ B \subseteq \R^d$ be an arbitrary compact set. Next, we aim to show that the random sequence $\tilde{S}^{n}\cdot \mathbf{1}_B$ with values in $L_t^r([0,\tilde{\tau}],[L_x^{\alpha}](B))$ where $\alpha$ is specified in the proof of Lemma \ref{lem:a-priori} is tight with respect to the strong topology. 
	Define the set
	\begin{align*}
		E^l &= \left\{S \in L_t^rL_x^{\alpha} : \norm{\nabla_x S }_{L_t^rL_x^{\alpha}} \leq l, \norm{ S }_{L_t^rL_x^{\alpha}} \leq l\right\}.
	\end{align*}
	Let $\{\varphi_j\}\subseteq C_c^{\infty}(\R^d)$ be a dense subset of $L_x^{{\alpha}'}$ and $\{N^j\}_{j \in \N}$ be a positive, real-valued sequence to be chosen later. Moreover, let $\lambda$ and $\kappa$ satisfy $\lambda >\frac{4r}{r-2}$ and 
	$\kappa \lambda + 1 = \lambda\left(\frac{1}{2}-\frac{1}{r}\right)$. Now, consider the Sobolev-space $W_t^{\kappa, \lambda}([0,T],\R) $ and define the set
	\begin{align*}
		F^l & = \bigcap_{j=1}^{\infty}\left\{S \in L_t^r([0,\tilde{\tau}],L_x^{\alpha}) : \left \lVert \langle S,\varphi_j\rangle \right \rVert_{\dot{W}_t^{\kappa, \lambda}} \leq (lN^j)^{\frac{1}{\lambda}}, \norm{ S }_{L_t^rL_x^{\alpha}} \leq l \right\}.
	\end{align*}
	Let $A^l = E^l\cap F^l$. 
	We aim to show that $A^l$ is a relatively compact set in $L_t^r([0,\tilde{\tau}],L_x^{\alpha}(B))$ with
	\begin{align*}
		\lim_{l\rightarrow \infty} \sup_n \tilde{\mathbb{P}}\left(\tilde{S}^n \notin A^l\right) = 0.
	\end{align*} 
	Therefore, we need to show certain uniform bounds. Firstly, using Lemma \ref{lem:a-priori} and $q \geq 1$ we have 
	\begin{align*}
		\norm{\tilde{S}^n}_{L_t^r([0,\tilde{\tau}],L_x^{\alpha})} \leq \norm{G\ast \tilde{\rho}_n}_{L_t^r([0,\tilde{\tau}],L_x^{\alpha})} \leq \norm{G}_{L_x^b}\cdot\norm{\tilde{\rho}_n}_{L_t^r([0,\tilde{\tau}],L_x^p)} \leq C \norm{\tilde{f}^n}_{L_t^r([0,\tilde{\tau}],L_x^pL_v^q)}\leq C 
	\end{align*}
$\tilde{\mathbb{P}}$-a.s. uniformly in $n$. 
Secondly, using the equation for $\tilde{S}^n$ and, following the approach used in the proof of Lemma \ref{lem:a-priori}, we obtain that there exists a constant $C$, independent of $n$ and $\omega$, such that $\mathbb{P}$-a.s. 
	\begin{align}\label{ineq: grad_S}
		\norm{\nabla_x \tilde{S}^n}_{L_t^r([0,\tilde{\tau}],L_x^{\alpha})}	\leq C.
	\end{align} 
Applying the Chebyshev inequality together with the above inequality \eqref{ineq: grad_S}, we obtain that 
\begin{align*}
	\mathbb{P} \left\{\tilde{S}^n \notin E^l\right\} \leq \frac{C}{l}. 
\end{align*}
	Finally, with Lemma \ref{lem: hoelder_regularity_t} we have 
		\begin{align*}
		\tilde{\mathbb{E} }	\left[ \langle \tilde{f}^n,\varphi \rangle\right] ^{\lambda}_{\dot{W}_t^{\kappa,\lambda}} &= \int_0^{\tilde{\tau}} \int_0^{\tilde{\tau}}\frac{\tilde{\mathbb{E} }	\left \lvert \langle \tilde{f}^n(t)-\tilde{f}^n(s),\varphi \rangle\right \lvert ^{\lambda}}{|t-s|^{\kappa \lambda + 1}} \d s \d t\notag\\
		& \leq C(\varphi, \sigma, T)\left(\tilde{\mathbb{E}}\norm{\tilde{f}^n}^{\lambda} _{L_t^r([0,\tilde{\tau}],L_x^pL_v^q)}+\tilde{\mathbb{E}}\norm{\tilde{f}^n}^{2\lambda}_{L_t^r([0,\tilde{\tau}],L_x^pL_v^q)}\right) \leq C(\varphi,\sigma,T,V).
		\end{align*}
Furthermore, we rewrite
\begin{align}\label{eq: convolution S}
	\langle S(t)-S(s),\varphi_j\rangle =\langle G \ast (\rho(t)-\rho(s)),\varphi_j\rangle = \langle \rho(t)-\rho(s),G\ast\varphi_j\rangle =\langle f(t)-f(s),1_V \times G\ast\varphi_j\rangle. 
\end{align}

Choosing $N^j = 2^jC(\varphi_j, \sigma,T,V)$ and using Lemma \ref{lem: hoelder_regularity_t} together with \eqref{eq: convolution S} we obtain that 
	\begin{align*}
	\tilde{	\mathbb{P}} \left\{\tilde{S}^n \notin F^l\right\} \leq \sum_{j=1}^{\infty}\frac{C(\varphi_j)}{lN^j}\leq \frac{1}{l}\sum_{j=1}^{\infty} 2^{-j} = \frac{1}{l}.
	\end{align*}
	Taking $l \rightarrow \infty$ gives 
	\begin{align*}
		\lim_{l\rightarrow \infty} \sup_n \tilde{\mathbb{P}}\left(\tilde{S}^n \notin A^l\right) = \lim_{l\rightarrow \infty} \frac{1}{l} = 0.
	\end{align*}
	It remains to show that $A^l$ is relatively compact in $L_t^r((0,\tilde{\tau}],L_x^\alpha(B))$. First, by \cite[Theorem 7.1 and Proposition 2.2]{DPV2012} the set $E^l_t \defeq \{S(t, \cdot): S \in E^l\}$ is relatively compact in $L_x^{\alpha}(B)$ for all $t$. Secondly, note that $\lambda > \frac{4r}{r-2}$ implies $\kappa > \frac{1}{\lambda}$.
	Thus, due to \cite[Lemma 5]{S1987} the set $F^l$ is equicontinuous in $t$ and therefore, by a variant of Kolmogorov-Fr\'{e}chet theorem for Banach-spaces \cite[Theorem 1]{S1987} the set $A^l$ is relatively compact in $L_t^r((0,\tilde{\tau}],L_x^\alpha(B))$.
	
\textbf{Step 4:} \textit{Jakubowski space} We next validate that $$\mathcal{X}\defeq [L_t^r([0,\tilde{\tau}],L_x^pL_v^q)]_w\cap C_t([L_{x,v}^1]_w)\times L_t^r([0,\tilde{\tau}],L_x^{\alpha}(B))\times C([0,T],\ell^2)\times [0,T]$$ satisfies the topological assumption in \cite[Theorem 2]{J1997}, that is, that there is  a countable family of functions $\{h_i:\mathcal{X}\rightarrow [-1,1]\}_i$ that is continuous with respect to the corresponding topology and separates points of $\mathcal{X}.$ Each Lebesgue-space $L^p$, respectively $\ell^p$ with $1\leq p<\infty$ is separable and thus, contains a countable, dense subset. Denote by  $\{e^1_i\}_i$ this countable dense subset of $L_t^r([0,\tilde{\tau}],L_x^pL_v^q)$, and, by $\{e^2_i\}_i$ this countable dense subset of $L_x^1L_v^1$,  and, by $\{e^3_i\}_i$ this countable dense subset of $L_t^r([0,\tilde{\tau}],L_x^{\alpha}(B))$, and by $\{e^4_i\}_i$ this countable dense subset of $\ell^2$.  With $\{t_k\}_k = \mathbb{Q}\cap [0,T]$, we define $h_{i_1}(f,S, \beta, \tau)\defeq \norm{f-e^1_i}_{L_t^rL_x^pL_v^q}$, and,  $h_{i_{2_k}}(f,S, \beta, \tau)\defeq \norm{f(t_k)-e^2_i}_{L_x^1L_v^1}$, and  $h_{i_3}(f,S, \beta, \tau)\defeq \norm{S-e^3_i}_{L_t^rL_x^{\alpha}}$, and \\$h_{i_{4_k}}(f,S, \beta, \tau)\defeq \norm{\beta(t_k)-e^4_i}_{\ell^2}$, and $h_{1_5}(f,S, \beta, \tau)\defeq \tau$. With this definitions and $C$ the maximal constant of step $2$ and $3$ the countable set $\frac{1}{T\cdot C}\{h_{i_1},h_{i_{2_k}},h_{i_3},h_{i_{4_k}},h_{1_5}\}_{i_k}$ separates points of $\mathcal{X}$.

\textbf{Step 5:} \textit{Weak and strong convergence of the linear terms.} 
	As a result of Steps 2 and 3, $(\tilde{f}^n,\tilde{S}^n,\{\tilde{\beta}^k\}_k , \tilde{\tau})$ induces tight laws on $\mathcal{X}$.  
	Applying the Skorohod embedding theorem \cite[Theorem 2]{J1997} and working on a subsequence if necessary, there are a new probability space $(\Omega,\mathcal{F},\mathbb{P})$, random variables $(f, S,\{\beta^k\}_k,\tau^{\ast})$, with values in $[L_t^r([0,\tau^{\ast}],L_x^pL_v^q)]_w\cap C_t([L_{x,v}^1]_w)\times L_t^r([0,\tau^{\ast}],L_x^{\alpha}(B))\times C([0,T],\ell^2)\times [0,T]$
 and, a sequence of measurable maps $\tilde{T}^n:\Omega \rightarrow \tilde{\Omega}$ 
such that $(f^n \defeq \tilde{f}^n \circ \tilde{T}^n,S^n\defeq \tilde{S}^n \circ \tilde{T}^n, \{\beta^{n,k}\}_k\defeq \{\tilde{\beta}^{k}\circ \tilde{T}^n\}_k,\tau^{\ast,n}\defeq \tilde{\tau}\circ \tilde{T}^n)$ converges to  $(f, S,\{\beta^k\}_k,\tau^{\ast})$ for all $\omega \in \Omega$. For the explicit construction of the  sequence of maps $\tilde{T^n}$ by using the above defined family of maps $\{h_i\}_i$ we refer to the proof of \cite[Theorem 2]{J1997}. Since $\tilde{f}^n$ is a stochastically strong solution, $f^n$ is also a weak martingale solution to \eqref{IVP_stoch} starting from $f_0$ with noise coefficients $\sigma^k$ with respect to the stochastic basis $(\Omega, \mathcal{F},\mathbb{P},(\mathcal{F}_t^n)_t,\{\beta^{k,n}\}_k)$, where $\mathcal{F}_t^n = (\tilde{T}^n)^{-1} \circ \tilde{\mathcal{F}}_t$. Moreover, $\tau^{\ast,n}$ and $\tau^{\ast}$ are $(\mathcal{F}_t)_t$ stopping-times, since $\tilde{\tau}$ is an $(\tilde{\mathcal{F}}_t)_t$ stopping-time. 

\textbf{Step 6:} \textit{Weak convergence of the nonlinear terms.} 
For any $\varphi \in C_c^{\infty}(\R^d \times V)$, and all $t \in [0, \tau^{\ast}]$, and all $\omega \in \Omega$ we show, that   
\begin{align}
&\int_{0}^{t} \int_{\R^d}\int_{V} \int_{V} K^n(S^n) (f^n)' - (K^n)^{\ast}(S^n)f^n\d v'\varphi(x,v)\d v \d x \d s \notag\\
&\rightarrow \int_{0}^{t} \int_{\R^d}\int_{V} \int_{V} K(S) (f)' - (K)^{\ast}(S)f\d v'\varphi(x,v)\d v \d x \d s.\label{eq: weak_convergence}
 \end{align}
Indeed, we estimate 
\begin{align*}
	&\left \vert \int_{0}^{t} \int_{\R^d}\int_{V}\left[ \int_{V} K^n(S^n) (f^n)' - (K^n)^{\ast}(S^n)f^n\d v'- \int_{V} K(S) f' - K^{\ast}(S)f \d v' \right]\varphi(x,v)\d v \d x \d s \right \vert \\
	& \leq \left \vert \int_{0}^{t\ } \int_{\R^d}\int_{V}\left[ \int_{V} K^n(S^n) (f^n)' -K(S) (f^n)' \d v' \right]\varphi(x,v)\d v \d x \d s \right \vert \\
	& + \left \vert \int_{0}^{t } \int_{\R^d}\int_{V}\left[ \int_{V} K(S) (f^n)' -K(S) f' \d v' \right]\varphi(x,v)\d v \d x \d s \right \vert \\
	& + \left \vert \int_{0}^{t} \int_{\R^d}\int_{V}\left[ \int_{V} K^{\ast}(S)f -K^{\ast}(S)f^n \d v' \right]\varphi(x,v)\d v \d x \d s \right \vert \\
	& + \left \vert \int_{0}^{t} \int_{\R^d}\int_{V}\left[ \int_{V} K^{\ast}(S)f^n -(K^n)^{\ast}(S^n)f^n \d v' \right]\varphi(x,v)\d v \d x \d s \right \vert \\
	&= I+II+III+IV.
\end{align*} 
By continuity of $K$ in $L^{\alpha}(B)$ and since $K^n(S) \rightarrow K(S)$ strongly in $L_t^r([0,\tau^{\ast}],L_x^{\alpha}(B)L_v^{\tilde{q}'}L_{v'}^q)\cap L_t^r([0,\tau^{\ast}],L_x^{\alpha}(B)L_v^{\alpha}L_{v'}^1)$ for every $\varepsilon > 0$ and almost all $\omega$, there exists an integer $n_0 \in \N$ such that for all integers $n \geq n_0$ and all $t\leq \tau^{\ast}$ we have 
\begin{align*}
	I &\leq t^{\tilde{r}}\norm{K^n(S^n) -K(S) }_{L_t^r([0,\tau^{\ast} ],L_x^{\alpha}(B)L_v^{\tilde{q}'}L_{v'}^q)}\norm{f^n}_{L_t^r([0,\tau^{\ast}],L_x^{p}L_v^{q})}\norm{\varphi}_{L_x^{\tilde{p}}L_v^{\tilde{q}}}\\
	&\leq C_{\varphi,t} \left(\norm{K^n(S^n) -K(S^n) }_{L_t^r([0,\tau^{\ast}],L_x^{\alpha}(B)L_v^{\tilde{q}'}L_{v'}^q)} + \norm{K(S^n) -K(S) }_{L_t^r([0,\tau^{\ast}],L_x^{\alpha}(B)L_v^{\tilde{q}'}L_{v'}^q)}\right)\\
	& < \frac{\varepsilon}{4}.
\end{align*}
Since $V$ and $[0,t]$ are compact and $f^n$ is weakly convergent in $L_t^r([0,\tau^{\ast}]L_x^pL_v^q$ for the second term we obtain 
\begin{align*}
	II &\leq \frac{\varepsilon}{8}+\left \vert \int_{V}\int_{0}^{t} \int_{\R^d}\int_{V} K(S) (f^n(s,x,v') - f(s,x,v')) \tilde{\varphi}(s,x,v,v')\d v' \d x \d s \d v\right \vert 
	 < \frac{\varepsilon}{4}.
\end{align*}
Analogously to the second term, for the third term we have
\begin{align*}
	III &\leq \frac{\varepsilon}{8}+\left \vert \int_{0}^{t} \int_{\R^d}\int_{V} \int_{V}K^{\ast}(S) \d v' (f^n(s,x,v) - f(s,x,v)) \tilde{\varphi}(s,x,v)\d v \d x \d s \right \vert < \frac{\varepsilon}{4}.
\end{align*}
We treat the fourth term similar to the first term. Thus, we obtain
\begin{align*}
	IV &\leq t^{\tilde{r}}\norm{ K(S) -K^n(S^n) }_{L_t^r([0,\tau^{\ast}],L_x^{\alpha}(B)L_v^{\alpha}L_{v'}^1)}\norm{f^n}_{L_t^r([0,\tau^{\ast}],L_x^{p}L_v^{q})}\norm{\varphi}_{L_x^{\tilde{p}}L_v^{\beta}}\\
	&\leq C_{\varphi,t}\left(\norm{K(S) -K(S^n) }_{L_t^r([0,\tau^{\ast}],L_x^{\alpha}(B)L_v^{\alpha}L_{v'}^1)}+ \norm{K(S^n) -K^n(S^n) }_{L_t^r([0,\tau^{\ast}],L_x^{\alpha}(B)L_v^{\alpha}L_{v'}^1)}\right)\\
	& < \frac{\varepsilon}{4}.
\end{align*}
Combining these estimates, we obtain the weak convergence of \eqref{eq: weak_convergence}. 
Furthermore, by a similar calculation as for the a-priori estimates in Lemma \ref{lem:a-priori} we obtain $\mathbb{P}$-almost surely \begin{align*}
	\left \vert \int_{0}^{t} \int_{\R^d}\int_{V}\left[ \int_{V} K^n(S^n) (f^n)' - (K^n)^{\ast}(S^n)f^n\d v \right]\varphi(x,v)\d v \d x \d s \right \vert \leq C_{\varphi, t}.
	\end{align*}	
	
\textbf{Step 7:} \textit{Martingale representation.} Using the martingale representation Lemma \cite[Lemma B.2]{PS2018}, it remains to show that there exists a stochastic basis $(\Omega, \mathcal{F^{\tau^{\ast}}}, \mathbb{P},(\mathcal{F}_t^{\tau^{\ast}})_{t=0}^{\tau^{\ast}}, (\beta^k)_{k \in \N})$ such that for all test functions $\varphi \in C_c^{\infty}(\R^d \times V)$, the process $(M^{\tau^{\ast}}_t(\varphi))_{t=0}^{\tau^{\ast}}$ defined by $M^{\tau^{\ast}}_t(\varphi) \defeq M_{t \land \tau^{\ast}}(\varphi)$ with
\begin{align*}
M_t(\varphi) =	\int_{\R^{2d}} f_t\varphi \d x\d v - \int_{\R^{2d}}f_0\varphi \d x \d v - \int_{0}^{t}\int_{\R^{2d}}f(v \cdot \nabla_x \varphi + \mathcal{L}_{\sigma} \varphi) + g\varphi \d x \d v \d s,
\end{align*}
is a continuous $(\mathcal{F}_t^{\tau^{\ast}})_t$ martingale with quadratic variation 
\begin{align}\label{eq: quadratic_variation_def}
\langle \langle M^{\tau^{\ast}}(\varphi), M^{\tau^{\ast}}(\varphi) \rangle \rangle_t &=\sum_{k \in \N}\int_0^{t\land \tau^{\ast}}\left(\int_{\R^{2d}}f_s\sigma^k \cdot \nabla_v \varphi \d x \d v\right)^2 \d s,
\end{align}
and cross variation
\begin{align}\label{eq: cross_variation_def}
\langle \langle M^{\tau^{\ast}}(\varphi),\beta^k \rangle \rangle_t &= \int_0^{t\land \tau^{\ast}}\int_{\R^{2d}} f_s\sigma^k \cdot \nabla_v \varphi \d x \d v \d s.
\end{align}
The rest of the proof is inspired by the proof of \cite[Lemma 3.3]{PS2018}. 
To define the filtration, we first introduce the spaces $(E_t^{\tau^{\ast}})_{t=0}^{\tau^{\ast}}$ given by $E_t^{\tau^{\ast}} = E_{t\land \tau^{\ast}}$ with $$E_t = ([L^r([0,t],L_x^pL_v^q)]_{w}\cap C([0,t],[L_{x,v}^1]_w)) \times [L^{\tilde{r}'}([0,t],L_x^{\tilde{p}'}L_v^{\tilde{q}'})]_{w}\times C([0,t],\ell^2)
.$$ 
Let $r_t : E_{\tau^{\ast}} \rightarrow E_t^{\tau^{\ast}}$ be the corresponding restriction operators, which restricts to functions defined on the time-interval $[0,\tau^{\ast} \land t ]$. Next, let $(G_t^{\tau^{\ast}})_{t=0}^{\tau^{\ast}}=(\int_{0}^{t\land \tau^{\ast}}g(s)\d s)_{t=0}^{\tau^{\ast}}$ and $((G_t^{\tau^{\ast}})^n)_{t=0}^{\tau^{\ast}} = (\int_{0}^{t \land \tau^{\ast}}g^n(s)\d s)_{t=0}^{\tau^{\ast}} $ represent the running time integrals of $g$ and $g^n$, respectively.
Define the $E_{\tau^{\ast}}$-valued random variables $X=(f,G,\{\beta^k\}_k
)$ and $X^n=(f^n,G^n,\{\beta^{k,n}\}_k
)$. \\
We will verify that $f$ is a weak martingale solution on $[0,  \tau^{\ast}]$ relative to the filtration $(\mathcal{F}^{\tau^{\ast}}_t)_t$, where $\mathcal{F}^{\tau^{\ast}}_t = \sigma(r_tX)$. 
To verify that $M_t^{\tau^{\ast}}$ is a martingale, it suffices to show that for all $s<t$ and  $\gamma \in C_b(E_s,\R)$ we have
\begin{align}
	\mathbb{E} (\gamma(r_sX)(M_t^{\tau^{\ast}}(\varphi)-M_s^{\tau^{\ast}}(\varphi)))	&= 0 \label{eq: martingale}.
\end{align}
More precisely, $f(t), G(t)^{\tau^{\ast}}$ and each Brownian motion $\beta^k(t)$ are $\mathcal{F}_t^{\tau^{\ast}}$-adapted by the construction of $E_t^{\tau^{\ast}}$ and therefore, $M_t^{\tau^{\ast}}$ is $\mathcal{F}_t^{\tau^{\ast}}$-adapted. To show that $M_t^{\tau^{\ast}}$ is an $\mathcal{F}_t^{\tau^{\ast}}$-martingale it remains to show that $\mathbb{E}(M_t^{\tau^{\ast}}|\mathcal{F}_s^{\tau^{\ast}}) = M_s^{\tau^{\ast}}$. This is equivalent to $\mathbb{E}((M_t^{\tau^{\ast}}-M_s^{\tau^{\ast}})\mathbf{1}_{A}) = 0$ for all $A \in \mathcal{F}_s^{\tau^{\ast}}$. The formulation \eqref{eq: martingale} is equivalent to this by approximating step-functions by functions that are continuous and bounded. 
To demonstrate that the quadratic variation is given by \eqref{eq: quadratic_variation_def} by a similar argument it is enough to show
\begin{align}
\mathbb{E} \left(\gamma(r_sX)\left(M_t^{\tau^{\ast}}(\varphi)^2-M_s^{\tau^{\ast}}(\varphi)^2\right)\right)	&= \sum_k\mathbb{E} \left(\gamma(r_sX)\int_{s \land{\tau^{\ast}}}^{t\land{\tau^{\ast}}}\left(\int_{\R^{d}}\int_{\R^d}f\sigma^k \nabla_v\varphi \d v \d x\right)^2\d r\right). \label{eq: quadrativ_variation}
\end{align}
Finally, the cross variation \eqref{eq: cross_variation_def} can be shown by verifying the equivalent formulation
\begin{align}
\mathbb{E} \left(\gamma(r_sX)\left(M_t^{\tau^{\ast}}(\varphi)\beta^k_t-M_s^{\tau^{\ast}}(\varphi)\beta^k_s\right)\right)	&= \mathbb{E} \left(\gamma(r_sX)\int_{s \land{\tau^{\ast}}}^{t\land{\tau^{\ast}}}\left(\int_{\R^{d}}\int_{\R^d}\sigma^k \nabla_v \varphi f \d v \d x\right)\d r\right). \label{eq: cross_variation}
\end{align}
Define $(\mathcal{F}_t^{n,\tau^{\ast}})_t$ by $\mathcal{F}_t^{n,\tau^{\ast}} =\sigma(r_tX^n)$ and the continuous $(\mathcal{F}_t^{n,\tau^{\ast}})$ martingale by $M_t^{n,\tau^{\ast}}(\varphi) = M_{t\land \tau^{\ast}}^n(\varphi) $ with 
\begin{align}\label{eq: martingale_approx}
M_t^n(\varphi) \defeq	\int_{\R^{2d}} f^n(t)\varphi dxdv - \int_{\R^{2d}}f_0^n\varphi dxdv - \int_{0}^{t}\int_{\R^{2d}}f^n(v \cdot \nabla_x \varphi + \mathcal{L}_{\sigma} \varphi) + g^n\varphi dxdvds,
\end{align}
where $f^n$ is a stochastically strong, analytically weak solution which exists due to Lemma \ref{lem: approx_sol_measurable}. \\
This implies, that $f^n$ is also a weak martingale solution on $[0,\tau^{\ast}]$ with respect to the stochastic basis $(\Omega, \mathcal{F}^{n,\tau^{\ast}}, \mathbb{P},(\mathcal{F}^{n,\tau^{\ast}}_t)_t, (\beta^{k,n})_{k \in \N})$. Furthermore, $M_t^{n,\tau^{\ast}}(\varphi)$ is a martingale and thus satisfies 
\begin{align}
\mathbb{E} (\gamma(r_sX^n)(M_t^{n,\tau^{\ast}}(\varphi)-M_s^{n,\tau^{\ast}}(\varphi)))	&= 0 \label{eq: martingale_app}.
\end{align}
Its quadratic variation fulfills
\begin{align}
&\mathbb{E} \left(\gamma(r_sX^n)\left(M_t^{n,\tau^{\ast}}(\varphi)^2-M_s^{n,\tau^{\ast}}(\varphi)^2\right)\right)	\notag\\
&= \sum_k\mathbb{E} \left(\gamma(r_sX^n)\int_{s \land{\tau^{\ast}}}^{t\land{\tau^{\ast}}}\left(\int_{\R^{d}}\int_{\R^d}f^n\sigma^k \nabla_v\varphi \d v \d x\right)^2\d r \right) \label{eq: quadrativ_variation_app}.
\end{align}
And, its cross variation satisfies
\begin{align}
&\mathbb{E} \left(\gamma(r_sX^n)\left(M_t^{n,\tau^{\ast}}(\varphi)\beta^{k,n}_t-M_s^{n,\tau^{\ast}}(\varphi)\beta^{k,n}_s\right)\right)	\notag\\&= \mathbb{E} \left(\gamma(r_sX^n)\int_{s \land{\tau^{\ast}}}^{t\land{\tau^{\ast}}}\left(\int_{\R^{d}}\int_{\R^d}\sigma^k \nabla_v \varphi f^n \d v \d x\right)\d r\right) \label{eq: cross_variation_app}.
\end{align}
It remains to show that these equations converge for $n \rightarrow \infty$.
First, we show that for each $t\in [0,\tau^{\ast}]$ the sequence $(M_t^{n,\tau^{\ast}}(\varphi))$ converges to $M_t^{\tau^{\ast}}(\varphi)$ in $L^2(\Omega)$. This follows from several facts. 
For the convergence of the first term of the right-hand-side of \eqref{eq: martingale_approx} we use that $f^n \rightarrow f$ in $L^2(\Omega,C_t([L_{x,v}^1]_w))$ by Step 5 and dominated convergence. This implies $\int_{\R^{d}}\int_{\R^d} f^n(t)\varphi dxdv \rightarrow \int_{\R^{d}}\int_{\R^d} f(t)\varphi dxdv$ in $L^2(\Omega)$.
For the convergence of the initial value term in \eqref{eq: martingale_approx} we use that $f^n_0 \rightarrow f_0$ in $L_{x,v}^a$ with $\Vert f_0^n\Vert_{L_{x,v}^a} \le \left(1+\frac{1}{n}\right)\Vert f_0\Vert_{L_{x,v}^a}$. Consequently, $\int_{\R^{d}}\int_{\R^d} f^n_0\varphi dxdv \rightarrow \int_{\R^{d}}\int_{\R^d} f_0\varphi dxdv$ in $L^2(\Omega)$. To prove convergence of the third term in \eqref{eq: martingale_approx}, we handle the terms involving $g^n$ and$f^n$ separately. By Step 5 shown above, and dominated convergence, for the term involving $g^n$ we prove convergence of
\begin{align*}
&\int_{0}^{t} \int_{\R^d}\int_{V}\int_{V} g^n\varphi(x,v)\d v \d x \d s \\
&= \int_{0}^{t} \int_{\R^d}\int_{V}\left[ \int_{V} K^n(S^n) (f^n)' - (K^n)^{\ast}(S^n)f^n\d v' \right]\varphi(x,v)\d v \d x \d s \\
&\rightarrow 	 \int_{0}^{t} \int_{\R^d}\int_{V}\left[ \int_{V} K(S) f' - K^{\ast}(S)f \d v' \right]\varphi(x,v)\d v \d x \d s = \int_{0}^{t} \int_{\R^d}\int_{V}g\varphi(x,v)\d v \d x \d s
\end{align*}
in $L^2(\Omega)$.
For the term involving $f^n$ we make use of the convergence of $f^n \rightarrow f$ in $[L_t^r([0,\tau^{\ast}])L_x^pL_v^q]_{w}$ for all $\omega$, the uniform boundedness shown in Lemma \ref{lem:a-priori}, and dominated convergence, to obtain that
\begin{align*}
\int_{0}^{t}\int_{\R^{d}}\int_{\R^d}f^n(v \cdot \nabla_x \varphi + \mathcal{L}_{\sigma} \varphi)\d v \d x \d s \rightarrow \int_{0}^{t}\int_{\R^{d}}\int_{\R^d}f(v \cdot \nabla_x \varphi + \mathcal{L}_{\sigma} \varphi)\d v \d x \d s
\end{align*}
in $L^2(\Omega)$.
Combining these facts implies that $(M_t^n(\varphi))$ converges to $M_t(\varphi)$ in $L^2(\Omega)$. 
Moreover, for each $t$, $\{\gamma(r_tX^n)\}$ converges to $\{\gamma(r_tX)\}$ with probability one, while remaining bounded in $L^{\infty}(\Omega).$ This implies that equation \eqref{eq: martingale_app} implies equation \eqref{eq: martingale}. Consequently, $M_t$ defines a martingale. \\
It remains to show the convergence of \eqref{eq: quadrativ_variation_app} and \eqref{eq: cross_variation_app} to \eqref{eq: quadrativ_variation} and \eqref{eq: cross_variation}, respectively. 
Using the product limit Lemma, see \cite[Lemma B.1]{PS2018}, we obtain the convergence of the left-hand-sides, namely
\begin{align*}
\lim_{n \rightarrow \infty} 	\mathbb{E} \left(\gamma(r_sX^n)\left(M_t^n(\varphi)^2-M_s^n(\varphi)^2\right)\right) = \mathbb{E} \left(\gamma(r_sX)\left(M_t(\varphi)^2-M_s(\varphi)^2\right)\right),
\end{align*}
and 
\begin{align*}
\lim_{n \rightarrow \infty} 	\mathbb{E} \left(\gamma(r_sX^n)\left(M_t^n(\varphi)\beta^{k,n}_t-M_s^n(\varphi)\beta^{k,n}_s\right)\right) = \mathbb{E} \left(\gamma(r_sX)\left(M_t(\varphi)\beta^k_t-M_s(\varphi)\beta^{k}_s\right)\right). 
\end{align*}
Furthermore, using the weak convergence of $f^n$ in $C_t([L_{x,v}^1]_w)$ for all $\omega$ and the product limit Lemma \cite[Lemma B.1]{PS2018}, we have the convergence of the right-hand-side of \eqref{eq: cross_variation_app} to the right-hand-side of \eqref{eq: cross_variation}, more precisely
\begin{align*}
\lim_{n \rightarrow \infty}& \mathbb{E} \left(\gamma(r_sX^n)\int_s^t\left(\int_{\R^{d}}\int_{\R^d}\sigma^k \nabla_v \varphi f^n \d v \d x\right)\d r\right) \\= & \mathbb{E} \left(\gamma(r_sX)\int_s^t\left(\int_{\R^{d}}\int_{\R^d}\sigma^k \nabla_v \varphi f \d v \d x\right)\d r\right). 
\end{align*}
This implies that \eqref{eq: cross_variation} is satisfied.
It remains to show that the right-hand-side of \eqref{eq: quadrativ_variation_app} converges to the right-hand-side of \eqref{eq: quadrativ_variation}.
For each $k$ the convergence is true due to the weak convergence of $f^n$ in $C_t([L_{x,v}^1]_w)$ for all $\omega$ and the product limit lemma as above. Moreover, using the boundedness of $\sigma^k$ we bound the sequence uniformly by
\begin{align*}
&\sum_k\mathbb{E} \left( \gamma(r_sX^n)\int_s^t\left(\int_{\R^{d}}\int_{\R^d}f^n\sigma^k \nabla_v\varphi \d v \d x\right)^2\d r\right) \\
&\leq \norm{\gamma}_{C_b(E_s,\R)}\norm{\nabla_v \varphi}_{L_{x,v}^{\infty}} T 
\mathbb{E}\norm{f^n}^2_{L_t^{\infty}(L_{x,v}^1)}	\sum_k \norm{\sigma^k}_{L_{x,v}^{\infty}}^2. 
\end{align*}
By dominated convergence we hence obtain that \eqref{eq: quadrativ_variation_app} implies \eqref{eq: quadrativ_variation}. Thus, $f$ is a weak martingale solution with respect to the filtration $(\mathcal{F}_t^{\tau^{\ast}})_t$. 
\end{proof}

\section{Acknowledgements}
Funded by the Deutsche Forschungsgemeinschaft (DFG, German Research Foundation), Project-ID 317210226, SFB 1283.

	\bibliographystyle{alpha}
	\bibliography{Literatur}
	
	\begin{flushleft}
		\small \normalfont
		\textsc{Benjamin Gess\\
			Institut f\"ur Mathematik,
			Technische Universit\"at Berlin\\
			10623 Berlin, Germany\\
			and\\
			Max--Planck--Institute for Mathematics in the Sciences\\
			04103 Leipzig, Germany.}\\
		\texttt{\textbf{benjamin.gess@mis.mpg.de}}
	\end{flushleft}
	
	\begin{flushleft}
		\small \normalfont
		\textsc{Sebastian Herr\\
				Fakult\"at f\"ur Mathematik, Universit\"at Bielefeld\\
			33615 Bielefeld, Germany.}\\
		\texttt{\textbf{herr@math.uni-bielefeld.de}}
	\end{flushleft}
	
	\begin{flushleft}
	\small \normalfont
	\textsc{Anne Niesdroy\\
		Fakult\"at f\"ur Mathematik, Universit\"at Bielefeld\\
		33615 Bielefeld, Germany.}\\
	\texttt{\textbf{aniesdroy@math.uni-bielefeld.de}}
\end{flushleft}
	
\end{document}